\numberwithin{equation}{section}
\newcommand{\bfzero}{{\bf 0}}
\newcommand{\OC}{{\mathcal{O C}}}
\newcommand{\Conf}{{\rm C o n f}}
\newcommand{\Coder}{{\rm Coder}}
\newcommand{\id}{{\rm id}}
\renewcommand{\dim}{{\rm d i m}\,}
\newcommand{\hpsi}{\hat{\psi}}
\newcommand{\hG}{\hat{G}}
\newcommand{\End}{{\rm E n d}}
\newcommand{\Spec}{{\rm S p e c}}
\newcommand{\bC}{\overline{C}}
\newcommand{\FME}{{\rm  F M E}}
\newcommand{\Cobar}{{\rm Cobar}}
\newcommand{\Sh}{{\rm Sh}}
\newcommand{\hQ}{\hat{Q}}
\newcommand{\hU}{\hat{U}}
\newcommand{\hF}{\hat{F}}
\newcommand{\hT}{\hat{T}}
\newcommand{\Cbu}{C^{\bullet}}
\newcommand{\Der}{{\rm Der}\,}
\newcommand{\Hom}{{\rm Hom}\,}
\newcommand{\sgn}{{\rm s g n}}
\newcommand{\Lie}{{\bf Lie}}
\newcommand{\cocomm}{{\bf cocomm}}
\newcommand{\coass}{{\bf coassoc}}
\newcommand{\Ger}{{\bf Ger}}
\newcommand{\SC}{{\rm SC}}
\newcommand{\bS}{{\bf S}}
\newcommand{\bH}{{\bf H}}
\renewcommand{\sc}{{\bf sc}}
\newcommand{\bul}{{\bullet}}
\newcommand{\la}{{\lambda}}
\newcommand{\si}{{\sigma}}
\newcommand{\ga}{{\gamma}}
\newcommand{\vf}{{\varphi}}
\newcommand{\ve}{{\varepsilon}}
\newcommand{\vr}{{\varrho}}
\newcommand{\ma}{{\mathfrak{o}}}
\newcommand{\mc}{{\mathfrak{c}}}
\renewcommand{\mp}{{\mathfrak{p}}}
\newcommand{\bs}{{\bf s}}
\newcommand{\pa}{{\partial}}
\newcommand{\cD}{{\cal D}}
\newcommand{\cH}{{\cal H}}
\newcommand{\cL}{{\cal L}}
\newcommand{\cC}{{\cal C}}
\newcommand{\cV}{{\cal V}}
\newcommand{\cW}{{\cal W}}
\newcommand{\cO}{{\cal O}}
\newcommand{\bbQ}{{\mathbb Q}}
\newcommand{\bbC}{{\mathbb C}}
\newcommand{\bbR}{{\mathbb R}}
\newcommand{\bbK}{{\mathbb K}}
\newcommand{\La}{{\Lambda}}
\newcommand{\te}{\theta}
\newcommand{\de}{{\delta}}
\newcommand{\D}{{\Delta}}
\newcommand{\tS}{{\underline{S}}}
\newcommand{\tT}{{\underline{T}}}
\newcommand{\tcL}{{\widetilde{\mathcal L}}}
\newcommand{\tU}{{\widetilde{U}}}
\newcommand{\tcO}{{\widetilde{{\mathcal O}}}}
\newcommand{\brarrow}{\succ\rightarrow}
\date{}
\newtheorem{defi}{Definition}[section]
\newtheorem{teo}[defi]{Theorem}
\newtheorem{cor}[defi]{Corollary}
\newtheorem{pred}[defi]{Proposition}
\newtheorem{example}[defi]{Example}
\newtheorem{remark}[defi]{Remark}
\newtheorem{cond}[defi]{Condition}
\newcommand\qedsymbol{\hbox{$\Box$}}
\newcommand\qed{\relax\ifmmode\Box\else
  {\unskip\nobreak\hfil\penalty50\hskip1em\null\nobreak\hfil\qedsymbol
  \parfillskip=\z@\finalhyphendemerits=0\endgraf}\fi}
\newenvironment{proof}[1][{}]{\par\noindent{\bf Proof}{#1}. }{\qed}
\title{Formality theorem for Hochschild cochains via
transfer}
\author{Vasily Dolgushev}
\begin{document}

\maketitle

\begin{center}
{\it Belatedly to Simon Lyakhovich on the occasion of his 50th birthday.}
\end{center}

\begin{abstract}
We construct a 2-colored operad  $\Ger^+_{\infty}$ which, on the 
one hand, extends the operad
$\Ger_{\infty}$ governing homotopy Gerstenhaber algebras and, on 
the other hand, extends the 2-colored operad governing open-closed homotopy 
algebras (OCHA). We show that Tamarkin's
$\Ger_{\infty}$-structure on the Hochschild cochain complex
$\Cbu(A,A)$ of an $A_{\infty}$-algebra $A$
 extends naturally to a  $\Ger^+_{\infty}$-structure 
on the pair $(\Cbu(A,A), A)$\,.
We show that a formality quasi-isomorphism for the Hochschild
cochains of the polynomial algebra can be obtained via
transfer of this $\Ger^+_{\infty}$-structure to the cohomology
of the pair $(\Cbu(A,A), A)$\,. We show that $\Ger^+_{\infty}$ is 
a sub DG operad of the first sheet $E^1(\SC)$ of the homology spectral sequence 
for the Fulton-MacPherson version $\SC$ of Voronov's 
Swiss Cheese operad. Finally, we prove that the 
DG operads $\Ger^+_{\infty}$ and $E^1(\SC)$ are 
non-formal.  
\end{abstract}

~\\
{\it Keywords:} Operads, formality theorems, homotopy algebras.  \\
{\it 2010 MSC:} 18D50, 55U35.

\tableofcontents

\section{Introduction}
This paper is devoted to a natural $2$-colored extension 
$\Ger^+_{\infty}$ of the operad $\Ger_{\infty}$ governing homotopy
Gerstenhaber algebras. Informally a $\Ger^+_{\infty}$-algebra
is a blend of two ubiquitous algebraic structures: $A_{\infty}$-algebra and
homotopy Gerstenhaber algebra. The notion of $A_{\infty}$-algebra 
was invented by J. Stasheff \cite{Stasheff}. The notion of homotopy Gerstenhaber 
algebra has its origin 
in works \cite{Ger}, \cite{Ezra}, \cite{braces} of M. Gerstenhaber, 
E. Getzler, and T. Kadeishvili. This latter notion was developed 
and formalized in works \cite{BF},  \cite{GerVor}, \cite{GJ}, \cite{KS}, \cite{M-Smith}, 
\cite{Dima-Proof}, \cite{Sasha}.  
In this paper we describe interesting 
relations of the operad $\Ger^+_{\infty}$ to Kontsevich's formality theorem \cite{K}, 
to the operad $\OC_{\infty}$ governing open-closed homotopy algebras \cite{OCHAStasheff}, 
and to Voronov's Swiss Cheese operad \cite{Sasha-SC}.
 
We define the DG operad $\Ger^+_{\infty}$ as the cobar 
construction of a cooperad $\bS$ which is closely related 
to the cohomology cooperad $H^{\bul}(\SC)$ of Voronov's Swiss 
Cheese operad $\SC$. We show that algebras over  $\Ger^+_{\infty}$
are pairs $(V,A)$ with the following data: 
\begin{itemize}
\item A $\Ger_{\infty}$-structure on $V$\,, 
\item An $A_{\infty}$-structure on $A$\,, and
\item an $L_{\infty}$-morphism from $V$ to the Hochschild cochain 
complex $\Cbu(A,A)$ of $A$\,. 
\end{itemize}

Using this description we show that for every $A_{\infty}$-algebra $A$
Tamarkin's $\Ger_{\infty}$-structure on the Hochschild cochain complex 
$\Cbu(A,A)$ extends naturally to a  $\Ger^+_{\infty}$-structure on the 
pair $(\Cbu(A,A), A)$\,.

We show that the DG operad $\Ger^+_{\infty}$ satisfies the 
minimal model condition \cite{Sullivan}. This condition implies that  
the operad $\Ger^+_{\infty}$ is cofibrant and $\Ger^+_{\infty}$-structures 
can be transferred across quasi-isomorphisms. 

Using Tamarkin's rigidity (see Theorem \ref{thm:rigidity}) 
for the Hochschild cohomology $HH^{\bul}(A,A)$ of 
the polynomial algebra
$A =  \bbK[x^1, \dots, x^d]$ we show that a formality quasi-isomorphism 
for Hochschild cochains $\Cbu(A,A)$ coincides with 
the corresponding part of the transferred
$\Ger^+_{\infty}$-structure on the pair $(HH^{\bul}(A,A), A)$\,.

Furthermore, we show that the operad $\Ger^+_{\infty}$
allows us to establish a  
necessary and sufficient condition (Theorem \ref{Teorema})
for an abstract $L_{\infty}$ morphism 
$$
HH^{\bul}(A,A)  \brarrow \Cbu(A,A)
$$
to be homotopic to an $L_{\infty}$ quasi-isomorphism obtained 
via Tamarkin's 
procedure  \cite{DTT}, \cite{Hinich-pro-Dimu}, \cite{Dima-Proof}. 

We also show that $\Ger^+_{\infty}$ is a sub-operad of 
the first sheet $E^1(\SC)$ of the homology spectral sequence for the 
topological Swiss Cheese operad considered by A. Voronov 
in \cite{Sasha-SC}.  Finally we prove that the DG operads 
$\Ger^+_{\infty}$ and  $E^1(\SC)$ are non-formal.  

The paper is organized as follows.

In Section \ref{sect:prelim}  we fix notation and conventions. 
In this section we also  
recall necessary facts about Hochschild cochain complex 
of an $A_{\infty}$-algebra.
In Section \ref{sect:Ger-plus} we introduce the operad $\Ger^+_{\infty}$
and define $\Ger^+_{\infty}$-morphism as well as $\Ger^+_{\infty}$ 
quasi-isomorphisms. In Section \ref{sect:Ger-plus-alg} we give 
an alternative description of $\Ger^+_{\infty}$-algebras and show that 
the operad $\Ger^+_{\infty}$ is an extension of the operad $\OC_{\infty}$
governing open-closed homotopy algebras (OCHA). In this section we also
discuss 
homotopies  of  $\Ger^+_{\infty}$-algebras, prove the 
transfer theorem, and construct a $\Ger^+_{\infty}$-algebra 
structure on the pair $(\Cbu(A,A), A)$\,. 
In Section \ref{sect:via-transfer} we establish a necessary and 
sufficient condition for an abstract $L_{\infty}$-morphism
from $HH^{\bul}(A,A)$ to $\Cbu(A,A)$ to be homotopic to 
the one obtained via Tamarkin's procedure \cite{DTT}, \cite{Hinich-pro-Dimu},
\cite{Dima-Proof}.   In this section 
 we also show that a formality 
quasi-isomorphism for Hochschild cochains $\Cbu(A,A)$ of 
the polynomial algebra $A =  \bbK[x^1, \dots, x^d]$ 
is obtained via transfer of the  $\Ger^+_{\infty}$-algebra on   
 $(\Cbu(A,A), A)$ to the cohomology.
In Section \ref{sect:SC-Ger-plus} we describe the first sheet of the homology 
spectral sequence $E^1(\SC)$ for the Fulton-MacPherson version 
$\SC$ of Voronov's Swiss Cheese operad \cite{Sasha-SC}. 
In this section we show that $\Ger^+_{\infty}$ is a suboperad of 
the DG operad  $E^1(\SC)$\,. 
In Section  \ref{sect:non-formal} we prove that the DG operads 
$\Ger^+_{\infty}$ and $E^1(\SC)$ are non-formal. Finally, in 
the concluding section, we discuss a few open questions.

\subsection*{Acknowledgment}
I was secretly introduced to the fascinating world of homotopy
algebras when I studied quantization of gauge
systems with my advisor Simon Lyakhovich in Tomsk State University.
I would like to thank Simon for his guidance and
greet him with his anniversary.
I would like to thank S. Shadrin, J. Stasheff, B. Vallette, A. Voronov and
T. Willwacher for useful discussions. I am also thankful to J. Stasheff 
for his remarks on the first draft of the paper. 
I would like to thank the anonymous referee for carefully 
reading my manuscript and for many useful suggestions.
A large part of this paper was written when I was 
an assistant professor at the UC Riverside where I 
had excellent working conditions and great 
colleagues. 
I would like to 
thank anonymous taxpayers of California for supporting the 
UC system and my work at UC Riverside.
The results of this paper were presented at the Southern California 
Algebra Conference in UCLA and 
at the Geometry and Physics 
seminar at Northwestern University.  I would like to thank
participants of the conference and the seminar for questions and
useful comments. 
I am partially supported by the NSF grant
DMS 0856196, Regent's Faculty Fellowship, and the Grant for Support
of Scientific Schools NSh-3036.2008.2.

\section{Preliminaries}
\label{sect:prelim}

\subsection{Notation and conventions}
The ground field $\bbK$ has the zero characteristic.
Depending on a context our underlying
symmetric monoidal category is either
the category of graded vector spaces, or
the category of cochain complexes, or the category
of compactly generated topological spaces.
By suspension $\bs V$ of a graded vector space
(or a chain complex) $V$
we mean $\ve \otimes V$, where $\ve$ is a
one-dimensional vector space placed in degree
$+1$\,. For a vector $v\in V$ we denote by $|v|$
its degree. 
$S_n$ denotes the symmetric group on $n$ letters and
$\Sh_{p,q}$ denotes the subset of $(p,q)$-shuffles in
$S_{p+q}$.

For a graded vector space (or a cochain complex) $V$
the notation $S(V)$ (resp. $\tS(V)$) is reserved for the
symmetric algebra (resp. truncated symmetric algebra)
of $V$: 
$$
S(V) = \bbK \oplus V \oplus S^2(V) \oplus S^3(V) \oplus \dots\,, 
$$ 
$$
\tS(V) =  V \oplus S^2(V) \oplus S^3(V) \oplus \dots\,.
$$
Similarly, $T(V)$ (resp. $\tT(V)$) denotes the
tensor algebra (resp. truncated tensor algebra) of $V$:
$$
T(V) = \bbK \oplus V \oplus V^{\otimes  2}  \oplus  V^{\otimes  3} \oplus \dots\,, 
$$
$$
\tT(V) =  V \oplus V^{\otimes  2}  \oplus  V^{\otimes  3}  \oplus \dots\,. 
$$

For an operad $\cO$ (resp. a cooperad $\cC$) and 
a cochain complex $V$ 
we denote by $\cO(V)$ (resp. $\cC(V)$) the free 
$\cO$-algebra generated by $V$ (resp. the cofree 
$\cC$-coalgebra cogenerated by $V$)\,. The notation 
$\Cobar(\cC)$ is reserved for the cobar construction \cite{Fresse}, \cite{GJ} of 
a cooperad $\cC$\,.
 
By ``suspension'' of a (co)operad $\cO$
we mean the (co)operad $\La(\cO)$
whose $m$-th space is
\begin{equation}
\label{susp-op}
\La(\cO)(m) = \bs^{1-m} \cO(m) \otimes \sgn_{m}\,,
\end{equation}
where $\sgn_{m}$ is the sign representation of
the symmetric group $S_m$\,. For example, a $\La\Lie$-algebra
is nothing but a Lie algebra with the bracket of degree $-1$\,.
Similarly $\La\coass$ is the cooperad governing coassociative coalgebras 
with comultiplication of degree $-1$\,.

It is not hard to see that  
for every (co)operad $\cO$ and for every graded
vector space $V$ we have the isomorphism
\begin{equation}
\label{La-O-V}
\La(\cO) (V) = \bs\, \cO(\bs^{-1} V)\,.
\end{equation}

It is important to make a remark about degrees 
of cooperations in coalgebras over cooperads. 
If $\cC$ is a cooperad and $v$ is a homogeneous vector in $\cC(m)$
then the corresponding cooperation
\begin{equation}
\label{map}
A \to A^{\otimes \, m}  
\end{equation}
on a $\cC$-coalgebra $A$ has the degree $-|v|$\,.
In this paper by degree of a cooperation we mean
the degree of the corresponding vector in the cooperad 
but not the degree of the map. For example, a vector 
$\D\in \La\coass(2)$ has the degree $-1$ but the corresponding 
map 
$$
\D : A  \to  A \otimes A
$$    
for a $\La\coass$-coalgebra $A$ has the degree $1$\,.  

In this paper we often encounter $L_{\infty}$-algebras (resp. (DG) Lie algebras) 
with the binary operation having degree $-1$\,. Following our notation we 
call them $\La\Lie_{\infty}$-algebras (resp. $\La\Lie$-algebras). 
However, we still retain the word ``$L_{\infty}$-morphism'' for 
a morphism in the category of  $\La\Lie_{\infty}$-algebras.

Thus a $\La\Lie_{\infty}$-structure on a graded vector space $\cL$ 
is a degree $1$ codifferential $\hQ_{\cL}$ on the cofree $\La^2\cocomm$ 
coalgebra
$$
\La^2\cocomm (\cL)  = \bs^2 S (\bs^{-2} \cL)\,.
$$
Similarly, an $L_{\infty}$-morphism $\hU$ from a $\La\Lie_{\infty}$-algebra
$\cL$ to  a $\La\Lie_{\infty}$-algebra $\tcL$ is a morphism of 
$\La^2\cocomm$-coalgebras
$$
\hU :  \bs^2 S (\bs^{-2} \cL)  \to  \bs^2 S (\bs^{-2} \tcL) 
$$
compatible with the codifferentials. 

Since $\hU$ is a morphism to a cofree coalgebra, it is uniquely 
determined by its composition 
$$
U = p \circ \hU :   \bs^2 S (\bs^{-2} \cL) \to \tcL
$$
with the natural projection 
$$
p:   \bs^2 S (\bs^{-2} \tcL) \to \tcL\,.
$$   

The notation $\Ger$ is reserved for the operad governing Gerstenhaber 
algebras \cite{Ger}, \cite{GJ} and $\Ger^{\vee}$ denotes the corresponding 
Koszul dual \cite{Fresse}, \cite{GJ}, \cite{GK} cooperad. This cooperad 
can be obtained from the linear dual $\Ger^*$ of $\Ger$ by 
applying the operation $\La$ twice: 
$$
\Ger^{\vee} = \La^2 \Ger^*\,.
$$
Following  \cite{Fresse}, \cite{GJ}, and \cite{GK} the DG operad 
$\Cobar(\Ger^{\vee})$ can be chosen as a model for the operad 
which governs homotopy Gerstenhaber algebras. So, in this paper, 
we set 
$$
\Ger_{\infty} = \Cobar(\Ger^{\vee})\,.
$$

For colored (co)operads we use notation and conventions 
from paper \cite{BM-colors} by C. Berger and I. Moerdijk. 
All colored (co)operads in this paper are $2$-colored and 
we denote the two colors by $\mc$ and $\ma$\,. 
Furthermore, we fix the order $\mc < \ma$ on the set $\{\mc, \ma\}$\,.
Hence, due to \cite[Remark 1.3]{BM-colors}, every $2$-colored 
(co)operad $\cO$ is  completely determined 
by objects 
$$
\cO(\underbrace{\mc, \mc, \dots, \mc}_{k},  \underbrace{\ma, \ma, \dots, \ma}_{n}  ;  \mc)\,,
$$
$$
\cO(\underbrace{\mc, \mc, \dots, \mc}_{k},  \underbrace{\ma, \ma, \dots, \ma}_{n}  ;  \ma)
$$ 
together with the right action of the group $S_k \times S_n$
and by the corresponding (co)operadic (co)multiplications 
for these objects. 

We often use the following short-hand notation for 
objects of 2-colored (co)operads
\begin{equation}
\label{notation-mc-ma}
\cO^{\mc}(n,k) : = \cO(\underbrace{\mc, \mc, \dots, \mc}_{n}, 
\underbrace{\ma, \ma, \dots, \ma}_{k}  ;  \mc)\,,
\end{equation}
\begin{equation}
\label{notation-mc-ma1}
\cO^{\ma}(n,k) : = \cO(\underbrace{\mc, \mc, \dots, \mc}_{n}, 
\underbrace{\ma, \ma, \dots, \ma}_{k}  ;  \ma)\,.
\end{equation}
Let us consider, for example, the operad $\Lie$ whose 
algebras are Lie algebras (in the category of vector spaces).
This operad can be upgraded 
to the $2$-colored operad $\Lie^+$ whose algebras are 
pairs $(\cV, \cW)$ where $\cV$ is a Lie algebra and $\cW$ is a module 
over $\cW$\,.
To vectors in $\cV$ (resp, $\cW$) we assign the color $\mc$
(resp. $\ma$). It is not hard to see that for $\Lie^+$ we have 
$$
(\Lie^+)^{\mc}(n,0)= \Lie (n)\,, 
$$
$$
(\Lie^+)^{\ma}(n,1)= \bbK[S_n]\,, 
$$
and the remaining vector spaces are $\bfzero$\,.

\begin{remark}
\label{rem:cofibrant}
For colored operads of cochain 
complexes we use a bit of terminology from closed 
model categories. 
Thus, following \cite{BM} and \cite{BM-colors}, we 
call a map $f: \cO \to  \tcO$ of colored operads
a {\it fibration} if for every collection of colors 
$c_1, \dots, c_k, c$ the map 
$$
f : \cO(c_1, \dots, c_k ; c) \to  \tcO(c_1, \dots, c_k ; c) 
$$
is surjective. We say that  $f: \cO \to  \tcO$
is an {\it acyclic fibration} is $f$ is a quasi-isomorphism and 
a fibration. A map $f: \cO \to  \tcO$ is called 
a {\it cofibration} if $f$ satisfies the left lifting property with 
respect to all acyclic fibrations. Finally, a colored 
operad $\cO$ is called {\it cofibrant} if the unique map 
$\ast \to  \cO$ from the initial object $\ast$ is a cofibration. 

Unfortunately, to the best of our knowledge, it is not yet 
established whether the category of all colored operads 
with the above (co)fibrations is a closed model category. 
For this reason, in our paper, we use the above terminology but 
not the full power of closed model categories.
\end{remark}

We use the reversed grading
on each homological complex. In particular, the homology 
groups $H_{\bul}(X)$ of a topological space $X$ are 
concentrated in non-positive degrees and the 
Poincar\'e duality takes this unusual form:  
\begin{equation}
\label{PD}
H_{\bul}(X) \cong H^{\dim X + \bul}(X)\,.
\end{equation}
Similarly, for the homology spectral sequence associated to 
an increasing filtration $F^{p-1}X \subset F^p X \subset \dots $
on a space $X$ we have
\begin{equation}
\label{E1}
E^1_{p,q}(X) = H_{p+q} (F^{-p} X , F^{-p-1} X)
\end{equation}
and
\begin{equation}
\label{d1}
d_1 :  E^1_{p,q}(X)  \to E^1_{p+1,q}(X)\,.
\end{equation}

\subsection{Hochschild cochain complex of an $A_{\infty}$-algebra}

It is known that for every graded vector space $A$
\begin{equation}
\label{Hom-A-A}
\bigoplus_{k \ge 0} \bs^k\, \Hom(A^{\otimes \, k}, A)
\end{equation}
is equipped with a $\La\Lie$-algebra structure.
The corresponding
bracket is the Gerstenhaber bracket $[\,,\,]_G$\,. It is
given by the formula
$$
[Q_1, Q_2]_{G} =
$$
\begin{equation}
\label{Gerst}
\sum_{i=1}^{k_1}(-1)^{\ve_{i, k_2}}
Q_1(a_1,\,\dots , Q_2 (a_i,\, \dots, a_{i+k_2-1}),\, \dots,
a_{k_1+k_2-1}) -
(-1)^{(k_1 + 1)(k_2 + 1)} (1 \leftrightarrow 2)\,,
\end{equation}
where $k_1$ (resp. $k_2$) is the degree of $Q_1$ (resp. $Q_2$)
in (\ref{Hom-A-A}) and
$$
\ve_{i,k_2} = (k_2+1)(|a_1| + \dots + |a_{i-1}| + i -1 )\,.
$$

Let us recall that an $A_{\infty}$-algebra
structure on $A$ is an element
\begin{equation}
\label{flat}
m \in \bigoplus_{k \ge 1} \bs^k\, \Hom(A^{\otimes \, k}, A)
\end{equation}
satisfying the Maurer-Cartan equation:
\begin{equation}
\label{MC}
[m,m]_{G} = 0\,.
\end{equation}

Equation (\ref{MC}) implies that  the formula
\begin{equation}
\label{Hoch}
\pa^{Hoch} = [m, ~~ ]_{G}
\end{equation}
defines a differential on the $\La\Lie$-algebra (\ref{Hom-A-A})\,. 
In particular, restricting $m$ to $A$ we get a differential on $A$\,. 

The cochain complex (\ref{Hom-A-A}) with the differential 
(\ref{Hoch}) is called the Hochschild
cochain complex of the $A_{\infty}$-algebra
$(A,m)$\,. In this paper we denote this cochain
complex by $\Cbu(A,A)$\,.

\section{The operad $\Ger^+_{\infty}$}
\label{sect:Ger-plus}
Let us start the construction of the operad  $\Ger^+_{\infty}$
with the following definition
\begin{defi}
\label{dfn:S-coalg}
An $\bS$-coalgebra is a pair of $\Ger^{\vee}$-coalgebra
$V$ and a $\La\coass$-coalgebra $A$ together with a unary
operation
$$
\rho : A \to V
$$
of degree\footnote{Recall that by degree of $\rho$ we mean the degree 
of the corresponding vector in the cooperad governing $\bS$-coalgebras.} 
$-1$ such that the composition of
the cobracket $\de_{\mc}$ with $\rho$ is zero:
\begin{equation}
\label{de-rho}
\de_{\mc} \circ  \rho (a) = 0\,,
\end{equation}
and the following diagrams commute:
\begin{equation}
\label{rho-map-coalg}
\xymatrix@M=0.4pc{
A\ar[r]^{\rho} \ar[d]_{\D_{\ma}}  &  V \ar[d]^{\D_{\mc}}  \\
A\otimes A \ar[r]^{\rho \otimes \rho} & V \otimes V
}
\end{equation}
\begin{equation}
\label{left-right-via-rho}
\xymatrix@M=0.4pc{
~ & A \otimes A \ar[r]^{\rho \otimes \id} & V\otimes A\\
A \ar[ur]^{\D_{\ma}} \ar[dr]_{\D_{\ma}} & ~ & ~ \\
~ & A \otimes A \ar[r]^{\id \otimes \rho} & A\otimes V \ar[uu]^{\si}
}
\end{equation}
where $\D_{\mc}$ (resp. $\D_{\ma}$) is the comultiplication in $V$
(resp. in $A$) and
$$
\si: A \otimes V \to V \otimes A
$$
is the mapping which switches
tensor components.
\end{defi}
\begin{remark}
 Diagram (\ref{rho-map-coalg}) says that
$\rho$ is a map of coassociative coalgebras. However,
the degrees of comultiplications $\D_{\mc}$ and $\D_{\ma}$ are different.
This is why $\rho$ is forced to have degree $-1$.
\end{remark}

Composing the comultiplication $\D_{\ma}$ with
$\rho \otimes \id$ and $\id \otimes \rho$ we get the
maps
\begin{equation}
\label{mu-left}
\mu_l = \rho \otimes \id \circ \D_{\ma} : A \to V\otimes A\,,
\end{equation}
and
\begin{equation}
\label{mu-right}
\mu_r = \id \otimes \rho \circ \D_{\ma} : A \to A \otimes V\,,
\end{equation}
respectively.

Using the axioms of $\bS$-coalgebras it is not hard to
show that
\begin{pred}
\label{prop:comodules}
If we view $V$ as a coassociative coalgebra then
the maps  $\mu_l$ (\ref{mu-left}) and $\mu_r$ (\ref{mu-right})
are left and right comodule structures on $A$ over $V$,
respectively.
\end{pred}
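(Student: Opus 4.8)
The plan is to verify directly that $\mu_l$ and $\mu_r$ satisfy the counit-free comodule axioms, using only the three defining conditions of an $\bS$-coalgebra (namely \eqref{de-rho}, \eqref{rho-map-coalg}, \eqref{left-right-via-rho}) together with the fact that $\rho$ is a chain map of $\La\coass$-coalgebras. Recall that for $V$ viewed as a coassociative coalgebra with comultiplication $\D_{\mc}$, a left comodule structure $\mu_l : A \to V \otimes A$ must satisfy the coassociativity-type identity
\begin{equation}
(\D_{\mc} \otimes \id) \circ \mu_l = (\id \otimes \mu_l) \circ \mu_l
\end{equation}
and similarly $\mu_r$ must satisfy $(\id \otimes \D_{\mc}) \circ \mu_r = (\mu_r \otimes \id) \circ \mu_r$, while the bimodule-compatibility between $\mu_l$ and $\mu_r$ reads $(\id \otimes \mu_r) \circ \mu_l = (\mu_l \otimes \id) \circ \mu_r$. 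Since we are working without counits in the conilpotent setting, these are the only conditions to check.

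First I would prove the left-comodule axiom for $\mu_l = (\rho \otimes \id) \circ \D_{\ma}$. Expanding $(\id \otimes \mu_l)\circ \mu_l = (\id \otimes \rho \otimes \id)\circ(\id \otimes \D_{\ma})\circ(\rho \otimes \id)\circ \D_{\ma}$, I would move $\rho$ past the first tensor slot and use coassociativity of $\D_{\ma}$ to rewrite this as $(\rho \otimes \rho \otimes \id)\circ(\D_{\ma} \otimes \id)\circ \D_{\ma}$. On the other side, $(\D_{\mc} \otimes \id)\circ \mu_l = (\D_{\mc} \otimes \id)\circ(\rho \otimes \id)\circ \D_{\ma} = (\D_{\mc}\circ\rho \otimes \id)\circ \D_{\ma}$, and now diagram \eqref{rho-map-coalg} lets me replace $\D_{\mc}\circ\rho$ by $(\rho\otimes\rho)\circ\D_{\ma}$, giving again $(\rho\otimes\rho\otimes\id)\circ(\D_{\ma}\otimes\id)\circ\D_{\ma}$. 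So the two sides agree. The right-comodule axiom for $\mu_r$ is entirely symmetric, again combining coassociativity of $\D_{\ma}$ with \eqref{rho-map-coalg}. The bimodule-compatibility identity relating $\mu_l$ and $\mu_r$ is where diagram \eqref{left-right-via-rho} enters: both $(\id \otimes \mu_r)\circ\mu_l$ and $(\mu_l \otimes \id)\circ\mu_r$ reduce, via coassociativity of $\D_{\ma}$, to $(\rho \otimes \id \otimes \rho)$ applied to the iterated coproduct $(\D_{\ma}\otimes\id)\circ\D_{\ma} = (\id\otimes\D_{\ma})\circ\D_{\ma}$, and \eqref{left-right-via-rho} (together with its instances after one more coproduct) ensures the middle $A$-factor is placed consistently and the switch map $\si$ is accounted for correctly.

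The main thing to be careful about — and the only genuine obstacle — is bookkeeping of Koszul signs, since $\D_{\mc}$ and $\D_{\ma}$ carry different degrees ($\D_{\mc}$ of degree $0$ as a cooperation on a $\Ger^{\vee}$-coalgebra after the $\La$-shifts, versus $\D_{\ma}$ of degree $-1$, with $\rho$ of degree $-1$ mediating between them) and $\mu_l, \mu_r$ are themselves of degree $-1$. One must check that the degree shifts on both sides of each identity match and that the sign picked up when commuting $\rho$ past a tensor factor is the same sign that appears when expanding the composite on the other side; the consistency of \eqref{rho-map-coalg} as a commuting diagram of graded maps is exactly what guarantees this, and the same for \eqref{left-right-via-rho} once the switch $\si$ contributes its sign. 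I do not expect any conceptual difficulty beyond this — the proposition is essentially a diagram chase repackaging the $\bS$-coalgebra axioms into the standard comodule axioms — so I would present the argument compactly, displaying the three identities to be verified and the one- or two-line reduction of each using \eqref{rho-map-coalg}, \eqref{left-right-via-rho}, and coassociativity of $\D_{\ma}$, leaving the routine sign verification to the reader.
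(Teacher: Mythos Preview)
Your argument for the left- and right-comodule axioms is correct and is exactly the paper's approach: the paper packages the same two ingredients---coassociativity of $\D_{\ma}$ and axiom \eqref{rho-map-coalg}---into a single commutative diagram, then declares the case of $\mu_r$ analogous. Two small points: the proposition does not assert any bimodule compatibility between $\mu_l$ and $\mu_r$, so that part of your plan (and hence any appeal to \eqref{left-right-via-rho}) is unnecessary here; and your aside about the degree of $\D_{\mc}$ is off (as a vector in $\Ger^{\vee}(2)$ it has degree $-2$, not $0$), though this does not affect the diagram chase.
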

\begin{remark}
Axiom (\ref{left-right-via-rho})
 implies that the comodule
structure $\mu_r$ is obtained from $\mu_l$ via
switching the tensor factors.
\end{remark}
\begin{proof}
To prove that $\mu_l$ is a left comodule
structure we need to show that the diagram
\begin{equation}
\label{mu-l-comodule}
\xymatrix@M=0.4pc{
A \ar[d]_{\mu_l} \ar[r]^{\mu_l}  &   V \otimes A \ar[d]^{\D_{\mc} \otimes \id} \\
V \otimes A \ar[r]^-{\id \otimes \mu_l} & V \otimes V \otimes A
}
\end{equation}
commutes.

The latter follows easily from the
commutativity of the diagram below:
\begin{equation}
\label{mu-l-comodule-proof}
\xymatrix@M=0.6pc{
A \ar[d]_{\D_{\ma}} \ar[r]^{\D_{\ma}} &  A\otimes A \ar[r]^{\rho\otimes 1}
\ar[d]^{\D_{\ma} \otimes 1}  &       V \otimes A\ar[d]^{\D_{\mc} \otimes 1} \\
A\otimes A  \ar[r]^-{1\otimes \D_{\ma}} \ar[d]_{\rho \otimes 1}  &
A \otimes A \otimes A \ar[r]^{\rho \otimes \rho \otimes 1}
\ar[d]_{\rho\otimes 1 \otimes 1} &
V \otimes V \otimes A \\
V \otimes A \ar[r]^-{1 \otimes \D_{\ma}} &
V \otimes A  \otimes A \ar[ur]_{1 \otimes \rho \otimes 1} &  ~
}
\end{equation}

The corresponding statement for $\mu_r$ can be proved
in a similar way.

\end{proof}

Let us denote by $\bS$ the cooperad which governs
the $\bS$-coalgebras. This is a 2-colored cooperad.
We reserve the color $\mc$ for vectors in $V$ and the
color\footnote{This notation for colors comes from string theory: $\mc$ stands for {\bf c}losed
strings and $\ma$ stands for {\bf o}pen strings.} 
$\ma$ for vectors in $A$\,. In Subsection \ref{subsect:S-versus-sc}
we describe a link between the cooperad $\bS$ and the cohomology 
cooperad $H^{\bul}(\SC)$ of Voronov's Swiss Cheese operad $\SC$\,.

The cofree $\bS$-coalgebra $\bS(V,A)$ cogenerated by the
pair $(V,A)$ is the direct sum
\begin{equation}
\label{bS-V-A}
\bS(V, A) = \bS^{\mc}(V, A) \,\oplus\,
\bS^{\ma}(V, A)
\end{equation}
where
\begin{equation}
\label{bS-V-A-mc}
\bS^{\mc}(V, A) = \Ger^{\vee}(V)
\end{equation}
and
\begin{equation}
\label{bS-V-A-ma}
\bS^{\ma}(V, A) =
\bs\,  \big( \tS(\bs^{-2} V) \otimes   T(\bs^{-1}A)  \big)
\, \oplus  \, \bs \, \tT(\bs^{-1} A)\,,
\end{equation}
where $\tT(\bs^{-1}A)$  (resp. $\tS(\bs^{-2} V)$) is
the truncated tensor algebra of $\bs^{-1} A$
(resp. the truncated symmetric algebra of $\bs^{-2} V$).

The main hero of this article is the DG operad
$\Cobar(\bS)$\,. Since $\Cobar(\bS)$ extends the
operad governing homotopy Gerstenhaber algebras we denote
it by $\Ger^{+}_{\infty}$\,.

Let us recall that, due to Proposition 2.15 in \cite{GJ},
a $\Ger^{+}_{\infty}$-algebra structure on $(V,A)$ is
a degree $1$ codifferential $\hQ$ of the cofree coalgebra
$\bS(V,A)$\,.

This observation motivates
the definition of a $\Ger_{\infty}^+$-morphism.
\begin{defi}
\label{dfn:Ger-plus-morph}
Let $(V,A)$ and $(V',A')$ be two $\Ger_{\infty}^+$-algebras.
A {\em $\Ger_{\infty}^+$-morphism} from $(V,A)$ to $(V',A')$
is a morphism of $\bS$-coalgebras
$$
\hT: \bS(V,A) \to \bS(V',A')
$$
with the codifferentials $\hQ$ and $\hQ'$ corresponding
to the $\Ger^+_{\infty}$-structures on $(V,A)$
and $(V', A')$, respectively.
\end{defi}

Since $\bS(V',A')$ is cofree every morphism
$$
\hT: \bS(V,A) \to \bS(V',A')
$$
of $\bS$-coalgebras is uniquely determined by
its composition
$$
T = p \circ \hT : \bS(V,A) \to V' \oplus A'
$$
with the projection $p:  \bS(V',A') \to  V' \oplus A'$\,.

It is not hard to see that the compatibility
of $\hT$ with the codifferentials $\hQ$ and $\hQ'$ implies that
the restrictions:
\begin{equation}
\label{T-1-V}
T \Big|_{V} : V \to V'
\end{equation}
and
\begin{equation}
\label{T-1-A}
T \Big|_{A} : A \to A'
\end{equation}
are morphisms of cochain complexes.

This observation motivates the definition of
$\Ger_{\infty}^+$ quasi-isomorphism
\begin{defi}
\label{dfn:Ger-plus-q-iso}
A {\em $\Ger_{\infty}^+$ quasi-morphism} from $(V,A)$ to $(V',A')$
is a morphism of $\bS$-coalgebras
$$
\hT: \bS(V,A) \to \bS(V',A')
$$
for which the restrictions (\ref{T-1-V}), (\ref{T-1-A})
are quasi-isomorphisms of cochain complexes.
\end{defi}

\section{Algebras over the operad $\Ger^{+}_{\infty}$}
\label{sect:Ger-plus-alg}
The following theorem gives us an alternative 
description\footnote{I would like to thank Thomas Willwacher who 
suggested to me this alternative description.}
 of algebras over the operad $\Ger^{+}_{\infty}$\,. 
\begin{teo}
\label{thm:Ger-plus-alg}
A $\Ger^{+}_{\infty}$-algebra structure on the pair
of cochain complexes $(V,A)$ is a triple:
\begin{itemize}

\item A $\Ger_{\infty}$-structure on $V$\,,

\item An $A_{\infty}$-structure on $A$\,, and

\item An $L_{\infty}$-morphism from $V$ to the Hochschild
cochain complex $\Cbu(A,A)$ of the $A_{\infty}$-algebra $A$\,.

\end{itemize}

\end{teo}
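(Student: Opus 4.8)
The plan is to unravel the definition of a codifferential $\hQ$ on the cofree $\bS$-coalgebra $\bS(V,A)$ and match its components, piece by piece, against the three structures in the statement. Recall from \eqref{bS-V-A}–\eqref{bS-V-A-ma} that $\bS(V,A)$ splits into the $\mc$-colored part $\Ger^{\vee}(V)$ and the $\ma$-colored part $\bs\big(\tS(\bs^{-2}V)\otimes T(\bs^{-1}A)\big)\oplus\bs\,\tT(\bs^{-1}A)$. A degree $1$ codifferential $\hQ$ is determined by its composition $Q=p\circ\hQ$ with the projection onto cogenerators $V\oplus A$, and the condition $\hQ^2=0$ becomes a single Maurer--Cartan-type equation. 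First I would observe that $\hQ$ preserves the coalgebra structure, hence respects the sub-cooperad structure in such a way that the restriction of $\hQ$ to $\Ger^{\vee}(V)$ is itself a codifferential: this is precisely a $\Ger_{\infty}$-structure on $V$, since $\Ger_{\infty}=\Cobar(\Ger^{\vee})$ by definition. Next, the summand $\bs\,\tT(\bs^{-1}A)$ is the cofree $\La\coass$-coalgebra on $A$ (up to the appropriate shift), and the restriction of $\hQ$ there is a codifferential on it; by the standard bar/cobar dictionary (Proposition 2.15 of \cite{GJ}) this is an $A_{\infty}$-structure $m$ on $A$, equivalently a solution of the Maurer--Cartan equation \eqref{MC} for the Gerstenhaber bracket.

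The substantive content is the mixed summand $\bs\big(\tS(\bs^{-2}V)\otimes T(\bs^{-1}A)\big)$. Here I would use the identification $\La^2\cocomm(V)=\bs^2 S(\bs^{-2}V)$ recalled in the preliminaries, together with the fact — this is where the axioms of an $\bS$-coalgebra and Proposition \ref{prop:comodules} enter — that the tensor-algebra factor $T(\bs^{-1}A)$ over a fixed symmetric word in $\bs^{-2}V$ is exactly the space on which Hochschild cochains $\Hom(A^{\otimes k},A)$ act. Concretely, a component of $Q$ landing in $A$ whose source lies in $\tS(\bs^{-2}V)\otimes T(\bs^{-1}A)$, with $n$ inputs from $V$ and $k$ inputs from $A$, is the same datum as a multilinear map $S^n(\bs^{-2}V)\to\bs^k\Hom(A^{\otimes k},A)$, i.e. a Taylor coefficient $U_n$ of a map $V\to\Cbu(A,A)$. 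I would then show that the part of the equation $\hQ^2=0$ involving this summand is exactly the equation saying that $U=(U_n)$ is an $L_{\infty}$-morphism from $(V,\text{its }\Ger_{\infty}/L_{\infty}\text{ part})$ to the $\La\Lie_{\infty}$-algebra $\Cbu(A,A)$ — the Hochschild differential $\pa^{Hoch}=[m,\ \ ]_G$ appearing on the target being produced precisely by the cross-terms between the $A_{\infty}$-component $m$ and the mixed component of $\hQ$, and the $L_{\infty}$-bracket on $V$ coming from the cobracket $\de_{\mc}$ on $\Ger^{\vee}(V)$ together with condition \eqref{de-rho}. Conversely, given the three structures, I would assemble a codifferential from $m$, the $\Ger_{\infty}$-structure, and the components $U_n$, and check the identities reverse.

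The main obstacle I anticipate is bookkeeping: getting the signs, the suspension shifts $\bs$, $\bs^{-1}$, $\bs^{-2}$, and the symmetrization/desymmetrization factors to line up so that the ``mixed'' part of $\hQ^2=0$ literally reproduces the standard $L_{\infty}$-morphism relations (with the degree conventions of this paper, where the $\La\Lie_{\infty}$-bracket has degree $-1$). A secondary, more conceptual, point to verify carefully is that the co-operad $\bS$ really does decompose so that no further structure is hidden — i.e. that the three listed items are not only necessary but exhaust $\hQ$; this comes down to checking that \eqref{bS-V-A-mc}–\eqref{bS-V-A-ma} give all of $\bS(V,A)$ and that the $\bS$-coalgebra axioms of Definition \ref{dfn:S-coalg} impose nothing beyond compatibility with the three structures, which is where Proposition \ref{prop:comodules} and the two commuting diagrams \eqref{rho-map-coalg}, \eqref{left-right-via-rho} do the work. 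I would phrase the final argument as a chain of natural bijections between the relevant sets of codifferentials/morphisms rather than a brute-force computation, deferring the explicit sign verification to a remark or to the reader.
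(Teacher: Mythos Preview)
Your proposal is correct and follows essentially the same route as the paper: split $\hQ$ into its $\mc$- and $\ma$-components, read off the $\Ger_{\infty}$- and $A_{\infty}$-structures from the pure summands, and identify the mixed component of $Q^{\ma}$ with the Taylor coefficients of an $L_{\infty}$-morphism $V\to\Cbu(A,A)$ by checking that $\hQ^{\ma}\circ\hQ^{\ma}=0$ reproduces the $L_{\infty}$-morphism relations. Two small remarks: the paper does carry out the sign/shuffle bookkeeping explicitly (formulas (\ref{Q-hQ})--(\ref{Q-nilpotent-apply})) rather than deferring it, and Proposition~\ref{prop:comodules} plays no role in the argument --- what matters is the explicit form (\ref{bS-V-A-ma}) of $\bS^{\ma}(V,A)$, which is where the axiom $\de_{\mc}\circ\rho=0$ is already absorbed.
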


\begin{proof}
A $\Ger^{+}_{\infty}$-algebra structure on $(V,A)$
is a degree $1$ codifferential $\hQ$
on the cofree coalgebra $\bS(V,A)$\,.
This differential splits into the $\mc$-component $\hQ^{\mc}$
\begin{equation}
\label{hQ-mc}
\hQ^{\mc} : \Ger^{\vee}(V) \to \Ger^{\vee}(V)
\end{equation}
and the $\ma$-component
\begin{equation}
\label{hQ-ma}
\hQ^{\ma} : \bS^{\ma}(V,A) \to \bS^{\ma}(V,A)\,.
\end{equation}

Since $\bS(V,A)$
is cofree, the codifferential $\hQ$ is uniquely determined
by its composition
\begin{equation}
\label{Q}
Q = p \circ \hQ : \bS(V,A) \to V \oplus A
\end{equation}
with the projection
$$
p : \bS(V,A) \to V \oplus A\,.
$$
The composition $Q$ also splits into the
$\mc$-component
\begin{equation}
\label{Q-mc}
Q^{\mc} : \Ger^{\vee}(V) \to V
\end{equation}
and the $\ma$-component
\begin{equation}
\label{Q-ma}
Q^{\ma} : \bs \, \big(\, \tS(\bs^{-2} V) \otimes   T(\bs^{-1}A) \, \big)
\,\oplus \, \bs \, \tT(\bs^{-1} A)
\to A\,.
\end{equation}

We claim that $Q^{\mc}$ (\ref{Q-mc}) gives us
a $\Ger_{\infty}$-algebra structure on $V$,
the restriction
\begin{equation}
\label{Q-ma-restr}
Q^{\ma} \Big|_{ \bs \tT(\bs^{-1}A) } :  \bs \tT(\bs^{-1}A) \to A
\end{equation}
gives us an $A_{\infty}$-structure on $A$ and
the restriction
\begin{equation}
\label{Q-ma-restr1}
Q^{\ma} \Big|_{ \bs \, \big(\, \tS(\bs^{-2} V) \otimes   T(\bs^{-1}A) \, \big) }
: \bs \, \big(\, \tS(\bs^{-2} V) \otimes  T(\bs^{-1}A) \, \big) \to  A
\end{equation}
gives us an $L_{\infty}$-morphism from the $\La\Lie_{\infty}$-algebra
$V$ to the $\La\Lie$-algebra $C^{\bul}(A,A)$\,.

The first two statements are obvious. Indeed, the map $Q^{\mc}$ (\ref{Q-mc})
corresponds to the codifferential $\hQ^{\mc}$ on the $\Ger^{\vee}$-coalgebra
$\Ger^{\vee}(V)$. This is a $\Ger_{\infty}$-structure on $V$\,.
The map (\ref{Q-ma-restr}) corresponds to the codifferential
$$
\hQ^{\ma} \Big|_{ \bs \tT(\bs^{-1}A) } :  \bs \tT(\bs^{-1}A) \to
 \bs \tT(\bs^{-1}A)
$$
on the cofree $\La\coass$-coalgebra
$$
\La\coass(A) =  \bs \tT(\bs^{-1}A)\,.
$$
Thus we get an $A_{\infty}$-algebra structure on $A$\,.

To prove the third statement we replace
the $\ma$-component $\bS^{\ma}(V, A)$ of the coalgebra
$\bS(V,A)$ by its desuspension
\begin{equation}
\label{desusp-bS-ma}
\bs^{-1} \bS^{\ma}(V, A) = \tS(\bs^{-2} V) \otimes  T(\bs^{-1}A) ~ \oplus ~
 \tT(\bs^{-1} A )
\end{equation}
and conjugate the $\ma$-component $\hQ^{\ma}$ of
the codifferential $\hQ$ by the suspension isomorphism.
(Thanks to this technical modification we will have simpler 
expressions for sings in our formulas.) 
We will use the same notation $Q^{\ma}$ for this conjugated map:
\begin{equation}
\label{Q-ma-conjugated}
Q^{\ma} :   \, \big(\, \tS(\bs^{-2} V) \otimes   T(\bs^{-1}A) \, \big)
\, \oplus  \, \tT(\bs^{-1} A)
\to \bs^{-1} \, A\,.
\end{equation}

Due to the above shift the comultiplication $\D_{\ma}$
on (\ref{desusp-bS-ma}) has degree $0$ and the explicit formula
for $\D_{\ma}$ reads
\begin{equation}
\label{D-ma}
\begin{array}{c}
\displaystyle
\D_{\ma} (v_1, \dots, v_k; a_1, \dots, a_n) = \\[0.3cm]
\displaystyle
\sum_{(p,t) \in E_{k,n}}
\sum_{\la\in \Sh_{p,k-p}} (-1)^{\eta^{\la}_{p,t}}
(v_{\la(1)}, \dots, v_{\la(p)}; a_1, \dots, a_t)
\otimes \\[0.3cm]
\displaystyle
(v_{\la(p+1)}, \dots, v_{\la(k)}; a_{t+1}, \dots, a_n)
\end{array}
\end{equation}
where
$$
E_{k,n} = \{0,1, \dots, k\} \times  \{0,1, \dots, n\}
\setminus \{(0,0), (k,n)\}\,,
$$
\begin{equation}
\label{eta-la-p-t}
\eta^{\la}_{p,t} =  \sum_{i<j ~ \la(i) > \la(j)}
|v_i||v_j |  ~ +  ~ \sum_{i=1}^t \sum_{j=p+1}^k
|v_{\la(j)}| (|a_i| + 1)\,.
\end{equation}

Using the compatibility of $\hQ$ with the
$\bS$-coalgebra structure one deduces readily a formula
for $\hQ^{\ma}$ in terms of $Q^{\mc}$ and $Q^{\ma}$:
\begin{equation}
\label{Q-hQ}
\begin{array}{c}
\displaystyle
\hQ^{\ma}(v_1, \dots, v_k; a_1, \dots, a_n) = \\[0.3cm]
\displaystyle
\sum_{p =1}^{k}\sum_{\la  \in \Sh_{p,k-p}}
(-1)^{\ve_{\la}}(Q^{\mc} (v_{\la(1)}, \dots, v_{\la(p)}), v_{\la(p+1)},
\dots, v_{\la(k)} ; a_1, \dots, a_n ) +
\end{array}
\end{equation}
$$
\sum_{1\le t \le s \le n}^{0 \le p \le k}
\sum_{\la \in \Sh_{p,k-p}}
(-1)^{\ve_{\la} + \ve^{\la}_{t,p}}(v_{\la(1)}, \dots, v_{\la(p)} ;
a_1, \dots, Q^{\ma}(v_{\la(p+1)},
\dots, v_{\la(k)}; a_{t}, \dots, a_{s} ) \dots, a_n ) +
$$
$$
\sum_{1 \le t \le n+1}^{0 \le p \le k-1}
\sum_{\la \in \Sh_{p,k-p}}
(-1)^{\ve_{\la} + \ve^\la_{t,p}}(v_{\la(1)}, \dots, v_{\la(p)}, ;
a_1, \dots, a_{t-1}, Q^{\ma}(v_{\la(p+1)},
\dots, v_{\la(k)}), a_{t}, \dots, a_n )
$$
where 
\begin{equation}
\label{ve-la}
\ve_{\la} = \sum_{i<j ~ \la(i) > \la(j)}
|v_i||v_j |\,,
\end{equation}
and
\begin{equation}
\label{ve-la-tp}
\ve^{\la}_{t,p} = \sum_{j=1}^{p} |v_{\la(j)}| +
\sum_{j=p+1}^{k} \sum_{l=1}^{t-1} |v_{\la(j)}| (|a_l|+1) +
\sum_{l=1}^{t-1} (|a_l|+1)\,.
\end{equation}

Restricting the codifferential $\hQ^{\mc}$ to the
coalgebra
\begin{equation}
\label{cocomm-V}
\La^2\cocomm(V) \subset \Ger^{\vee}(V)
\end{equation}
we get an $\La\Lie_{\infty}$-algebra structure on $V$\,.

An $L_{\infty}$-morphism from $V$ to $\Cbu(A,A)$ is a degree $0$
map 
\begin{equation}
\label{U}
U : \La^2 \cocomm (V) \to \Cbu(A,A)
\end{equation}
satisfying the coherence equations
\begin{equation}
\label{UandQ}
\begin{array}{c}
\displaystyle
\sum_{p=1}^k \sum_{\la  \in \Sh_{p,k-p}}
(-1)^{\ve_{\la}} U(Q^{\mc} (v_{\la(1)}, \dots, v_{\la(p)}), v_{\la(p+1)},
\dots, v_{\la(k)})\\[0.3cm]
\displaystyle
= -[m , U (v_1, \dots, v_k)]_G \, - \\[0.3cm]
\displaystyle
\sum_{p=1}^{k-1} \sum_{\la  \in \Sh_{p,k-p}}
(-1)^{\ve_{\la} + |v_{\la(1)}| + \dots + |v_{\la(p)}|}
[U(v_{\la(1)}, \dots, v_{\la(p)}) ,
U(v_{\la(p+1)}, \dots, v_{\la(k)}) ]_G\,,
\end{array}
\end{equation}
where $\ve_{\la}$ is defined in (\ref{ve-la}), $[\,,\,]_G$ 
is the Gerstenhaber bracket (\ref{Gerst}),
and 
 $m$ is the $A_{\infty}$-structure on $A$
obtained from $Q^{\ma}$, that is
\begin{equation}
\label{m}
m(a_1, a_2, \dots, a_n) = Q^{\ma} (a_1, a_2, \dots, a_n)\,.
\end{equation}

Now we define the desired  $L_{\infty}$-morphism from $V$ to $\Cbu(A,A)$ 
by the formula:   
\begin{equation}
\label{define-U}
U(v_1, \dots, v_k)(a_1, \dots, a_n) =
Q^{\ma} (v_1, \dots, v_k; a_1, \dots a_n)\,,
\end{equation}
where $v_i \in V$ and $a_j \in A$\,.

We claim that equation (\ref{UandQ}) follows from 
\begin{equation}
\label{Q-nilpotent}
\hQ^{\ma} \circ \hQ^{\ma}  = 0\,.
\end{equation}

Indeed, applying the composition $Q^{\ma} \circ  \hQ^{\ma}$ to 
 the monomial
$$
(v_1, v_2, \dots, v_k, a_1, a_2, \dots, a_n)
$$
with $k\ge 1$ and using (\ref{Q-hQ}) we get
\begin{equation}
\label{Q-nilpotent-apply}
\begin{array}{c}
\displaystyle
\sum_{p = 1}^k \sum_{\la  \in \Sh_{p,k-p}}
(-1)^{\ve_{\la}} Q^{\ma} (Q^{\mc} (v_{\la(1)}, \dots, v_{\la(p)}), v_{\la(p+1)},
\dots, v_{\la(k)} ; a_1, \dots, a_n ) +
\end{array}
\end{equation}
$$
\sum_{1\le t \le s \le n}^{0 \le p \le k}
\sum_{\la \in \Sh_{p,k-p}}
(-1)^{\ve_{\la} + \ve^\la_{t,p}}
Q^{\ma}
(v_{\la(1)}, \dots, v_{\la(p)}, ;
a_1, \dots, Q^{\ma}(v_{\la(p+1)},
\dots, v_{\la(k)}; a_t, \dots, a_s ), \dots, a_n )+
$$
$$
\sum_{1\le t \le n+1}^{0 \le p \le k-1}
\sum_{\la \in \Sh_{p,k-p}}
(-1)^{\ve_{\la} + \ve^{\la}_{t,p}}
Q^{\ma}
(v_{\la(1)}, \dots, v_{\la(p)}, ;
a_1, \dots, a_{t-1}, Q^{\ma}(v_{\la(p+1)},
\dots, v_{\la(k)}), a_{t}, \dots, a_n )\,,
$$
where $\ve_{\la}$ and $\ve^{\la}_{t,p}$ are
defined in (\ref{ve-la}) and (\ref{ve-la-tp}),
respectively.

It is not hard to see that
the first sum in (\ref{Q-nilpotent-apply}) coincides
with the left hand side of (\ref{UandQ})\,.
Furthermore, combining all the terms with $p=0$ or $p=k$ in the
second sum of (\ref{Q-nilpotent-apply}) and all the terms
with $p=0$ in the third sum of (\ref{Q-nilpotent-apply})
we get
$$
[m, U(v_1, \dots, v_k)]_G (a_1, \dots, a_n)\,.
$$
Combining the remaining terms in the second sum and in the
third sum of (\ref{Q-nilpotent-apply}) we get
$$
\sum_{p=1}^{k-1}\sum_{\la  \in \Sh_{p,k-p}}
(-1)^{\ve_{\la} + |v_{\la(1)}| + \dots + |v_{\la(p)}|}
[U(v_{\la(1)}, \dots, v_{\la(p)}) ,
U(v_{\la(p+1)}, \dots, v_{\la(k)}) ]_G\,.
$$

Thus we proved that a codifferential $\hQ$ on the coalgebra
$\bS(V,A)$ gives us a $\Ger_{\infty}$-algebra structure on
$V$, an $A_{\infty}$-structure on $A$, and an $L_{\infty}$-morphism
from $V$ to the $\La\Lie$-algebra $\Cbu(A,A)$\,.

Furthermore, given a $\Ger_{\infty}$-algebra structure
$\cD$ on $V$, an $A_{\infty}$-structure $m$ on $A$, and
an $L_{\infty}$-morphism $U$ from $V$ to $\Cbu(A,A)$
the formulas
$$
Q^{\mc} = \cD\,,
$$
(\ref{m}), (\ref{define-U}) define a degree $1$
codifferential $\hQ$ on $\bS(V,A)$\,.

Theorem \ref{thm:Ger-plus-alg} is proved.

\end{proof}

\subsection{$\Ger^+_{\infty}$ versus the operad $\OC_{\infty}$ governing open-closed 
homotopy algebras (OCHA)} 

Inspired by Zwiebach's open-closed string field theory \cite{Zwiebach} 
H. Kajiura and J. Stasheff introduced in \cite{OCHAStasheff}
open-closed homotopy algebras (OCHA). These homotopy 
algebras were studied in papers \cite{OCHA}, \cite{OCHA1}, 
 \cite{OCHAStasheff1}, and \cite{OCHAStasheff11}. 
 In this subsection we show that the operad $\Ger^+_{\infty}$
 is a natural extension of the operad $\OC_{\infty}$\,. 

To recall the definition of OCHA we introduce $\OC^{\vee}$-coalgebras.

\begin{defi}
\label{dfn:OC-vee}
An $\OC^{\vee}$-coalgebra is a pair of a $\La^2\cocomm$-coalgebra
$V$ and a $\La\coass$-coalgebra $A$ together with a unary
operation
$$
\rho : A \to V
$$
of degree $-1$ such that the following diagrams commute:
\begin{equation}
\label{rho-map-coalg-OC}
\xymatrix@M=0.4pc{
A\ar[r]^{\rho} \ar[d]_{\D_{\ma}}  &  V \ar[d]^{\D_{\mc}}  \\
A\otimes A \ar[r]^{\rho \otimes \rho} & V \otimes V
}
\end{equation}
\begin{equation}
\label{left-right-via-rho-OC}
\xymatrix@M=0.4pc{
~ & A \otimes A \ar[r]^{\rho \otimes \id} & V\otimes A\\
A \ar[ur]^{\D_{\ma}} \ar[dr]_{\D_{\ma}} & ~ & ~ \\
~ & A \otimes A \ar[r]^{\id \otimes \rho} & A\otimes V \ar[uu]^{\si}
}
\end{equation}
where $\D_{\mc}$ (resp. $\D_{\ma}$) is the comultiplication in $V$
(resp. in $A$) and
$$
\si: A \otimes V \to V \otimes A
$$
is the mapping which switches
tensor components.
\end{defi}
We denote by $\OC^{\vee}$ the cooperad  which governs 
$\OC^{\vee}$-coalgebras. 

The operad $\OC_{\infty}$ governing OCHA algebras is by definition 
the cobar construction of the cooperad $\OC^{\vee}$. 
\begin{equation}
\label{OC-infinity}
\OC_{\infty} = \Cobar (\OC^{\vee})\,.
\end{equation}  
In other words, an OCHA algebra structure on a pair 
of graded vector spaces $(V,A)$ is a degree $1$ codifferential   
on the free $\OC^{\vee}$- coalgebra $\OC^{\vee}(V,A)$\,. 

Comparing Definitions  \ref{dfn:S-coalg} and \ref{dfn:OC-vee} it is 
not hard to see that $\OC^{\vee}$ is a sub-cooperad of $\bS$\,. 
Hence $\OC_{\infty}$ is a sub DG operad of $\Ger^+_{\infty}$: 
\begin{equation}
\label{OC-inf-in-Ger-plus}
\OC_{\infty} \hookrightarrow \Ger^+_{\infty}\,.
\end{equation} 
Furthermore, Theorem \ref{thm:Ger-plus-alg} is a generalization 
of the following statement from \cite{OCHAStasheff1}
\begin{teo}[H. Kajiura, J. Stasheff, \cite{OCHAStasheff1}]
\label{thm:OCHA}
An $\OC_{\infty}$-algebra structure on the pair
of cochain complexes $(V,A)$ is a triple:
\begin{itemize}

\item A $\La\Lie_{\infty}$-structure on $V$\,,

\item An $A_{\infty}$-structure on $A$\,, and

\item An $L_{\infty}$-morphism from $V$ to the Hochschild
cochain complex $\Cbu(A,A)$ of the $A_{\infty}$-algebra $A$\,.

\end{itemize}

\end{teo}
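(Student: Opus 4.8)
The plan is to repeat the proof of Theorem~\ref{thm:Ger-plus-alg} almost verbatim, keeping track of the single place where the two statements diverge. Comparing Definitions~\ref{dfn:S-coalg} and~\ref{dfn:OC-vee}, one sees that $\OC^{\vee}$ is exactly the sub-cooperad of $\bS$ obtained by discarding, on the $\mc$-colored part, the cobracket $\de_{\mc}$ and keeping only the cocommutative comultiplication $\D_{\mc}$ (the constraint $\de_{\mc}\circ\rho = 0$ of Definition~\ref{dfn:S-coalg} has no analogue in Definition~\ref{dfn:OC-vee} precisely because $\La^2\cocomm$ has no cobracket). Consequently the cofree $\OC^{\vee}$-coalgebra cogenerated by $(V,A)$ decomposes as
\[
\OC^{\vee}(V,A) \;=\; \La^2\cocomm(V)\ \oplus\ \bS^{\ma}(V,A)\,,
\]
where the $\mc$-colored summand is $\La^2\cocomm(V) = \bs^2 S(\bs^{-2}V)$ (replacing $\Ger^{\vee}(V) = \bS^{\mc}(V,A)$) and the $\ma$-colored summand is unchanged, still $\bS^{\ma}(V,A) = \bs\big(\tS(\bs^{-2}V)\otimes T(\bs^{-1}A)\big)\oplus\bs\,\tT(\bs^{-1}A)$.

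Next I would unwind the definition. An $\OC_{\infty} = \Cobar(\OC^{\vee})$-algebra structure on $(V,A)$ is a degree $1$ codifferential $\hQ$ on $\OC^{\vee}(V,A)$, splitting into $\hQ^{\mc}$ on $\La^2\cocomm(V)$ and $\hQ^{\ma}$ on $\bS^{\ma}(V,A)$; by cofreeness it is determined by the corecursive data $Q^{\mc} : \La^2\cocomm(V)\to V$ and $Q^{\ma} : \bs^{-1}\bS^{\ma}(V,A)\to\bs^{-1}A$, after performing the same desuspension of the $\ma$-component as in the proof of Theorem~\ref{thm:Ger-plus-alg} so as to keep the signs transparent. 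As there: $\hQ^{\mc}$, being a codifferential on the cofree $\La^2\cocomm$-coalgebra, is a $\La\Lie_{\infty}$-structure on $V$; the restriction of $Q^{\ma}$ to $\bs\,\tT(\bs^{-1}A) = \La\coass(A)$ is an $A_{\infty}$-structure $m$ on $A$; and the restriction of $Q^{\ma}$ to $\bs\big(\tS(\bs^{-2}V)\otimes T(\bs^{-1}A)\big)$ I would repackage, via formula~(\ref{define-U}), into a map $U(v_1,\dots,v_k)(a_1,\dots,a_n) = Q^{\ma}(v_1,\dots,v_k;a_1,\dots,a_n)$.

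The core of the proof is then to show that $\hQ^{\ma}\circ\hQ^{\ma} = 0$ is equivalent to the $A_{\infty}$-relations for $m$ together with the $L_{\infty}$-morphism relations~(\ref{UandQ}) for $U$. I would do this by expanding $Q^{\ma}\circ\hQ^{\ma}$ on a monomial $(v_1,\dots,v_k;a_1,\dots,a_n)$ with $k\ge 1$, using that the expression~(\ref{Q-hQ}) for $\hQ^{\ma}$ in terms of $Q^{\mc}$ and $Q^{\ma}$ continues to hold verbatim here, with the same signs~(\ref{eta-la-p-t}),~(\ref{ve-la}),~(\ref{ve-la-tp}) (its derivation only used compatibility of $\hQ$ with $\D_{\mc}$, $\D_{\ma}$ and $\rho$). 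The term-matching is then identical to the one in the proof of Theorem~\ref{thm:Ger-plus-alg}: the first sum gives the left-hand side of~(\ref{UandQ}); the terms with $p = 0$ or $p = k$ in the second sum, together with the $p = 0$ terms of the third sum, assemble into $[m,U(v_1,\dots,v_k)]_G$; and the remaining terms assemble into $\sum_{p=1}^{k-1}(\pm)[U(\dots),U(\dots)]_G$. The converse direction is the same computation run backwards. The only structural difference from Theorem~\ref{thm:Ger-plus-alg} is that $Q^{\mc}$ now carries no Gerstenhaber cup product, only the $\La\Lie_{\infty}$ cobracket; this is why the first sum lands on the $\La\Lie_{\infty}$-morphism constraint and why the first bullet of Theorem~\ref{thm:OCHA} reads ``$\La\Lie_{\infty}$'' where Theorem~\ref{thm:Ger-plus-alg} reads ``$\Ger_{\infty}$''.

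Since the argument is in effect a restriction of one already carried out, I do not expect a genuinely new obstacle; the only point deserving care is the claim in the first paragraph that the $\ma$-colored summand of $\OC^{\vee}(V,A)$ coincides with $\bS^{\ma}(V,A)$, i.e.\ that dropping $\de_{\mc}$ removes none of the mixed cooperations $A\to V^{\otimes p}\otimes A^{\otimes q}$. This is clear because those cooperations are assembled from $\D_{\mc}$ and $\rho$ alone --- the cobracket $\de_{\mc}$ never enters them, on account of $\de_{\mc}\circ\rho = 0$ --- so the same monomials survive, with the same degrees and signs. Alternatively one could deduce Theorem~\ref{thm:OCHA} from Theorem~\ref{thm:Ger-plus-alg} by extending any codifferential on $\OC^{\vee}(V,A)$ to one on $\bS(V,A)$ whose cup-product operations on the $\mc$-side vanish and then noting that the resulting $\Ger_{\infty}$-structure is the one pulled back from its underlying $\La\Lie_{\infty}$-structure; but the direct repetition above is cleaner and self-contained.
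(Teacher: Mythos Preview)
Your proposal is correct. The paper does not give its own proof of Theorem~\ref{thm:OCHA}; it cites the result from Kajiura--Stasheff \cite{OCHAStasheff1} and remarks that Theorem~\ref{thm:Ger-plus-alg} is a generalization of it. Your approach---restricting the proof of Theorem~\ref{thm:Ger-plus-alg} to the sub-cooperad $\OC^{\vee}\hookrightarrow\bS$, noting that the $\ma$-colored summand is unchanged because $\de_{\mc}\circ\rho=0$ already kills the cobracket there---is exactly the way the paper implicitly suggests the two statements are related, so this is the intended argument.
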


\begin{example}
\label{OCHA-Cbu-A}
For every $A_{\infty}$-algebra $A$ the pair $(\Cbu(A,A), A)$ carries 
a tautological OCHA structure: the  $\La\Lie_{\infty}$-structure on 
$\Cbu(A,A)$ is simply the $\La\Lie$-algebra structure given by 
the Hochschild differential (\ref{Hoch}) and the Gerstenhaber bracket (\ref{Gerst});
the $A_{\infty}$-structure on $A$ is the given one;  finally, 
the $L_{\infty}$-morphism from $\Cbu(A,A)$ to $\Cbu(A,A)$ is 
the identity map.  
\end{example}

\subsection{Homotopies of $\Ger^+_{\infty}$-algebra structures}
Let $(V,A)$ be a pair of cochain complexes. Let
$\hQ$ and $\hQ'$ be $\Ger^+_{\infty}$-algebra
structures on $(V,A)$\,.
\begin{defi}
\label{dfn:homotopies}
We say that the $\Ger^+_{\infty}$-algebra structures
$\hQ$ and $\hQ'$ are homotopic if there exists a degree
$0$ coderivation $\hpsi$ of the coalgebra $\bS(V,A)$ satisfying
\begin{equation}
\label{conjugated}
\hQ' = \exp([\hpsi, \,]) \hQ\,.
\end{equation}
\end{defi}
\begin{remark}
\label{rem:homot-q-iso}
Homotopic $\Ger^+_{\infty}$-structures on a pair $(V,A)$
are obviously quasi-isomorphic. Namely, a  $\Ger^+_{\infty}$ quasi-isomorphism
from $(V,A, \hQ)$ to $(V,A, \hQ')$ is given by the $\bS$-coalgebra
automorphism $\exp(\hpsi)$ of $\bS(V,A)$\,.
\end{remark}
\begin{remark}
Definition \ref{dfn:homotopies} is justified by Proposition 4.10
from paper\footnote{I would like to thank Bruno Vallette for showing me this paper.} 
\cite{ChuangLazarev} by J. Chuang and A. Lazarev.
Indeed,
let $\End_{(V,A)}$ be the endomorphism operad of the pair $(V,A)$\,.
Then Proposition 4.10 from \cite{ChuangLazarev}
implies  that $\Ger^+_{\infty}$-algebra structures
$\hQ$ and $\hQ'$ are in the relation (\ref{conjugated}) if
and only if the corresponding maps of DG operads
$$
\Cobar(\bS) \to  \End_{(V,A)}
$$
are homotopic with the homotopy defined via a path object. 
\end{remark}

We will be interested in homotopies between $\Ger^+_{\infty}$-algebra
structures for which the coderivation
$$
\hpsi \in \Coder(\bS(V,A))
$$
satisfies the following additional conditions
\begin{equation}
\label{condition-V}
\hpsi \Big|_{\Ger^{\vee}(V)} = 0\,.
\end{equation}
\begin{equation}
\label{condition-A}
\hpsi \Big|_{ \tT(\bs^{-1}A)} = 0\,.
\end{equation}
These conditions are motivated by
the following obvious proposition:
\begin{pred}
\label{prop:conditions}
Let $(V,A)$ be a pair of cochain complexes and
$\hQ$, $\hQ'$ be $\Ger^+_{\infty}$-algebra
structures on $(V,A)$ satisfying equation
(\ref{conjugated}). If the coderivation
$\hpsi$ enjoys condition (\ref{condition-V}) then
the $\Ger_{\infty}$-algebra structures on $V$ corresponding
to $\hQ$ and $\hQ'$ coincide.
If $\hpsi$ enjoys condition (\ref{condition-A}) then
the $A_{\infty}$-algebra structures on $A$ corresponding
to $\hQ$ and $\hQ'$ coincide.
\end{pred}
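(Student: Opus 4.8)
The proposition is a bookkeeping statement, and the plan is to reduce it to the observation --- already extracted inside the proof of Theorem~\ref{thm:Ger-plus-alg} --- that the $\Ger_\infty$-structure on $V$ and the $A_\infty$-structure on $A$ are literally the restrictions of the codifferential $\hQ$ to two $\hQ$-stable subspaces of $\bS(V,A)$. Concretely, I would first recall that $\hQ$ splits as $\hQ^\mc\oplus\hQ^\ma$ with $\hQ^\mc:\Ger^\vee(V)\to\Ger^\vee(V)$, that $\hQ^\ma$ preserves the summand $\bs\,\tT(\bs^{-1}A)\subset\bS^\ma(V,A)$, and that the induced $\Ger_\infty$-structure on $V$ is precisely $\hQ\big|_{\Ger^\vee(V)}$ while the induced $A_\infty$-structure on $A$ is precisely $\hQ\big|_{\bs\,\tT(\bs^{-1}A)}$; the same statements hold verbatim for $\hQ'$.

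Next I would expand $\hQ'=\exp([\hpsi,\,])\hQ=\sum_{n\ge0}\tfrac1{n!}(\mathrm{ad}_{\hpsi})^{n}(\hQ)$ and show, assuming condition~(\ref{condition-V}), that $(\mathrm{ad}_{\hpsi})^{n}(\hQ)$ vanishes on $\Ger^\vee(V)$ for every $n\ge1$, by induction on $n$. For $n=1$, write $[\hpsi,\hQ]=\hpsi\circ\hQ-\hQ\circ\hpsi$: the second term kills $\Ger^\vee(V)$ because $\hpsi$ does, and the first term kills $\Ger^\vee(V)$ because $\hQ$ maps $\Ger^\vee(V)$ into itself and $\hpsi$ then annihilates it. For the inductive step, $(\mathrm{ad}_{\hpsi})^{n}(\hQ)=\hpsi\circ(\mathrm{ad}_{\hpsi})^{n-1}(\hQ)-(\mathrm{ad}_{\hpsi})^{n-1}(\hQ)\circ\hpsi$, and both terms vanish on $\Ger^\vee(V)$: the second because $\hpsi$ does, the first because $(\mathrm{ad}_{\hpsi})^{n-1}(\hQ)$ does by the inductive hypothesis. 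Hence $\hQ'\big|_{\Ger^\vee(V)}=\hQ\big|_{\Ger^\vee(V)}$, so the two $\Ger_\infty$-structures on $V$ agree. Running the identical argument with $\bs\,\tT(\bs^{-1}A)$ in place of $\Ger^\vee(V)$ and condition~(\ref{condition-A}) in place of~(\ref{condition-V}) gives the corresponding statement for the $A_\infty$-structures on $A$.

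There is no genuine obstacle here: the argument uses only that $\hpsi$ annihilates the relevant subspace and that $\hQ$, $\hQ'$ preserve it, so the coderivation property of $\hpsi$ is not even needed. The one point requiring a little care --- and this was already done inside the proof of Theorem~\ref{thm:Ger-plus-alg} --- is the identification of the induced $\Ger_\infty$- and $A_\infty$-structures with these honest restrictions; one should also note that the exponential series makes sense because $\hpsi$ is locally nilpotent with respect to the arity filtration on $\bS(V,A)$.
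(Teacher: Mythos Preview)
Your argument is correct and follows the same approach as the paper: both proofs rest on the identification, established in the proof of Theorem~\ref{thm:Ger-plus-alg}, of the $\Ger_\infty$-structure on $V$ with $\hQ\big|_{\Ger^\vee(V)}$ and of the $A_\infty$-structure on $A$ with $\hQ\big|_{\bs\,\tT(\bs^{-1}A)}$. The paper leaves the passage from conditions~(\ref{condition-V}), (\ref{condition-A}) to the equality of restrictions implicit, whereas you have spelled it out via the expansion of $\exp(\mathrm{ad}_{\hpsi})$ and an induction on $n$; this extra detail is welcome but not a different strategy.
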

\begin{proof}
The first statement follows from the fact that the $\Ger_{\infty}$-algebra
structure on $V$ corresponding to $\hQ$
is given by the $\mc$-component (\ref{hQ-mc}) of $\hQ$\,.
The second statement follows from the fact that the $A_{\infty}$-algebra
structure on $A$ corresponding to $\hQ$
is obtained by restricting $Q^{\ma}$ (\ref{Q-ma})
to
$$
\tT(\bs^{-1} A) \subset \bs^{-1} \bS^{\ma}(V,A)\,.
$$
\end{proof}

Let $\cL$ and $\tcL$ be $\La\Lie_{\infty}$-algebras.

Recall from
\cite{Erratum} and \cite{Shoikhet}
that $L_{\infty}$-morphisms from
$\cL$ to $\tcL$ can be identified with Maurer-Cartan elements
of the auxiliary $L_{\infty}$-algebra:
\begin{equation}
\label{aux-L-infty}
\bs \Hom(\bs^2 \tS(\bs^{-2}\cL), \tcL)\,.
\end{equation}
The $L_{\infty}$-structure on (\ref{aux-L-infty})
is defined in terms of the $\La\Lie_{\infty}$-structure on
$\tcL$, the comultiplication and the codifferential
on $\bs^2 \tS(\bs^{-2}\cL)$\,. In particular, if $\tcL$ is
a $\La\Lie$-algebra then (\ref{aux-L-infty}) is a DG Lie algebra.
This is an important observation because in our consideration
below
$\tcL$ is the Hochschild cochain complex $\Cbu(A,A)$ and
hence a $\La\Lie$-algebra.

It was also shown in \cite{Erratum} and \cite{Shoikhet}
that homotopic $L_{\infty}$-morphisms correspond to
isomorphic\footnote{We think of Maurer-Cartan elements as
objects of the corresponding
Deligne groupoid \cite{Ezra-higher}, \cite{Ezra-Lie}.}
Maurer-Cartan elements in (\ref{aux-L-infty}).

We will need the following theorem.
\begin{teo}
\label{thm:homotopies}
Let $(V,A)$ be a pair of cochain complexes and
$\hQ$, $\hQ'$ be $\Ger^+_{\infty}$-algebra
structures on $(V,A)$ with coinciding $\Ger_{\infty}$-structures
on $V$ and coinciding $A_{\infty}$-structures on $A$\,.
Let $\hU$ and $\hU'$ be $L_{\infty}$-morphisms
from $V$ to $\Cbu(A,A)$ corresponding to
$\Ger^+_{\infty}$-algebra
structures $\hQ$ and $\hQ'$, respectively.
Then the $L_{\infty}$-morphisms $\hU$ and $\hU'$ are
homotopic if and only if $\hQ$ and $\hQ'$ satisfy
equation (\ref{conjugated}) with a coderivation $\hpsi$
enjoying conditions (\ref{condition-V})
and (\ref{condition-A}).
\end{teo}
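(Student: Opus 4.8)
The plan is to translate the conjugation relation (\ref{conjugated}) and the side conditions (\ref{condition-V}), (\ref{condition-A}) into the language of the auxiliary $L_{\infty}$-algebra (\ref{aux-L-infty}) governing $L_{\infty}$-morphisms from $V$ to $\Cbu(A,A)$, and then invoke the identification of homotopic $L_{\infty}$-morphisms with isomorphic Maurer-Cartan elements recalled from \cite{Erratum} and \cite{Shoikhet}. So the first step is to write down, given a coderivation $\hpsi \in \Coder(\bS(V,A))$ satisfying (\ref{condition-V}) and (\ref{condition-A}), what data $\hpsi$ really consists of. Since $\hpsi$ kills $\Ger^{\vee}(V)$ and $\tT(\bs^{-1}A)$, its only nontrivial component is the map
$$
\psi : \bs\,\big(\,\tS(\bs^{-2}V)\otimes T(\bs^{-1}A)\,\big) \to A\,,
$$
i.e. after the desuspension trick from the proof of Theorem \ref{thm:Ger-plus-alg}, a degree $0$ element of $\bs\,\Hom(\bs^2\tS(\bs^{-2}V), \Cbu(A,A))$ — precisely an element of the underlying graded vector space of (\ref{aux-L-infty}). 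Conversely, every such $\psi$ extends uniquely (by the cofreeness of $\bS(V,A)$) to a coderivation $\hpsi$ satisfying (\ref{condition-V}) and (\ref{condition-A}).

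The second and main step is to check that the conjugation $\hQ' = \exp([\hpsi,\,])\hQ$, when restricted to the $\ma$-component landing in $A$ and read off through the dictionary of Theorem \ref{thm:Ger-plus-alg}, is exactly the gauge action of $\psi$ on the Maurer-Cartan element $U$ of (\ref{aux-L-infty}) corresponding to $\hU$. Here one uses that $\hQ$ and $\hQ'$ have the \emph{same} $\Ger_{\infty}$-structure on $V$ and the \emph{same} $A_{\infty}$-structure on $A$, which by Proposition \ref{prop:conditions} is automatic from (\ref{condition-V}), (\ref{condition-A}); this guarantees that $[\hpsi,\hQ]$ has no component hitting $\Ger^{\vee}(V)$ or $\tT(\bs^{-1}A)$, so the whole flow stays within the ``$L_{\infty}$-morphism slice.'' The computation of $[\hpsi, -]$ on $\hQ$ is bookkeeping with the explicit comultiplication $\D_{\ma}$ (\ref{D-ma}) and the formula (\ref{Q-hQ}) for $\hQ^{\ma}$: the bracket of $\hpsi$ with the $Q^{\mc}$-part of $\hQ$ reproduces the differential coming from the $\La\Lie_{\infty}$-structure on $V$ acting on (\ref{aux-L-infty}); the bracket with the $m$-part and with the $U$-part of $\hQ$ reproduces the $[m,-]_G$ term and the brackets $[U(\cdots),\psi(\cdots)]_G$ respectively, i.e. the remaining summands of the $L_{\infty}$-structure on (\ref{aux-L-infty}). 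Matching these term-by-term (comparing signs via (\ref{ve-la}), (\ref{ve-la-tp}) and the sign convention for the bracket on (\ref{aux-L-infty}) from \cite{Erratum}, \cite{Shoikhet}) identifies $\exp([\hpsi,-])$ acting on the $\ma$-component of $\hQ$ with the exponentiated gauge action $e^{\psi}\cdot U$. The hard part will be precisely this sign-matching and the verification that the combinatorics of $[\hpsi,-]$ reassemble into the gauge-action formula; the conceptual content is light, but making the two sign systems agree is where the real work lies. A clean way to organize it is to note that $\bigoplus_{k\ge 0}\bs^k\Coder\big(\bS^{\ma}(V,A)\big)$-type brackets, restricted to coderivations vanishing on $\Ger^{\vee}(V)\oplus\tT(\bs^{-1}A)$, is literally the convolution DG Lie (indeed, here $\La\Lie$, since $\Cbu(A,A)$ carries a strict bracket) algebra (\ref{aux-L-infty}), so the identification of the two brackets is a formality once the isomorphism of graded vector spaces is fixed.

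With these two steps in place the theorem is immediate. If $\hU$ and $\hU'$ are homotopic, then by \cite{Erratum}, \cite{Shoikhet} the corresponding Maurer-Cartan elements $U, U'$ of (\ref{aux-L-infty}) are gauge equivalent, say $U' = e^{\psi}\cdot U$; lifting $\psi$ to the coderivation $\hpsi$ as in Step 1 and using Step 2 gives $\hQ' = \exp([\hpsi,-])\hQ$ with $\hpsi$ satisfying (\ref{condition-V}), (\ref{condition-A}) — here one also uses that both $\hQ'$ and $\exp([\hpsi,-])\hQ$ have the same $\mc$-component and the same $A_{\infty}$-part (since $\hpsi$ is supported on the middle piece), and that a codifferential on $\bS(V,A)$ is determined by its three components $Q^{\mc}$, $m$, $U$ via Theorem \ref{thm:Ger-plus-alg}, so agreement of the three components forces $\hQ' = \exp([\hpsi,-])\hQ$ on the nose. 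Conversely, if $\hQ' = \exp([\hpsi,-])\hQ$ with $\hpsi$ enjoying (\ref{condition-V}) and (\ref{condition-A}), then Step 1 extracts $\psi$, Step 2 shows $U' = e^{\psi}\cdot U$, hence $U$ and $U'$ are gauge equivalent Maurer-Cartan elements and therefore $\hU$ and $\hU'$ are homotopic $L_{\infty}$-morphisms. This completes the proof.
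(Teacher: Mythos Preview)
Your proposal is correct and follows essentially the same strategy as the paper: identify the corestriction $\psi$ of $\hpsi$ with a degree~$0$ element $\theta$ of the auxiliary DG Lie algebra $\cH$ \eqref{aux-DGLA}, and then check that the conjugation action $\exp([\hpsi,-])$ on $\hQ$, read off in the $\ma$-component via Theorem~\ref{thm:Ger-plus-alg}, coincides with the gauge action \eqref{UU-prime-te} of $\theta$ on the Maurer--Cartan element $U$. The only cosmetic difference is that the paper carries out the exponentiation step by introducing paths $\hQ_t = \exp(t[\hpsi,-])\hQ$ and $U_t$ and matching the two ODEs $\frac{d}{dt}(\cdot) = d_{\cH}\theta + [\theta,\cdot]_{\cH}$ they satisfy, whereas you propose to identify the bracket $[\hpsi,-]$ on the relevant slice directly with $d_{\cH} + [\theta,-]_{\cH}$ and exponentiate; these are the same computation.
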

\begin{proof}
We start with the part ``if''.

Let $U$ (resp. $U'$) be the composition of $\hU$
(resp. $\hU'$) with the natural projection
$p:  \bs^2 \tS(\bs^{-2}  \Cbu(A,A) ) \to  \Cbu(A,A)$
\begin{equation}
\label{UU-prime}
U, U' : \bs^2 \tS(\bs^{-2} V) \to  \Cbu(A,A)\,.
\end{equation}
Following \cite{Erratum}, \cite{Shoikhet} we view the
maps $U$ and $U'$ as degree $1$ elements of the
following auxiliary DG Lie algebra
\begin{equation}
\label{aux-DGLA}
\cH = \bs \Hom(\bs^2 \tS(\bs^{-2}V), \Cbu(A,A))\,.
\end{equation}
We need to show that there exists a degree zero
element
\begin{equation}
\label{te-aux-DGLA}
\te \in \bs \Hom(\bs^2 \tS(\bs^{-2}V), \Cbu(A,A))
\end{equation}
such that
\begin{equation}
\label{UU-prime-te}
U'= \exp([\te, \,]_{\cH}) U  +
\frac{ \exp([\te, \,]_{\cH}) - 1}{[\te, \,]_{\cH}} d_{\cH} \te \,,
\end{equation}
where $d_{\cH}$ and $[\,,\,]_{\cH}$ are the differential and the
bracket in $\cH$ (\ref{aux-DGLA}), respectively.

We will show that the desired element $\te$ can
be expressed in terms of the map
$$
\psi = p \circ \hpsi\,,
$$
where $p$ is the natural projection $p : \bS(V,A) \to V \oplus A$\,.
More precisely, for $v_i \in V$ and $a_j \in A$ we set
\begin{equation}
\label{te-define}
\te (v_1, \dots, v_k) (a_1, \dots, a_n) =
\psi (v_1, \dots, v_k ; a_1, \dots, a_n)\,.
\end{equation}
It is easy to check that $\te$ has the degree $0$
in (\ref{aux-DGLA}).

To prove (\ref{UU-prime-te}) we introduce an auxiliary
variable $t$ and the following path of $L_{\infty}$-morphisms
\begin{equation}
\label{U-t}
U_t= \exp(t[\te, \,]_{\cH}) U  +
\frac{ \exp(t[\te, \,]_{\cH}) - 1}{[\te, \,]_{\cH}} d_{\cH} \te \,.
\end{equation}

Similarly, we introduce a path $\hQ_t$ of $\Ger^+_{\infty}$-algebra
structures on the pair $(V,A)$
\begin{equation}
\label{hQ-t}
\hQ_t = \exp(t[\hpsi, \,]) \hQ\,.
\end{equation}
Let $\tU_t$ be the $L_{\infty}$-morphism from
$V$ to $\Cbu(A,A)$ corresponding to $\hQ_t$\,.
Namely,
\begin{equation}
\label{tU-t}
\tU_t(v_1, \dots, v_k) (a_1, \dots, a_n) =
Q_t(v_1, \dots, v_k ; a_1, \dots, a_n)\,,
\end{equation}
where $Q_t$ is the composition $p\circ \hQ_t$
of $\hQ_t$ with the natural projection
$p: \bS(V,A)\to V \oplus A$\,.

The codifferential $\hQ_t$ satisfies the differential
equation
\begin{equation}
\label{hQ-t-diff-eq}
\frac{d}{d t} \hQ_t = [\hpsi, \hQ_t\,]\,.
\end{equation}
Hence the composition $Q_t= p \circ \hQ_t$ satisfies
the equation
\begin{equation}
\label{Q-t-diff-eq}
\frac{d}{d t} Q_t = \psi \circ \hQ_t -
Q_t \circ \hpsi
\end{equation}
Combining this equation with (\ref{te-define})
and (\ref{tU-t}) it is not hard to show that
$\tU_t$ satisfies the differential equation
\begin{equation}
\label{tU-t-diff-eq}
\frac{d}{d t} \tU_t = d_{\cH} \te + [\te, \tU_t]_{\cH}
\end{equation}
where $d_{\cH}$ and $[\,,\,]_{\cH}$ are the differential and the
bracket in $\cH$ (\ref{aux-DGLA}), respectively.

On the other hand, $U_t$ (\ref{U-t}) satisfies the
same differential equation (\ref{tU-t-diff-eq}) with the
same initial condition
$$
U_t \Big|_{t=0} = \tU_t \Big|_{t=0} = U \,.
$$
Thus equation (\ref{UU-prime-te}) holds and the
desired implication is proved. 

The proof of ``only if'' part is similar.

We define the map
$$
\psi: \bS(V,A) \to V \oplus A
$$
by setting
\begin{equation}
\label{psi-V}
\psi \Big|_{\Ger^{\vee}(V)} = 0
\end{equation}
and
\begin{equation}
\label{psi-V-A}
\psi(v_1, v_2, \dots, v_k, a_1, a_2, \dots a_n)=
\begin{cases}
\te(v_1, v_2, \dots, v_k)(a_1, a_2, \dots a_n)\,, {\rm if} ~ k \ge 1\,, \cr
0\,, ~ {\rm otherwise}\,.
\end{cases}
\end{equation}
Next, we let $\hpsi$ be the coderivation corresponding
to $\psi$\,.

Then using a similar line of arguments it is
not hard to deduce equation (\ref{conjugated})
from (\ref{UU-prime-te}).

\end{proof}

\subsection{Transfer theorem}
Let us now show that $\Ger_{\infty}^+$-algebra
structures can be transferred across quasi-isomorphisms.
More precisely,
\begin{teo}
\label{thm:transfer}
Given quasi-isomorphisms of cochain complexes
\begin{equation}
\label{q-isos}
V \to V' \qquad A \to A'
\end{equation}
and a $\Ger^+_{\infty}$-algebra structure on the pair $(V',A')$
one can construct a $\Ger^+_{\infty}$-algebra structure on
$(V,A)$ and a $\Ger^+_{\infty}$ quasi-isomorphism
$$
\hT : (V,A) \brarrow (V',A')
$$
which extends (\ref{q-isos}).
\end{teo}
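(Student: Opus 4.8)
The plan is to reduce the transfer statement for $\Ger^+_\infty$-structures to three separate, already-understood transfer problems via the alternative description of $\Ger^+_\infty$-algebras given in Theorem~\ref{thm:Ger-plus-alg}. By that theorem, the given $\Ger^+_\infty$-structure on $(V',A')$ amounts to a $\Ger_\infty$-structure $\cD'$ on $V'$, an $A_\infty$-structure $m'$ on $A'$, and an $L_\infty$-morphism $\hU' : V' \brarrow \Cbu(A',A')$. The target $\Ger^+_\infty$-structure on $(V,A)$ will be assembled from transferred versions of these three pieces, and the $\Ger^+_\infty$ quasi-isomorphism $\hT$ will be assembled likewise.

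First I would transfer the $\Ger_\infty$-structure $\cD'$ along $V \to V'$. Since $\Ger_\infty = \Cobar(\Ger^\vee)$ and $\Ger^\vee$ is a (coaugmented) cooperad, the homotopy transfer theorem for algebras over a cobar-construction (equivalently, the minimal model / cofibrancy property invoked in the introduction) produces a $\Ger_\infty$-structure $\cD$ on $V$ together with a $\Ger_\infty$ quasi-isomorphism $\hT^{\mc} : V \brarrow V'$. Dually, I would transfer $m'$ along $A \to A'$: this is the classical $A_\infty$ transfer theorem, yielding an $A_\infty$-structure $m$ on $A$ and an $A_\infty$ quasi-isomorphism $\hT^{\ma}_0 : A \brarrow A'$. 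Both steps are standard, so I would only indicate which transfer machine is being applied and fix notation.

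The third and genuinely new step is to transfer the $L_\infty$-morphism $\hU'$ so that it becomes compatible with the new structures on the source and target. Here I would proceed as follows. The quasi-isomorphism $\hT^{\mc}$ on the closed side gives, in particular, an $L_\infty$ quasi-isomorphism $V \brarrow V'$ of the underlying $\La\Lie_\infty$-algebras; inverting it up to homotopy (possible since $L_\infty$ quasi-isomorphisms of $\La\Lie_\infty$-algebras admit homotopy inverses), I get $f : V' \brarrow V$. On the open side, the $A_\infty$ quasi-isomorphism $\hT^{\ma}_0 : (A,m) \brarrow (A',m')$ induces an $L_\infty$ quasi-isomorphism of Hochschild cochain complexes $g : \Cbu(A,A) \brarrow \Cbu(A',A')$ (functoriality of $\Cbu$ up to homotopy under $A_\infty$ quasi-isomorphisms); again this admits a homotopy inverse $h : \Cbu(A',A') \brarrow \Cbu(A,A)$. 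I then set
\begin{equation}
\label{transferred-U}
\hU = h \circ \hU' \circ (\hT^{\mc}) : V \brarrow \Cbu(A,A)\,,
\end{equation}
which is an $L_\infty$-morphism for the new $\La\Lie_\infty$-structure on $V$ (coming from $\cD$) and the $\La\Lie$-structure on $\Cbu(A,A)$ (coming from $m$). Applying Theorem~\ref{thm:Ger-plus-alg} in the reverse direction, the triple $(\cD, m, \hU)$ assembles into a codifferential $\hQ$ on $\bS(V,A)$, i.e. a $\Ger^+_\infty$-structure on $(V,A)$.

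It remains to package the three quasi-isomorphisms into a single $\Ger^+_\infty$ quasi-isomorphism $\hT : (V,A) \brarrow (V',A')$. The natural candidate is the morphism of $\bS$-coalgebras $\hT : \bS(V,A) \to \bS(V',A')$ whose $\mc$-component is $\hT^{\mc}$, whose restriction to $\tT(\bs^{-1}A)$ is $\hT^{\ma}_0$, and whose mixed component (on $\bs(\tS(\bs^{-2}V)\otimes T(\bs^{-1}A))$) is the $L_\infty$-morphism-of-morphisms component produced along with \eqref{transferred-U}; the latter is exactly the homotopy datum witnessing that $\hU$ and $g^{-1}\circ\hU'\circ\hT^{\mc}$ (hence $\hU'$) intertwine through $\hT^{\mc}$, $\hT^{\ma}_0$. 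The restrictions \eqref{T-1-V} and \eqref{T-1-A} are then $\hT^{\mc}|_V$ and $\hT^{\ma}_0|_A$, which are quasi-isomorphisms by construction, so $\hT$ is a $\Ger^+_\infty$ quasi-isomorphism in the sense of Definition~\ref{dfn:Ger-plus-q-iso}. \textbf{The main obstacle} I anticipate is precisely the verification that these separately-transferred pieces are mutually compatible --- i.e. that the mixed component of $\hT$ can be chosen to satisfy the coherence equations relating $\hQ$ and $\hQ'$ through $\hT$. Rather than checking this by hand on the explicit formula \eqref{Q-hQ}, I would invoke the cofibrancy of $\Ger^+_\infty$: since $\Cobar(\bS)$ is cofibrant (the minimal model condition) and $\End_{(V,A)} \to \End_{(V',A')}\times_{\cdots}$-type maps induced by \eqref{q-isos} are acyclic fibrations onto the relevant sub-object, the lifting property directly yields a $\Ger^+_\infty$-structure on $(V,A)$ and a map $\hT$ covering the given data, which is the clean way to dispatch the compatibility without combinatorics.
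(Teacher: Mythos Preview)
Your route is genuinely different from the paper's. The paper does not decompose via Theorem~\ref{thm:Ger-plus-alg} at all; it works directly on the $\bS$-coalgebra $\bS(V,A)$ and builds $\hQ$ and $\hT$ simultaneously by induction on a filtration of the cooperad $\bS$, following Hinich's Lemma~4.2.1. The one real issue the paper addresses is that the naive filtration by arity fails here because $\bS$ has a nontrivial \emph{unary} cooperation $\rho$; so the paper assigns weights $w(\rho)=w(\D_{\ma})=1$, $w(\D_{\mc})=w(\de_{\mc})=2$ to the generating cooperations, verifies that the resulting filtration satisfies the minimal model condition $\D_k(F^n\bS)\subset\bigoplus_{p+q=n}F^p\bS\otimes F^q\bS$, and then the inductive obstruction argument runs. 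That is the whole proof.

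Your three-piece strategy has a gap exactly where you flag it. After transferring $\cD'$, $m'$ and $\hU'$ separately you must still produce the mixed component of $\hT$, i.e.\ a map $\bs\big(\tS(\bs^{-2}V)\otimes T(\bs^{-1}A)\big)\to A'$ making $\hT\hQ=\hQ'\hT$ hold. Nothing in your construction supplies this: the homotopy inverses $f,h$ and the composite $\hU$ land in Hochschild cochains, not in $A'$, and there is no ``$L_\infty$-morphism-of-morphisms component'' automatically packaged with your $\hU$. Your fallback to cofibrancy is in spirit the paper's method, but as formulated it does not go through: a bare pair of quasi-isomorphisms $V\to V'$, $A\to A'$ does not yield an acyclic fibration between endomorphism-type operads against which to lift; one needs either contraction data or the filtered inductive argument, and for the latter you still owe the weighted filtration (arity alone is insufficient because of $\rho$). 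So the decomposition buys you nothing while adding the cost of the auxiliary ingredients (functoriality of $\Cbu$ under $A_\infty$ quasi-isomorphisms, homotopy inverses of $L_\infty$ quasi-isomorphisms) you invoked along the way.
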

\begin{remark}
The $\Ger^+_{\infty}$-algebra structure obtained in
this way on pair $(V,A)$ is called the {\it transferred}
$\Ger^+_{\infty}$-algebra structure.
\end{remark}
\begin{remark}
\label{OCHA-transfer}
We would like to remark that OCHA structures can 
also be transferred across quasi-isomorphisms. 
The corresponding transfer theorem was proved 
in \cite[Section 4]{OCHAStasheff}. 
\end{remark}
\begin{proof}
The construction of the transferred $\Ger^+_{\infty}$-algebra
structure goes along the lines of the proof of Lemma 4.2.1
in \cite{Hinich-pro-Dimu}.

A $\Ger^+_{\infty}$-algebra structure on $(V',A')$ is given
by a degree $1$ codifferential $\hQ'$ on the $\bS$-coalgebra
$$
\bS(V',A')\,.
$$
Thus we need to build a coderivation $\hQ$ on the
$\bS$-coalgebra $\bS(V,A)$ together with an $\bS$-coalgebra
homomorphism
$$
\hT : \bS(V,A) \to \bS(V',A')
$$
such that $\hT$ extends (\ref{q-isos}) $\hQ$
extends the differential on $V\oplus A$
and
\begin{equation}
\label{nado-reshat}
\hQ^2 = 0\,, \qquad \hT \hQ = \hQ' \hT \,.
\end{equation}
Following V. Hinich \cite{Hinich-pro-Dimu} these equations
can be solved by induction on an increasing filtration
on the cooperad $\bS$\,. However, unlike in the proof
of Lemma 4.2.1 from \cite{Hinich-pro-Dimu} we cannot use
the filtration by arity because $\bS$ has a non-identity
unary operation $\rho$.

So we assign the following weights to the
elementary cooperations of $\bS$
\begin{equation}
\label{weights}
w(\rho)= 1\,, \qquad w(\D_{\ma}) =1\,, \qquad
w(\D_{\mc}) =2\,, \qquad  w(\de_{\mc}) = 2\,,
\end{equation}
where $\D_{\mc}$ (resp. $\de_{\mc}$) is the comultiplication
(resp. cobracket) of $\Ger^{\vee}$ and $\D_{\ma}$ is the
comultiplication of $\La\coass$\,.

Since all the defining relations of
the cooperad $\bS$ are homogenous with respect
to the weights (\ref{weights}) we use them to build
the desired increasing filtration $F^{\bul}\bS$ on $\bS$
\begin{equation}
\label{bS-filtr}
F^1 \bS \subset F^2 \bS \subset F^3 \bS \subset \dots\,.
\end{equation}
The cooperad $\bS$ is obviously cocomplete with respect
to this filtration. Furthermore, the filtration is compatible
with the coinsertions $\D_k$ of the cooperad $\bS$ in the following
sense\footnote{This is also known as the minimal model condition
\cite{Sullivan}.}
\begin{equation}
\label{minimal}
\D_k (\,F^n \bS\,) \subset \bigoplus_{p+q=n}
F^p \bS \otimes F^q \bS\,.
\end{equation}
It is this property that allows us to solve
equations (\ref{nado-reshat}) inductively following
the lines of the proof of Lemma 4.2.1 from \cite{Hinich-pro-Dimu}.
\end{proof}
\begin{remark}
\label{rem:cofibrant1}
The minimal model condition (\ref{minimal}) implies that
the operad $\Ger^+_{\infty}$ is cofibrant in the sense of 
Remark \ref{rem:cofibrant}.
\end{remark}
\begin{remark}
\label{rem:Laan}
The transfer theorem is a common feature of homotopy 
algebras. As far as we know, the first example of such a 
theorem was proved by T. Kadeishvili in \cite{Kadeishvili} for 
$A_{\infty}$-algebras. In this context, we would also like to mention 
a very interesting paper \cite{Laan} by P. van der Laan. In this paper
the author established koszulity for the
colored operad which governs operads. Using this 
result he introduced the notion of operads up to
homotopy and gave an elegant proof of the
transfer theorem for homotopy algebras.
\end{remark}

\subsection{$\Ger^+_{\infty}$-algebra structure on $(\Cbu(A,A), A)$}
\label{subsec:ona}

Let $A$ be an $A_{\infty}$-algebra.
Following D. Tamarkin \cite{Hinich-pro-Dimu}, \cite{Dima-Proof}
 the Hochschild
cochain complex $\Cbu(A,A)$ of $A$ carries a $\Ger_{\infty}$-algebra
structure with the following properties
\begin{itemize}

\item[{\bf H}] the induced Gerstenhaber algebra structure on
the cohomology groups $H^{\bul}(\Cbu(A,A))$ coincides with the one
given by the Gerstenhaber bracket and the cup-product.

\item[{\bf O}] all operations
of this $\Ger_{\infty}$-algebra structure are expressed in
terms of insertions of cochains into a cochain, insertions of
cochains into $A_{\infty}$ operations, and insertions of 
$A_{\infty}$-operations into a cochain.

\end{itemize}

\begin{remark}
Tamarkin's $\Ger_{\infty}$-algebra structure
on $\Cbu(A,A)$ can be obtained
in two ways. The first way is to combine a solution
of the Deligne conjecture \cite{BF}, \cite{KS}, \cite{M-Smith},
\cite{Dima-DG}, \cite{Sasha}
for Hochschild cochains with
the formality for the operad of little discs \cite{K1},
\cite{Volic}, \cite{Dima-Disk}. The second way is to
use the Etingof-Kazhdan theory \cite{EK} as in \cite{TT}.
Both constructions involve various choices.
For example, for both constructions we need to choose
Drinfeld's associator $\Phi$ \cite{Drinfeld}. Following D. Tamarkin \cite{Dima-GT},
different choices of the Drinfeld associator
give different homotopy classes of the $\Ger_{\infty}$-structure
on $\Cbu(A,A)$\,.
\end{remark}

Let us prove that
\begin{teo}
\label{Ger-plus-on-C-A}
Tamarkin's $\Ger_{\infty}$-algebra structure on
$\Cbu(A,A)$ extends to a $\Ger^+_{\infty}$-algebra structure
on the pair $(\Cbu(A,A), A)$\,.
\end{teo}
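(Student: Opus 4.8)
The plan is to apply Theorem \ref{thm:Ger-plus-alg} in reverse: since a $\Ger^+_{\infty}$-structure on a pair $(V,A)$ is precisely the data of a $\Ger_{\infty}$-structure on $V$, an $A_{\infty}$-structure on $A$, and an $L_{\infty}$-morphism from $V$ to $\Cbu(A,A)$, it suffices to produce such a triple with $V = \Cbu(A,A)$ and with the given $A_{\infty}$-structure $m$ on $A$. First I would take the $\Ger_{\infty}$-structure on $\Cbu(A,A)$ to be Tamarkin's one, whose existence and properties {\bf H}, {\bf O} were recalled above. Second, the $A_{\infty}$-structure on the second slot is simply the one we started with. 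The only nonobvious ingredient is the third one: an $L_{\infty}$-morphism
$$
\hU : \Cbu(A,A) \brarrow \Cbu(A,A)
$$
from the $\La\Lie_{\infty}$-algebra underlying Tamarkin's $\Ger_{\infty}$-structure on $\Cbu(A,A)$ to the $\La\Lie$-algebra $\Cbu(A,A)$ (with its standard bracket and Hochschild differential, as in Example \ref{OCHA-Cbu-A}).

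The key point is that the $\La\Lie_{\infty}$-structure obtained by restricting Tamarkin's $\Ger_{\infty}$-structure to $\La^2\cocomm(\Cbu(A,A)) \subset \Ger^{\vee}(\Cbu(A,A))$ has its binary bracket equal to the Gerstenhaber bracket and its unary term equal to the Hochschild differential $\pa^{Hoch} = [m, \cdot]_G$ — this is forced by property {\bf H} together with the normalization conventions for Tamarkin's construction. Hence the leading (linear) component of the sought $L_{\infty}$-morphism can be taken to be the identity map $\id : \Cbu(A,A) \to \Cbu(A,A)$, and the task is to extend $\id$ to a full $L_{\infty}$-morphism by filling in higher Taylor components $U_k$, $k \ge 2$. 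I would construct these by obstruction theory: recall from \cite{Erratum}, \cite{Shoikhet} that $L_{\infty}$-morphisms from the $\La\Lie_{\infty}$-algebra $\Cbu(A,A)$ to the $\La\Lie$-algebra $\Cbu(A,A)$ are Maurer--Cartan elements of the DG Lie algebra $\cH = \bs\Hom(\bs^2\tS(\bs^{-2}\Cbu(A,A)), \Cbu(A,A))$. The element with only a linear term equal to $\id$ satisfies the Maurer--Cartan equation modulo the quadratic part of $\cH$; the failure to satisfy it exactly is a cocycle, and since every higher bracket of Tamarkin's structure is built out of the insertion operations described in {\bf O}, one checks that this obstruction cocycle is in fact a coboundary and proceeds inductively. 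Concretely, since the $\La\Lie$-algebra $\Cbu(A,A)$ receives the tautological identity morphism and Tamarkin's structure is a genuine $\Ger_{\infty}$-deformation of the Gerstenhaber bracket, the higher components $U_k$ can simply be read off from Tamarkin's $\Ger_{\infty}$-operations themselves.

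The cleanest way to package this, which I would ultimately prefer, is to observe that the $L_{\infty}$-morphism we need is already implicit in Tamarkin's construction via the explicit description of $\Ger_{\infty}$-operations in {\bf O}. Indeed, the operations that take several Hochschild cochains and a number of elements of $A$ and insert everything into a single cochain (as allowed by {\bf O}) assemble precisely into a map
$$
U(c_1, \dots, c_k)(a_1, \dots, a_n)
$$
which is the desired $Q^{\ma}$-component in the notation of the proof of Theorem \ref{thm:Ger-plus-alg}; the $A_{\infty}$-operations inserted into a cochain supply the cross terms relating $U$ to $m$. Setting $Q^{\mc}$ equal to Tamarkin's $\Ger_{\infty}$-structure on $\Cbu(A,A)$, setting $m$ equal to the $A_{\infty}$-structure on $A$, and setting $U$ as just described, one verifies that the coherence equations \eqref{UandQ} hold — this is where property {\bf O} does the real work, since it guarantees that the various insertion operations satisfy exactly the quadratic identities encoded in $\hQ^2 = 0$. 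By the converse direction of Theorem \ref{thm:Ger-plus-alg} this triple defines a codifferential $\hQ$ on $\bS(\Cbu(A,A), A)$, i.e.\ a $\Ger^+_{\infty}$-algebra structure on the pair, whose $\mc$-component is by construction Tamarkin's $\Ger_{\infty}$-structure. The main obstacle is bookkeeping rather than conceptual: one must check that Tamarkin's operations, organized according to how many inputs are cochains versus elements of $A$ and how insertions are nested, land exactly in the three families $Q^{\mc}$, $m$, $U$ and satisfy \eqref{UandQ} with the correct signs dictated by \eqref{ve-la}--\eqref{ve-la-tp}; property {\bf O} is precisely the hypothesis that makes this possible, so no further choices beyond those already present in Tamarkin's construction are needed.
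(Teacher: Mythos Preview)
Your overall strategy is right: invoke Theorem~\ref{thm:Ger-plus-alg} and supply the triple. The first two pieces are indeed free. The gap is in your construction of the third piece, the $L_{\infty}$-morphism from $\Cbu(A,A)$ with its Tamarkin $\La\Lie_{\infty}$-structure to $\Cbu(A,A)$ with its standard $\La\Lie$-structure.

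You take the identity as linear part and then propose to fill in higher components $U_k$, $k\ge 2$, either by obstruction theory or by ``reading off'' operations from property~{\bf O}. Neither argument is complete. Property~{\bf H} only constrains the induced operations on cohomology; it does \emph{not} force the chain-level binary bracket in Tamarkin's structure to equal $[\,,\,]_G$, nor does it kill possible higher $\La\Lie_{\infty}$ brackets. So the obstruction cocycle you describe need not be a coboundary on general grounds, and you give no mechanism for trivializing it. Your alternative via property~{\bf O} conflates two different things: Tamarkin's $\Ger_{\infty}$-operations have arity $(\mc,\dots,\mc)\to\mc$, whereas the $Q^{\ma}$-components you want have arity $(\mc,\dots,\mc,\ma,\dots,\ma)\to\ma$; property~{\bf O} describes how the former are built from insertions, but does not hand you the latter.

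The paper sidesteps all of this with a single citation: by Theorem~2 of \cite{DTT}, the $\La\Lie_{\infty}$-part of Tamarkin's $\Ger_{\infty}$-structure on $\Cbu(A,A)$ \emph{coincides on the nose} with the standard $\La\Lie$-structure given by $\pa^{Hoch}$ and $[\,,\,]_G$ (no higher brackets). Hence the identity map is already a strict $L_{\infty}$-morphism, and no higher $U_k$ are needed at all. Once you know this fact, the proof is one line; without it, your inductive construction has no leverage.
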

\begin{proof}
According to Theorem \ref{thm:Ger-plus-alg} we need to construct
a $\Ger_{\infty}$-algebra structure on $\Cbu(A,A)$, an $A_{\infty}$-algebra
structure on $A$ and an $L_{\infty}$-morphism from
$\Cbu(A,A)$ to the $\La\Lie$-algebra  $\Cbu(A,A)$\,.

The first two ingredients are already at hand.
So we set the last one to be the identity
map
\begin{equation}
\label{id}
\id : \Cbu(A,A) \to \Cbu(A,A)\,.
\end{equation}
We would like to emphasize that the $\La\Lie_{\infty}$-algebra
structure on the source of (\ref{id}) comes from
Tamarkin's $\Ger_{\infty}$-algebra structure while the
target of (\ref{id}) carries the $\La\Lie$-algebra
structure given by the Hochschild differential (\ref{Hoch})
and the Gerstenhaber bracket (\ref{Gerst}).
Due to Theorem\footnote{In paper \cite{DTT}, Theorem 2 was 
proved for the case of (strict) associative algebra $A$ only. 
However it is not hard to extend the proof from \cite{DTT} to the
case of an arbitrary $A_{\infty}$-algebra using the same 
degree bookkeeping argument. }
2 from \cite{DTT} these two $\La\Lie_{\infty}$-algebra
structures on $\Cbu(A,A)$ coincide. Hence,
the map (\ref{id}) is indeed an
 $L_{\infty}$-morphism from the $\La\Lie_{\infty}$-algebra $\Cbu(A,A)$
to the $\La\Lie$-algebra of Hochschild cochains of $A$\,.

\end{proof}

\begin{remark}
\label{rem:OCHA-Cbu-A}
It is easy to see that the $\Ger^+_{\infty}$ structure on the pair $(\Cbu(A,A), A)$ 
extends the tautological OCHA structure from Example \ref{OCHA-Cbu-A}.
\end{remark}

Let us describe the $\Ger_{\infty}^+$-algebra structure
from Theorem \ref{Ger-plus-on-C-A}
explicitly in the case when $A$ is a plain associative
algebra concentrated in degree $0$.

We need to define a codifferential $\hQ$ on
the $\bS$-coalgebra $\bS(\Cbu(A,A), A)$\,. This codifferential
is uniquely determined by its composition
$$
Q = p\circ \hQ
$$
with the projection
$$
p\,:\,  \bS(\Cbu(A,A), A) \to
\Cbu(A,A) \oplus A\,.
$$

The $\mc$-component $Q^{\mc}$ of $Q$
\begin{equation}
\label{mc-Q}
Q^{\mc}: \Ger^{\vee}(\Cbu(A,A)) \to \Cbu(A,A)
\end{equation}
is Tamarkin's $\Ger_{\infty}$-algebra structure
on $\Cbu(A,A)$ and the $\ma$-component is
defined as follows.

For $a_1, \dots, a_n \in A$ and
$P, P_1, \dots, P_k \in \Cbu(A,A)$ we set
\begin{equation}
\label{ma-Q0}
Q^{\ma} (a_1, a_2, \dots, a_n) =
\begin{cases}
- a_1 a_2 \,, {\rm if} ~ n = 2\,, \cr
0\,, ~ {\rm otherwise}\,,
\end{cases}
\end{equation}
\begin{equation}
\label{ma-Q1}
Q^{\ma} (P_1, P_2 \dots, P_k ;a_1, a_2, \dots, a_n) = 0
\end{equation}
if $k > 1 $ and
\begin{equation}
\label{ma-Q11}
Q^{\ma} (P; a_1, a_2, \dots, a_n) =
\begin{cases}
P(a_1, \dots, a_n) \,, {\rm if} ~ |P| = n \,, \cr
0\,, ~ {\rm otherwise}\,.
\end{cases}
\end{equation}
In particular, for $ b \in C^0(A)$ we have
\begin{equation}
\label{ma-Q11-0}
Q^{\ma} (b) = b\,,
\end{equation}
where, in the right hand side, $b$ is viewed as an element
of $A$\,.

\section{The formality quasi-isomorphism for $\Cbu(A,A)$ via transfer
of the $\Ger^+_{\infty}$-structure}
\label{sect:via-transfer}
Let $A$ be a commutative unital algebra over the field $\bbK$
and let
\begin{equation}
\label{V-bul}
V^{\bul}_A = \wedge^{\bul}_A \Der(A)
\end{equation}
be the graded algebra of polyvector fields on $Spec(A)$\,.
It is known that $V^{\bul}(A)$ is a Gerstenhaber algebra.
The multiplication is simply the exterior product and
the Lie bracket is the Schouten-Nijenhuis bracket $[\,,\,]_{SN}$\,.

Let
\begin{equation}
\label{hQ-V-A}
\hQ_H : \bS(V^{\bul}_A, A) \to  \bS(V^{\bul}_A, A)
\end{equation}
be a $\Ger^+_{\infty}$-structure on the pair
$(V^{\bul}_A, A)$\,. As above we denote by
$Q_H$ the composition $p \circ \hQ_H$ of $\hQ_H$ with
the natural projection:
$$
p : \bS(V^{\bul}_A, A) \to  V^{\bul}_A \oplus A\,.
$$
We also denote by $Q^{\mc}_H$ and $Q^{\ma}_H$ the
$\mc$-component and $\ma$-component, respectively
$$
Q^{\mc}_H : \Ger^{\vee}(V^{\bul}_A) \to V^{\bul}_A\,,
$$
$$
Q^{\ma}_H : \bs\,  \big( \tS(\bs^{-2} V^{\bul}_A) \otimes   T(\bs^{-1}A)  \big)
\, \oplus  \, \bs \, \tT(\bs^{-1} A) \to A\,.
$$

Theorem \ref{thm:Ger-plus-alg} implies that
\begin{cor}
\label{Ger-plus-gives-F}
If a $\Ger^+_{\infty}$ algebra structure (\ref{hQ-V-A})
on the pair $(V^{\bul}_A, A)$ satisfies these two properties:
\begin{itemize}

\item[{\bf G}] the $\Ger_{\infty}$-structure on $V^{\bul}_A$  is the
standard Gerstenhaber algebra structure given by the Schouten-Nijenhuis 
bracket and the exterior product;

\item[{\bf A}] the $A_{\infty}$-algebra structure on $A$ is
the original associative algebra structure on $A$

\end{itemize}
then the formula
\begin{equation}
\label{formula}
F(\ga_1, \dots, \ga_k) (a_1, \dots, a_n) =
Q_H^{\ma}(\ga_1, \dots, \ga_k ; a_1, \dots, a_n)
\end{equation}
defines an $L_\infty$-morphism from the $\La\Lie$-algebra
$V^{\bul}_A$ to the $\La\Lie$-algebra $\Cbu(A,A)$\,.

Conversely, if
\begin{equation}
\label{hF}
\hF : V^{\bul}_A \brarrow \Cbu(A,A)
\end{equation}
is an $L_{\infty}$-morphism
from the $\La\Lie$-algebra $V^{\bul}_A$ to $\Cbu(A,A)$ then
the formula (\ref{formula}) (with $F= p \circ \hF$)
defines a $\Ger^+_{\infty}$-structure on the pair $(V^{\bul}_A, A)$
satisfying the above properties {\bf G} and {\bf A}.
\end{cor}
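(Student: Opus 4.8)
The plan is to deduce this directly from Theorem~\ref{thm:Ger-plus-alg}, which identifies a $\Ger^+_{\infty}$-structure on a pair $(V,A)$ with a triple consisting of a $\Ger_{\infty}$-structure on $V$, an $A_{\infty}$-structure on $A$, and an $L_{\infty}$-morphism from $V$ to $\Cbu(A,A)$, the correspondence being given on structure maps by (\ref{m}) and (\ref{define-U}). Under this dictionary, properties {\bf G} and {\bf A} are precisely the statements that the first two members of the triple are the prescribed ones, and formula (\ref{formula}) is just the rewriting of (\ref{define-U}) with $V = V^{\bul}_A$ and $Q_H$ in place of $Q$. So the content of the corollary is that, once {\bf G} and {\bf A} are imposed, the only remaining degree of freedom in $\hQ_H$ is the $L_{\infty}$-morphism $\hF$.

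First I would check that property {\bf G} really determines the $\mc$-component $\hQ_H^{\mc}$ uniquely: since $V^{\bul}_A$ is an honest Gerstenhaber algebra, it carries a canonical $\Ger$-algebra structure, and pulling back along the projection $\Cobar(\Ger^{\vee}) \to \Ger$ produces the codifferential $\hQ_H^{\mc}$ on $\Ger^{\vee}(V^{\bul}_A)$ whose only nonzero Taylor components are the exterior product and the Schouten--Nijenhuis bracket. Restricting $\hQ_H^{\mc}$ to the sub-coalgebra $\La^2\cocomm(V^{\bul}_A) \subset \Ger^{\vee}(V^{\bul}_A)$, as in the proof of Theorem~\ref{thm:Ger-plus-alg}, then yields exactly the $\La\Lie$-algebra structure on $V^{\bul}_A$ given by $[\,,\,]_{SN}$, with no higher brackets. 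Likewise, property {\bf A} forces the restriction of $Q_H^{\ma}$ to $\tT(\bs^{-1}A)$ to be the strict associative product $m$, so that, by the computation recalled in the proof of Theorem~\ref{thm:Ger-plus-alg}, the target $\La\Lie_{\infty}$-algebra $\Cbu(A,A)$ collapses to the $\La\Lie$-algebra of Hochschild cochains of the ordinary associative algebra $A$, with differential (\ref{Hoch}) for $m$ and bracket (\ref{Gerst}). Hence the third member of the triple is an $L_{\infty}$-morphism between the two \emph{$\La\Lie$-algebras} in the statement, and (\ref{formula}) defines it; this proves the first assertion.

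For the converse I would run the construction of the proof of Theorem~\ref{thm:Ger-plus-alg} backwards. Given an $L_{\infty}$-morphism $\hF$ as in (\ref{hF}) with $F = p\circ\hF$, set $Q_H^{\mc}$ to be the standard-Gerstenhaber codifferential described above, set $Q_H^{\ma}|_{\tT(\bs^{-1}A)}$ to be the strict associative $m$, and define $Q_H^{\ma}$ on the mixed part $\bs\,(\tS(\bs^{-2}V^{\bul}_A)\otimes T(\bs^{-1}A))$ by (\ref{formula}). The last paragraph of the proof of Theorem~\ref{thm:Ger-plus-alg} shows that these data assemble into a genuine degree-$1$ codifferential $\hQ_H$ on $\bS(V^{\bul}_A,A)$: the identity $\hQ_H^{\ma}\circ\hQ_H^{\ma} = 0$ is equivalent to the $L_{\infty}$-relations (\ref{UandQ}) for $\hF$, which hold by hypothesis, while the remaining components of $\hQ_H^2 = 0$ are the Jacobi/associativity relations on $V^{\bul}_A$ and $A$, which hold by {\bf G} and {\bf A}. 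By construction the resulting $\Ger^+_{\infty}$-structure satisfies {\bf G} and {\bf A}.

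I do not expect any serious obstacle: the corollary is essentially the specialization of Theorem~\ref{thm:Ger-plus-alg} to $V = V^{\bul}_A$ and $A$ the polynomial algebra, with {\bf G} and {\bf A} removing the first two pieces of the triple. The only point needing a line of justification is the degeneration noted above — that the standard Gerstenhaber $\Ger_{\infty}$-structure contributes no higher $\La\Lie_{\infty}$ operations on $V^{\bul}_A$, and that the strict associative structure makes the relevant $\La\Lie_{\infty}$-structure on $\Cbu(A,A)$ honestly a $\La\Lie$-algebra — so that ``$L_{\infty}$-morphism from the $\La\Lie$-algebra $V^{\bul}_A$ to the $\La\Lie$-algebra $\Cbu(A,A)$'' in the statement means the same thing as ``$L_{\infty}$-morphism from $V^{\bul}_A$ to $\Cbu(A,A)$'' in Theorem~\ref{thm:Ger-plus-alg}.
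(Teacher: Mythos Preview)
Your proposal is correct and follows the same approach as the paper: the corollary is stated there as an immediate consequence of Theorem~\ref{thm:Ger-plus-alg}, with no further argument given. Your write-up simply spells out in more detail why properties {\bf G} and {\bf A} force the $\La\Lie_{\infty}$-structures on $V^{\bul}_A$ and $\Cbu(A,A)$ to be strict $\La\Lie$-algebras, which is exactly the point needed to match the language of the corollary with that of Theorem~\ref{thm:Ger-plus-alg}.
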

We will denote the $\Ger^+_{\infty}$-structure on the
pair $(V^{\bul}_A, A)$ corresponding to an $L_{\infty}$-morphism
(\ref{hF}) by $\hQ_{\hF}$\,.

\begin{remark} 
\label{rem:OCHA-and-Kontsevich}
An OCHA analogue of Corollary  \ref{Ger-plus-gives-F} was 
already proved in \cite{OCHAStasheff1}. In fact, J. Kajiura and 
J. Stasheff considered in \cite{OCHAStasheff1} the OCHA structure on the pair    
$( V^{\bul}_A, A)$ for $A = \bbR[x^1, \dots, x^d]$ corresponding 
to Kontsevich's formality quasi-isomorphism from \cite{K}\,. 
\end{remark}

Let us now restrict our consideration  to the case
when $A$ is a regular commutative algebra.
(In other words, for every prime ideal $\mp\subset A$
the local ring $A_{\mp}$ is regular.)
In this case,
the Hochschild-Kostant-Rosenberg theorem \cite{HKR}
tells us that
the Hochschild cohomology $HH^{\bul}(A,A)$ is isomorphic
to the Gerstenhaber algebra (\ref{V-bul}) of polyvector
fields on $\Spec(A)$\,.

For regular commutative algebras we also
have the following theorem.
\begin{teo}[Theorem 4, \cite{DTT}]
\label{thm:nasha}
Let $\cD$ be Tamarkin's $\Ger_{\infty}$-structure on $\Cbu(A,A)$\,.
Then for every regular commutative algebra $A$ (over $\bbK$)
there exists a $\Ger_{\infty}$ quasi-isomorphism
\begin{equation}
\label{hG}
\hG : V^{\bul}_A \brarrow \Cbu(A,A)
\end{equation}
from the Gerstenhaber algebra $V^{\bul}_A$ of polyvector
fields to the $\Ger_{\infty}$-algebra $(\Cbu(A,A), \cD)$\,.
\end{teo}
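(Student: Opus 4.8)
The plan is to build the quasi-isomorphism $\hG$ by Tamarkin-style obstruction theory, relying on the fact that $\Cbu(A,A)$ and $V^{\bul}_A$ carry $\La\Lie$-algebra structures whose cohomologies agree via Hochschild--Kostant--Rosenberg. The starting point is the linear map $\hG_1 = \HKR : V^{\bul}_A \to \Cbu(A,A)$, which is a quasi-isomorphism of complexes and — because $A$ is regular commutative — a quasi-isomorphism of the underlying $\La\Lie$-algebras on the level of cohomology, intertwining the Schouten--Nijenhuis bracket with the Gerstenhaber bracket on $HH^{\bul}(A,A)$. One then tries to extend $\hG_1$ to a full $\Ger_{\infty}$-morphism $\hG$ by solving the coherence equations order by order in the weight filtration on the cooperad $\Ger^{\vee}$ (equivalently, by arity of the cooperations of $\Ger^{\vee}$). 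At each stage the failure of the partially-constructed morphism to be compatible with the codifferentials is a cocycle in an appropriate deformation complex, and the obstruction to continuing lives in a cohomology group of that complex.

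The second step, and the conceptual heart of the argument, is to identify this obstruction space and show it vanishes. Here one invokes Tamarkin's rigidity result: the relevant piece of the deformation complex controlling $\Ger_{\infty}$-morphisms out of the formal (i.e.\ cohomology) Gerstenhaber algebra into $\Cbu(A,A)$ has vanishing cohomology in the obstruction degree. Concretely, the obstructions sit in $H^{\bul}$ of a bicomplex built from $\Hom$ of (co)free $\Ger^{\vee}$-pieces applied to $V^{\bul}_A$ into $\Cbu(A,A)$; using property {\bf H} of Tamarkin's structure (the induced Gerstenhaber structure on cohomology is the standard one), this cohomology is computed by the Gerstenhaber-algebra cohomology of $V^{\bul}_A$ with coefficients in itself, which for a regular commutative algebra is known to be concentrated in the right degrees so that the obstruction class is forced to be zero (or a coboundary, i.e.\ can be absorbed into a correction of $\hG$). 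This is exactly the content of Theorem \ref{thm:rigidity} / the rigidity statement referenced in the introduction, and I would quote it rather than reprove it.

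The third step is bookkeeping: one checks that the inductive corrections assemble into a genuine $\Ger_{\infty}$-morphism $\hG : V^{\bul}_A \brarrow \Cbu(A,A)$ whose linear term is $\HKR$, hence a $\Ger_{\infty}$ \emph{quasi}-isomorphism since the linear term is already a quasi-isomorphism of complexes. One must also be a little careful about the grading conventions — the binary bracket has degree $-1$, so the ``$\La$'' shifts have to be tracked through the degree bookkeeping, but this is routine and parallels the argument in \cite{DTT} referenced in the footnote.

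The main obstacle I expect is the second step: pinning down the obstruction complex precisely and matching it to a cohomology that Tamarkin's rigidity theorem actually controls. The subtlety is that $\Ger_{\infty}$-morphisms are not the same as morphisms of the underlying $\La\Lie_{\infty}$-algebras, so one needs the full multiplicative (not just Lie) part of the homotopy Gerstenhaber structure, and the vanishing has to be established for the graded-commutative product together with the bracket. Getting the deformation-theoretic setup right so that property {\bf H} can be fed in cleanly — and so that the known computation of Gerstenhaber cohomology of $V^{\bul}_A$ applies — is where the real work lies; everything else is a standard cofibrant-replacement / obstruction-theory induction of the kind that the transfer theorem (Theorem \ref{thm:transfer}) already exemplifies in this paper.
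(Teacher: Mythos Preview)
The paper does not prove this theorem at all: it is stated as a quotation of Theorem~4 from \cite{DTT} and used as a black box. There is therefore no ``paper's own proof'' to compare against; your sketch is an attempt to reconstruct the argument behind the cited result.

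That said, your outline has a genuine gap. You repeatedly invoke Theorem~\ref{thm:rigidity} (Tamarkin's rigidity) to kill the obstructions, but in this paper that theorem is stated only for $A=\bbK[x^1,\dots,x^d]$, whereas Theorem~\ref{thm:nasha} is asserted for an \emph{arbitrary} regular commutative algebra over $\bbK$. The vanishing of the relevant Gerstenhaber--algebra cohomology of $V^{\bul}_A$ that you appeal to in step two is precisely what rigidity gives you in the polynomial case; for a general regular $A$ you have not justified it, and in fact it need not hold in the same form. The argument in \cite{DTT} (see also \cite{Hinich-pro-Dimu}) handles the general regular case, but not simply by citing the polynomial rigidity statement --- so your reduction ``I would quote it rather than reprove it'' does not go through as written.

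A smaller point: your description of the obstruction space is too loose. The obstructions to extending a partial $\Ger_{\infty}$-morphism do not straightforwardly sit in ``Gerstenhaber-algebra cohomology of $V^{\bul}_A$ with coefficients in itself''; one has to identify the deformation complex more carefully (it is built from $\Hom(\Ger^{\vee}(V^{\bul}_A),\Cbu(A,A))$ with a convolution-type differential), and the passage from that complex to something computable is itself nontrivial. Your step three is fine in spirit, but the real content --- and the place where regularity of $A$ is actually used --- is hidden in the vanishing claim you have not pinned down.
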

Due to Theorem 2 from \cite{DTT} the $\La\Lie_{\infty}$-part
of the $\Ger_{\infty}$-structure $\cD$ on $\Cbu(A,A)$ coincides
with the standard $\La\Lie$-algebra structure on $\Cbu(A,A)$\,.
Therefore, restricting the $\Ger_{\infty}$-quasi-isomorphism $\hG$ to
the cocommutative coalgebra
$$
\La^2\cocomm ( V^{\bul}_A) = \bs^2\, \tS(\bs^{-2} V^{\bul}_A )
$$
we get an $L_{\infty}$ quasi-isomorphism
\begin{equation}
\label{L-infty}
\hG \Big|_{\bs^2 S(\bs^{-2} V^{\bul}_A )}
 \bs^2 S(\bs^{-2} V^{\bul}_A ) \to  \bs^2 S(\bs^{-2} \Cbu(A,A))
\end{equation}
from the $\La\Lie$-algebra $V^{\bul}_A$ to the $\La\Lie$-algebra $\Cbu(A,A)$.

The following theorem addresses the question of whether
a given $L_{\infty}$-morphism $\hF$ (\ref{hF}) is homotopic
to the $L_{\infty}$ quasi-isomorphism (\ref{L-infty}).

\begin{teo}
\label{Teorema}
Let $\hQ_{\hF}$ be the $\Ger^+_{\infty}$-structure
on the pair $(V^{\bul}_A, A)$ corresponding
to an $L_{\infty}$-morphism $\hF$ (\ref{hF}).
Furthermore, let $\hQ$ be the $\Ger^+_{\infty}$-structure
on the pair $(\Cbu(A,A), A)$ which extends Tamarkin's
$\Ger_{\infty}$-structure $\cD$ on $\Cbu(A,A)$ via
Theorem \ref{Ger-plus-on-C-A}. Then the $L_{\infty}$-morphism
$\hF$ is homotopic to the $L_{\infty}$ quasi-isomorphism
(\ref{L-infty}) if and only if the $\Ger^+_{\infty}$-algebras
$(V^{\bul}_A, A, \hQ_{\hF})$ and $(\Cbu(A,A), A, \hQ)$
are quasi-isomorphic.
\end{teo}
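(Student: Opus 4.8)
The plan is to deduce the equivalence from the combination of Theorem \ref{thm:homotopies}, Tamarkin's rigidity Theorem \ref{thm:nasha}, and the bookkeeping built into Theorem \ref{thm:Ger-plus-alg}. The point is that both $\Ger^+_{\infty}$-structures $\hQ_{\hF}$ and $\hQ$ live over the \emph{same} $\Ger_{\infty}$-data in the $\mc$-color and the \emph{same} $A_{\infty}$-data in the $\ma$-color once one passes to the correct homotopy model; only the $L_{\infty}$-morphism part (the mixed $\ma$-component $Q^{\ma}$ evaluated on at least one closed input) distinguishes them. So the comparison of the two $\Ger^+_{\infty}$-algebras should reduce to the comparison of the two $L_{\infty}$-morphisms $\hF$ and $\hG|_{\bs^2 S(\bs^{-2} V^{\bul}_A)}$.

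First I would prove the ``only if'' direction. Suppose $(V^{\bul}_A, A, \hQ_{\hF})$ and $(\Cbu(A,A), A, \hQ)$ are $\Ger^+_{\infty}$ quasi-isomorphic. The restrictions of a $\Ger^+_{\infty}$ quasi-isomorphism to the two colors are, by the discussion after Definition \ref{dfn:Ger-plus-q-iso}, quasi-isomorphisms of cochain complexes; moreover the $\mc$-part of the quasi-isomorphism is a $\Ger_{\infty}$ quasi-isomorphism from the Gerstenhaber algebra $V^{\bul}_A$ (with properties {\bf G}) to $(\Cbu(A,A),\cD)$. By Tamarkin's rigidity Theorem \ref{thm:nasha} there is such a $\Ger_{\infty}$ quasi-isomorphism $\hG$, and (the rigidity in) Theorem \ref{thm:nasha}, together with Theorem 2 of \cite{DTT}, tells us that \emph{any} two such $\Ger_{\infty}$ quasi-isomorphisms are homotopic; hence the $\mc$-part of our quasi-isomorphism is homotopic to $\hG$. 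Using this homotopy one adjusts the given $\Ger^+_{\infty}$ quasi-isomorphism by a coderivation so that the new quasi-isomorphism is the identity on both colors and compatible with the codifferentials; this puts us precisely in the situation of Theorem \ref{thm:homotopies}, with two $\Ger^+_{\infty}$-structures on $(\Cbu(A,A), A)$ having coinciding $\Ger_{\infty}$- and $A_{\infty}$-data and related by (\ref{conjugated}) with $\hpsi$ satisfying (\ref{condition-V}), (\ref{condition-A}). Theorem \ref{thm:homotopies} then yields that the corresponding $L_{\infty}$-morphisms are homotopic, i.e. that $\hF$ (transported along $\hG$) is homotopic to the identity $L_{\infty}$-morphism of $\Cbu(A,A)$, which is exactly the assertion that $\hF$ is homotopic to (\ref{L-infty}).

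For the ``if'' direction, suppose $\hF$ is homotopic to $\hG|_{\bs^2 S(\bs^{-2} V^{\bul}_A)}$. Consider the $\Ger_{\infty}$ quasi-isomorphism $\hG$ of Theorem \ref{thm:nasha}; by Corollary \ref{Ger-plus-gives-F} it upgrades to a $\Ger^+_{\infty}$ quasi-isomorphism $\hQ_{\hG}$ on $(V^{\bul}_A, A)$ matching properties {\bf G}, {\bf A}, and by construction (see the explicit formulas following Theorem \ref{Ger-plus-on-C-A}) $\hG$ induces a $\Ger^+_{\infty}$ quasi-isomorphism $(V^{\bul}_A, A, \hQ_{\hG}) \brarrow (\Cbu(A,A), A, \hQ)$ that is $\hG$ on the $\mc$-color, the identity on the $\ma$-color, and whose mixed components encode the compositions recording that $\hG$ intertwines the $L_{\infty}$-morphism $\hG|_{\cdots}$ with the identity morphism. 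So it suffices to show $(V^{\bul}_A, A, \hQ_{\hF})$ and $(V^{\bul}_A, A, \hQ_{\hG})$ are $\Ger^+_{\infty}$ quasi-isomorphic; but these two structures have identical $\Ger_{\infty}$- and $A_{\infty}$-data (properties {\bf G}, {\bf A}) and differ only in the $L_{\infty}$-morphism, which by hypothesis are homotopic, so Theorem \ref{thm:homotopies} (the ``if'' part) produces a coderivation $\hpsi$ with (\ref{condition-V}), (\ref{condition-A}) realizing (\ref{conjugated}), and Remark \ref{rem:homot-q-iso} turns this into the desired $\Ger^+_{\infty}$ quasi-isomorphism $\exp(\hpsi)$. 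Composing the chain $(V^{\bul}_A, A, \hQ_{\hF}) \to (V^{\bul}_A, A, \hQ_{\hG}) \to (\Cbu(A,A), A, \hQ)$ finishes the argument.

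The main obstacle I expect is the reduction step in the ``only if'' direction: turning an \emph{arbitrary} $\Ger^+_{\infty}$ quasi-isomorphism into one that is normalized (the identity, or at least strict, on both colors and with $\hpsi$ satisfying the color-by-color vanishing conditions), so that Theorem \ref{thm:homotopies} applies verbatim. This requires invoking the rigidity of Theorem \ref{thm:nasha} in the sharper form ``the space of $\Ger_{\infty}$ quasi-isomorphisms $V^{\bul}_A \brarrow (\Cbu(A,A),\cD)$ is homotopically connected,'' and then lifting a $\mc$-color homotopy to a homotopy of the full $\Ger^+_{\infty}$ quasi-isomorphism using cofibrancy of $\Ger^+_{\infty}$ (Remark \ref{rem:cofibrant1}) and the minimal-model condition (\ref{minimal}); the rest is the degree and sign bookkeeping already set up in the proof of Theorem \ref{thm:Ger-plus-alg}.
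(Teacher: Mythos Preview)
Your direction assuming $\hF$ is homotopic to (\ref{L-infty}) and producing a $\Ger^+_{\infty}$ quasi-isomorphism is essentially the paper's argument (though you have swapped the labels ``if'' and ``only if''): the paper reduces via Theorem \ref{thm:homotopies} and Remark \ref{rem:homot-q-iso} to the case $\hF = \hG|_{\bs^2\tS(\bs^{-2}V^{\bul}_A)}$ and then writes down the same explicit $\Ger^+_{\infty}$ quasi-isomorphism with $\mc$-component $\hG$ and $\ma$-component the identity on $A$.

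The other direction, however, has a genuine gap. Your plan is to normalize an arbitrary $\Ger^+_{\infty}$ quasi-isomorphism $\hT$ so that Theorem \ref{thm:homotopies} applies, and for this you invoke ``any two $\Ger_{\infty}$ quasi-isomorphisms $V^{\bul}_A \brarrow (\Cbu(A,A), \cD)$ are homotopic''. That is neither Theorem \ref{thm:nasha} (which asserts only existence) nor Theorem \ref{thm:rigidity} (which concerns $\Ger_{\infty}$-\emph{structures} on $V^{\bul}_A$, and is stated only for polynomial $A$, whereas Theorem \ref{Teorema} is for arbitrary regular $A$). You flag this obstacle yourself, but the fix you sketch (cofibrancy, lifting homotopies) does not supply the missing uniqueness statement.

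The paper avoids this issue entirely and never compares $\hT^{\mc}$ to a fixed $\hG$. Instead it shows directly that $\hF$ is homotopic to the $L_{\infty}$-restriction of $\hT^{\mc}$ itself, by an explicit induction on the number $k$ of closed inputs in the unfolded $\ma$-equation $Q^{\ma}\circ\hT^{\ma} = T^{\ma}\circ\hQ^{\ma}_{\hF}$. Because $A$ is concentrated in degree $0$ with zero differential, one has $T^{\ma}(a_1)=a_1$ and $T^{\ma}(a_1,\dots,a_n)=0$ for $n>1$; a degree count then collapses the equation at level $k$ to
\[
T^{\mc}(\ga_1,\dots,\ga_k) \;=\; F(\ga_1,\dots,\ga_k) \;+\; [m,\,\psi_k(\ga_1,\dots,\ga_k)]_G\,,
\]
where $\psi_k$ is built from the mixed components of $T^{\ma}$. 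Replacing $\hF$ by a homotopic $L_{\infty}$-morphism kills $\psi_k$ and achieves $T^{\mc}=F$ on $\le k$ inputs; the induction continues. No rigidity is needed, and the argument works for any regular $A$.
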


\begin{proof}
We start with the part ``only if''.

Let $\hF$ be homotopic to
the $L_{\infty}$-quasi-isomorphism (\ref{L-infty}).
Due to
Theorem \ref{thm:homotopies} and Remark \ref{rem:homot-q-iso}
we may assume, without loss of generality, that
$\hF$ coincides with an $L_{\infty}$-quasi-isomorphism (\ref{L-infty}).
In other words,
\begin{equation}
\label{hG-hF}
\hG \Big |_{\bs^{2}\, \tS(\bs^{-2} V^{\bul}_A) } = \hF\,.
\end{equation}

Then we define a morphism
$\hT$ from the $\bS$-coalgebra
$\bS(V^{\bul}_A, A)$ to $\bS(\Cbu(A,A), A)$
by setting
\begin{equation}
\label{hT-V}
\hT \Big|_{\Ger^{\vee}(V^{\bul}_A)} = \hG
\end{equation}
and
\begin{equation}
\label{T-ma}
T^{\ma}(\ga_1, \dots, \ga_k, a_1, \dots, a_n)=
\begin{cases}
a_1 ~~{\rm if}~ ~ k = 0 ~{\rm and}~ n = 1\,, \cr
0\,, ~ {\rm otherwise}\,,
\end{cases}
\end{equation}
where
$$
T^{\ma} : \bs \, \big(\tS (\bs^{-2} V^{\bul}_A) \,\otimes\, T(\bs^{-1} A)  \big)
\,\oplus \, \bs \tT(\bs^{-1}A) \to A
$$
is the $\ma$-component of the composition $T = p \circ \hT$\,.

Equations (\ref{hG-hF}) and (\ref{hT-V}) imply that
\begin{equation}
\label{hF-hT-mc}
F (\ga_1, \dots, \ga_k) = T^{\mc} (\ga_1, \dots, \ga_k)
\end{equation}
for all $\ga_i \in V^{\bul}_A$\,. Here $T^{\mc}$ denotes
the $\mc$-component of the composition $T = p \circ \hT$
and $F$ is the composition $F = p \circ \hF$\,.

We claim that $\hT$ is a
$\Ger^{+}_{\infty}$ quasi-isomorphism from
$\Ger^{+}_{\infty}$-algebra $(V^{\bul}_A ,A, \hQ_{\hF})$ to
the $\Ger^{+}_{\infty}$-algebra $(\Cbu(A,A), A, \hQ)$\,.
In other words,
\begin{equation}
\label{hT-Q-QF}
\hQ \circ \hT = \hT \circ \hQ_{\hF}\,.
\end{equation}
Indeed, the $\mc$-component of this equation
$$
\hQ \circ \hT \Big|_{\Ger^{\vee}(V)} =
\hT \circ \hQ_{\hF} \Big|_{\Ger^{\vee}(V)}
$$
holds because $\hG$ is a
$\Ger_{\infty}$-quasi-isomorphism.

To show that the $\ma$-component of (\ref{hT-Q-QF})
holds it suffices to prove that
\begin{equation}
\label{p-hT-Q-QF}
Q^{\ma} \circ \hT^{\ma}
(\ga_1, \dots, \ga_k; a_1, a_2, \dots, a_n)
= T^{\ma} \circ \hQ^{\ma}_{\hF}
(\ga_1, \dots, \ga_k; a_1, a_2, \dots, a_n)\,.
\end{equation}
The latter equation is a consequence of
(\ref{ma-Q0}), (\ref{ma-Q1}), (\ref{ma-Q11}), (\ref{T-ma}),
and (\ref{hF-hT-mc}).

The part ``if'' requires more work.

Let $\hT: \bS(V^{\bul}_A, A) \to \bS(\Cbu(A,A),A)$ be
a morphism of $\bS$-coalgebras compatible with
the codifferentials $\hQ_{\hF}$ on $ \bS(V^{\bul}_A, A)$
and $\hQ$ on $\bS(\Cbu(A,A),A)$\,. As above $T$ is the
composition $p \circ \hT$ with the natural projection
$p : \bS(\Cbu(A,A),A) \to \Cbu(A,A) \oplus A$ and $T^{\mc}$
and $T^{\ma}$ are the $\mc$-component and the
$\ma$-component of $T$, respectively.

Since the $\mc$-component $\hT^{\mc}$
$$
\hT^{\mc} : \Ger^{\vee}(V^{\bul}_A) \to  \Ger^{\vee}(\Cbu(A,A))
$$
is a $\Ger_{\infty}$ quasi-isomorphism
from $V^{\bul}_A$ to the $\Ger_{\infty}$-algebra
$(\Cbu(A,A), \cD)$, we simply need to show that
$\hF$ (\ref{hF}) is homotopic to the
$L_{\infty}$ quasi-isomorphism
\begin{equation}
\label{L-infty-T}
\hT^{\mc}\Big|_{\bs^2 \, \tS(\bs^{-2} V^{\bul}_A)} :
\bs^2 \, \tS(\bs^{-2} V^{\bul}_A) \to
 \bs^2 \, \tS(\bs^{-2} \Cbu(A,A))\,.
\end{equation}

Since $A$ has the zero differential and
it is concentrated in the single degree $0$
we have
\begin{equation}
\label{T-a}
T^{\ma}(a_1) = a_1
\end{equation}
and
\begin{equation}
\label{T-aaa}
T^{\ma}(a_1, \dots, a_n) = 0
\end{equation}
for all $n > 1$ and $a_i \in A$\,.

Unfolding the $\ma$-component of the equation
$\hQ \circ \hT = \hT \circ \hQ_{\hF} $ we get
\begin{equation}
\label{p-hT-Q-QF1}
Q^{\ma} \circ \hT^{\ma}
(\ga_1, \dots, \ga_k; a_1, a_2, \dots, a_n)
=
\end{equation}
$$
\sum_{p =1}^{k}\sum_{\la  \in \Sh_{p,k-p}}
(-1)^{\ve_{\la}} T^{\ma}(Q^{\mc}_{\hF} (\ga_{\la(1)}, \dots,
\ga_{\la(p)}), \ga_{\la(p+1)},
\dots, \ga_{\la(k)} ; a_1, \dots, a_n ) +
$$
$$
\sum_{1\le t \le s \le n}^{0 \le p \le k}
\sum_{\la \in \Sh_{p,k-p}}
(-1)^{\ve_{\la} + \ve^{\la}_{t,p}}
T^{\ma} (\ga_{\la(1)}, \dots, \ga_{\la(p)} ;
a_1, \dots, Q^{\ma}_{\hF}(\ga_{\la(p+1)},
\dots, \ga_{\la(k)}; a_{t}, \dots, a_{s} ) \dots, a_n ) +
$$
$$
\sum_{1 \le t \le n+1}^{0 \le p \le k-1}
\sum_{\la \in \Sh_{p,k-p}}
(-1)^{\ve_{\la} + \ve^\la_{t,p}} T^{\ma} (\ga_{\la(1)}, \dots, \ga_{\la(p)}, ;
a_1, \dots, a_{t-1}, Q^{\ma}_{\hF}(\ga_{\la(p+1)},
\dots, \ga_{\la(k)}), a_{t}, \dots, a_n )
$$
where
\begin{equation}
\label{ve-la-ga}
\ve_{\la} = \sum_{i<j ~ \la(i) > \la(j)}
|\ga_i||\ga_j |\,,
\end{equation}
\begin{equation}
\label{ve-la-tp-ga}
\ve^{\la}_{t,p} = \sum_{j=1}^{p} |\ga_{\la(j)}| +
\sum_{j=p+1}^{k} \sum_{l=1}^{t-1} |\ga_{\la(j)}| (|a_l|+1) +
\sum_{l=1}^{t-1} (|a_l|+1)\,.
\end{equation}

In the case $k=1$ equation (\ref{p-hT-Q-QF1}) reduces to
$$
- T^{\ma}(\ga, a_1, \dots, a_{n-1}) a_n
- (-1)^{|\ga|} a_1 T^{\ma}(\ga, a_2, \dots, a_n)
+ T^{\mc}(\ga)(a_1, \dots, a_n)
=
$$
\begin{equation}
\label{k-equals-1}
F(\ga)(a_1, \dots, a_n) -
(-1)^{|\ga|}  T^{\ma}(\ga, a_1 a_2, a_3, \dots, a_n)
+
(-1)^{|\ga|}  T^{\ma}(\ga, a_1, a_2 a_3, \dots, a_n) + \dots
\end{equation}
$$
+ (-1)^{|\ga| + n-1}  T^{\ma}(\ga, a_1, \dots, a_{n-2}, a_{n-1} a_n)\,.
$$
Here we use (\ref{ma-Q0}), (\ref{ma-Q1}), (\ref{ma-Q11}),
(\ref{T-a}), and (\ref{T-aaa}).

A simple degree bookkeeping shows that if $|\ga|\neq n$
then all terms of equation
(\ref{k-equals-1}) are identically zero.

Thus equation (\ref{k-equals-1}) is equivalent to
\begin{equation}
\label{k-equals-1-equiv}
T^{\mc}(\ga) = F(\ga) + [m, \psi_1(\ga)]_G
\end{equation}
where $m$ is the multiplication in $A$ and
$\psi_1$ is a map from $V^{\bul}_A$ to $\Cbu(A,A)$
of degree $-1$ defined by the formula
$$
\psi_1(\ga)(a_1, a_2, a_3, \dots, a_{|\ga|-1}) =
 T^{\ma}(\ga, a_1, a_2, a_3, \dots, a_{|\ga|-1})\,.
$$

Equation (\ref{k-equals-1-equiv}) implies that
$F$ is an $L_{\infty}$ quasi-isomorphism.
Furthermore, substituting $F$ by a homotopic
 $L_{\infty}$ quasi-isomorphism we can ``kill'' $\psi_1$\,.

In other words, with loss of generality, we may assume that
$\psi_1=0$ and hence
\begin{equation}
\label{T-for-k-equal-1}
 T^{\ma}(\ga, a_1, a_2, a_3, \dots, a_{s}) = 0\,,
\end{equation}
and
\begin{equation}
\label{T-versus-F-1}
T^{\mc}(\ga) = F(\ga)\,.
\end{equation}
for all $\ga\in V^{\bul}_A$ and $a_i\in A$\,.

Let us assume, by induction, that
for all $k' < k$
\begin{equation}
\label{T-for-k-prime}
 T^{\ma}(\ga_1, \dots, \ga_{k'}; a_1, \dots, a_{n}) = 0\,,
\end{equation}
and
\begin{equation}
\label{T-versus-F-k-prime}
T^{\mc}(\ga_1, \dots, \ga_{k'}) = F(\ga_1, \dots, \ga_{k'})
\end{equation}
for all $\ga_i\in V^{\bul}_A$ and $a_j \in A$\,.

Then equation (\ref{p-hT-Q-QF1}) reduces to
$$
 -T^{\ma}(\ga_1, \dots, \ga_k ; a_1, \dots, a_{n-1}) a_n
 - (-1)^{|\ga_1|+ \dots + |\ga_k|}
 a_1 T^{\ma}(\ga_1, \dots, \ga_k ; a_2, \dots, a_n) +
$$
\begin{equation}
\label{k-equals-k}
T^{\mc}(\ga_1, \dots, \ga_k)(a_1, \dots, a_n)  =
F(\ga_1, \dots, \ga_k)(a_1, \dots, a_n) -
\end{equation}
$$
(-1)^{|\ga_1|+ \dots + |\ga_k|}
\big( T^{\ma}(\ga_1, \dots, \ga_k ; a_1 a_2, a_3, \dots, a_n) -
T^{\ma}(\ga_1, \dots, \ga_k ; a_1, a_2 a_3, \dots, a_n) + \dots
$$
$$
 + T^{\ma}(\ga_1, \dots, \ga_k ; a_1, \dots, a_{n-2}, a_{n-1} a_n) \big)\,.
$$
Here we use (\ref{ma-Q0}), (\ref{ma-Q1}), (\ref{ma-Q11}) and
(\ref{T-for-k-prime}).

Again, a simple degree bookkeeping shows that
if $n  \neq |\ga_1|+ \dots + |\ga_k| + 2 - 2k$ then
all terms of equation (\ref{k-equals-k}) are identically
zero.

Thus equation (\ref{k-equals-k}) is equivalent to
\begin{equation}
\label{k-equals-k-equiv}
T^{\mc}(\ga_1, \dots, \ga_k) = F(\ga_1, \dots, \ga_k )
+ [m, \psi_k(\ga_1, \dots, \ga_k )]_G
\end{equation}
where $m$ is the multiplication in $A$ and
$\psi_k$ is a map from $S^{k}(V^{\bul}_A)$ to $\Cbu(A,A)$
of degree $1-2k$ defined by the formula
$$
\psi_k(\ga_1, \dots, \ga_k )(a_1, \dots, a_{n-1}) =
 T^{\ma}(\ga_1, \dots, \ga_k ; a_1, \dots, a_{n-1})
$$
and $n= |\ga_1|+ \dots + |\ga_k| + 2-2k$\,.

Combining this observation with the inductive
assumption (\ref{T-versus-F-k-prime}) we see that
substituting $\hF$ by a homotopic $L_{\infty}$ quasi-isomorphism
we can ``kill''  $\psi_k$ and hence adjust $\hT$ in such
a way that
\begin{equation}
\label{T-for-k}
 T^{\ma}(\ga_1, \dots, \ga_{k}, a_1, \dots, a_{n}) = 0\,,
\end{equation}
and
\begin{equation}
\label{T-versus-F-k}
T^{\mc}(\ga_1, \dots, \ga_{k}) = F(\ga_1, \dots, \ga_{k})
\end{equation}
for all $\ga_i\in V^{\bul}_A$ and $a_j \in A$\,.

This argument completes the step of the induction and
hence the proof of the theorem.

\end{proof}

Let us now restrict our consideration to the case
when $A$ is the polynomial algebra
$$
A=\bbK[x^1, \dots, x^d]
$$
in $d$ variables.
In this case the Gerstenhaber
algebra structure on $V^{\bul}_A$ satisfies the following
rigidity property:
\begin{teo}[Tamarkin's rigidity, \cite{Hinich-pro-Dimu}, \cite{Dima-Proof}]
\label{thm:rigidity}
Let $\bbK$ be a field of characteristic zero.
If $A=\bbK[x^1, x^2, \dots, x^d]$ then any $\Ger_{\infty}$-algebra
structure on $V^{\bul}_A$ with the binary operations
$\wedge$ and $[\,,\,]_{SN}$ is homotopy equivalent to the Gerstenhaber
algebra $(V^{\bul}_A, \wedge, [\,,\,]_{SN})$\,.
\end{teo}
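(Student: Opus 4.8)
\section*{Proof proposal for Theorem~\ref{thm:rigidity}}

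The plan is to prove rigidity by deformation theory. Since $\Ger_{\infty} = \Cobar(\Ger^{\vee})$ and, by the dictionary recalled in Section~\ref{sect:Ger-plus}, a $\Ger_{\infty}$-structure on $V := V^{\bul}_A$ is the same as a degree~$1$ codifferential on the cofree $\Ger^{\vee}$-coalgebra $\Ger^{\vee}(V)$, a $\Ger_{\infty}$-structure whose binary part is the strict pair $(\wedge,[\,,\,]_{SN})$ is precisely a Maurer--Cartan element $\mu$ of the convolution DG Lie algebra $\mg_V = \Hom(\overline{\Ger^{\vee}}(V),V)$ restricted to the part cogenerated in arity $\ge 3$, the differential of $\mg_V$ being the one induced by the strict Gerstenhaber structure. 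Moreover two such structures are homotopy equivalent exactly when the corresponding Maurer--Cartan elements are gauge equivalent. Thus Theorem~\ref{thm:rigidity} is equivalent to the assertion that every Maurer--Cartan element of $\mg_V$ is gauge-trivial, i.e.\ that $V^{\bul}_A$ is \emph{intrinsically formal} as a $\Ger_{\infty}$-algebra.

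First I would set up the obstruction-theoretic induction. The DG Lie algebra $\mg_V$ carries a complete and exhaustive filtration by arity, and one establishes gauge-triviality of $\mu$ by induction along this filtration, exactly along the lines of the argument used for Lemma~4.2.1 in \cite{Hinich-pro-Dimu} (already invoked above in the proof of Theorem~\ref{thm:transfer}): at each stage the arity-$n$ discrepancy between $\mu$ and the strict structure is a cocycle of $\mg_V$ of the relevant degree, and if such cocycles are always coboundaries it can be removed by a degree~$0$ gauge transformation; the convergent composite of these gauges is then the desired homotopy equivalence onto $(V^{\bul}_A,\wedge,[\,,\,]_{SN})$. Hence everything reduces to the vanishing, in positive degrees, of the operadic (Gerstenhaber) deformation cohomology $H^{\bul}_{\Ger}(V^{\bul}_A, V^{\bul}_A)$, i.e.\ to acyclicity of $\mg_V$ in positive degrees.

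Next I would compute this cohomology using regularity of $A$. For $A=\bbK[x^1,\dots,x^d]$ the Gerstenhaber algebra $V^{\bul}_A = \wedge^{\bul}_A\Der(A) = \bbK[x^1,\dots,x^d]\otimes\wedge(\xi_1,\dots,\xi_d)$ is the \emph{free} graded-commutative algebra on generators $x^i$ in degree~$0$ and $\xi_i$ in degree~$1$, and $[\,,\,]_{SN}$ is the unique biderivation extending the perfect pairing $\langle x^i,\xi_j\rangle=\delta^i_j$; equivalently, $V^{\bul}_A$ is the algebra of functions on the odd symplectic affine space $T^{*}[1]\,\Spec(A)$. Since $\Ger$ is Koszul with $\Ger^{\vee}=\La^2\Ger^{*}$ (Section~\ref{sect:prelim}), $H^{\bul}_{\Ger}(V,V)$ is computed by the deformation complex $\Hom(\overline{\Ger^{\vee}}(V),V)$ with the Koszul differential twisted by $[\,,\,]_{SN}$, and I would prove it acyclic in positive degrees by producing an explicit contracting homotopy on that part: a K\"unneth reduction brings one to the case $d=1$, where the contraction is assembled from the Euler operator together with the interior product against the symplectic form $\sum dx^i\wedge d\xi_i$, the two pieces conjugating the two "halves" of the Koszul differential. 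This is precisely Tamarkin's argument \cite{Hinich-pro-Dimu}, \cite{Dima-Proof}.

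I expect this third step --- the vanishing $H^{\ge 1}_{\Ger}(V^{\bul}_A, V^{\bul}_A)=0$ --- to be the main obstacle: writing a genuine contracting homotopy for the full bigraded Gerstenhaber deformation complex of polyvector fields on affine space, and checking its compatibility with all the operadic signs, is the technical heart and is not formal. Once this acyclicity is in place, the inductive gauge argument of the second step closes the proof, so that any $\Ger_{\infty}$-structure on $V^{\bul}_A$ with binary operations $\wedge$ and $[\,,\,]_{SN}$ is homotopy equivalent to $(V^{\bul}_A,\wedge,[\,,\,]_{SN})$.
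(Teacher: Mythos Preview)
The paper does not prove Theorem~\ref{thm:rigidity} at all: immediately after the statement it simply says ``For the proof of this theorem we refer the reader to Section~5.4 of nice exposition \cite{Hinich-pro-Dimu} by V.~Hinich.'' So there is no in-paper argument to compare against; the theorem is quoted from the literature and used as a black box.

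Your proposal is a faithful outline of exactly the Tamarkin--Hinich argument that the paper cites: recast $\Ger_{\infty}$-structures with prescribed binary part as Maurer--Cartan elements in the convolution DG Lie algebra $\Hom(\overline{\Ger^{\vee}}(V),V)$, reduce homotopy triviality to vanishing of the Gerstenhaber deformation cohomology $H^{\ge 1}_{\Ger}(V^{\bul}_A,V^{\bul}_A)$ via the standard filtered obstruction induction, and then establish that vanishing by an explicit contracting homotopy exploiting that $V^{\bul}_A=\bbK[x^i,\xi_i]$ is the functions on an odd symplectic affine space. This is the same strategy as in \cite{Hinich-pro-Dimu}, Section~5.4, and you correctly identify the acyclicity computation as the technical heart. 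One small caution: the ``K\"unneth reduction to $d=1$'' is not literally how Hinich organizes the computation---the contracting homotopy there is written directly for general $d$ using the full Euler field and the odd symplectic pairing---so if you flesh this out you should either check carefully that the deformation complex really tensor-factors, or follow Hinich and build the homotopy globally.
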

For the proof of this theorem we refer the reader
to Section 5.4 of nice exposition \cite{Hinich-pro-Dimu}
by V. Hinich.

Applying Theorem \ref{thm:transfer}
to the $\Ger^+_{\infty}$-algebra structure
on the pair $(\Cbu(A,A), A)$ we get a transferred $\Ger^+_{\infty}$-algebra
structure on the cohomology $(V^{\bul}_A, A)$\,.

This $\Ger^+_{\infty}$-algebra structure automatically satisfies Property {\bf A}
from Corollary \ref{Ger-plus-gives-F}. Indeed, this fact
follows easily from the observation that $A$ is concentrated
in the single degree $0$\,.

Property {\bf G} from Corollary \ref{Ger-plus-gives-F}
is not satisfied in general. However, using Theorem \ref{thm:rigidity}
we can adjust the transferred
$\Ger^+_{\infty}$-algebra structure $\hQ_H$ on
$(V^{\bul}_A, A)$ so that it will satisfy Property {\bf G}.
It is clear that Property {\bf A} will still be satisfied
for this new $\Ger^+_{\infty}$-algebra structure on $(V^{\bul}_A, A)$\,.

Thus we obtained a $\Ger^+_{\infty}$-algebra structure $\hQ_H$
on the pair $(V^{\bul}_A, A)$ which satisfies Properties
{\bf G} and {\bf A} from Corollary \ref{Ger-plus-gives-F}\,.
In other words, $\hQ_H$ defines (and is defined by)
an $L_{\infty}$-morphism
$$
\hF : V^{\bul}_A \brarrow \Cbu(A,A)\,.
$$
In addition, the $\Ger_{\infty}^+$-algebra $(V^{\bul}_A, A, \hQ_H)$
is quasi-isomorphic to $(\Cbu(A,A), A, \hQ)$\,.
Thus Theorem \ref{Teorema} implies the following
corollary
\begin{cor}
\label{cor:transfer-gives}
Let $A=\bbK[x^1, \dots, x^d]$\,. If
$$
\hF : V^{\bul}_A \brarrow \Cbu(A,A)\,.
$$
is an $L_{\infty}$-morphism obtained, as above, via
transfer of the $\Ger^+_{\infty}$-structure on
$(\Cbu(A,A),A)$ to the cohomology then $\hF$ is
homotopic to an $L_{\infty}$ quasi-isomorphism
constructed via Theorem \ref{thm:nasha}.
\end{cor}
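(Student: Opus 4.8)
The plan is to derive the corollary as an immediate application of Theorem~\ref{Teorema}, once the $\Ger^+_\infty$-structure $\hQ_H$ of the paragraph preceding the statement has been assembled with enough care. First I would apply Theorem~\ref{thm:transfer} to the $\Ger^+_\infty$-algebra $(\Cbu(A,A),A,\hQ)$ supplied by Theorem~\ref{Ger-plus-on-C-A}, using a quasi-isomorphism $V^{\bul}_A\to\Cbu(A,A)$ inducing the Hochschild-Kostant-Rosenberg isomorphism on cohomology together with the identity $A\to A$. This yields a transferred $\Ger^+_\infty$-structure $\hQ^{(0)}_H$ on the pair $(V^{\bul}_A,A)$ together with a $\Ger^+_\infty$ quasi-isomorphism $(V^{\bul}_A,A,\hQ^{(0)}_H)\brarrow(\Cbu(A,A),A,\hQ)$. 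Because $A$ sits in degree $0$ with zero differential, the only $A_\infty$-structure the transfer can put on $A$ is the original associative product, so $\hQ^{(0)}_H$ satisfies Property~{\bf A} automatically.

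Next I would arrange Property~{\bf G}. The $\mc$-component $\cD^{(0)}$ of $\hQ^{(0)}_H$ is a $\Ger_\infty$-structure on $V^{\bul}_A$ whose binary operations are those $\cD$ induces on $HH^{\bul}(A,A)$, and these equal $\wedge$ and $[\,,\,]_{SN}$ by Property~{\bf H} and the Hochschild-Kostant-Rosenberg isomorphism. Tamarkin's rigidity (Theorem~\ref{thm:rigidity}) then provides a $\Ger_\infty$-isomorphism $\phi$ from $(V^{\bul}_A,\cD^{(0)})$ to the standard Gerstenhaber algebra $(V^{\bul}_A,\wedge,[\,,\,]_{SN})$. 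I would promote $\phi$ to an automorphism $\hat\Phi$ of the cofree $\bS$-coalgebra $\bS(V^{\bul}_A,A)$ that restricts to $\phi$ on $\Ger^{\vee}(V^{\bul}_A)$ and to $\id$ on $A$, and set $\hQ_H:=\hat\Phi\,\hQ^{(0)}_H\,\hat\Phi^{-1}$. Then $\hQ_H$ has the standard Gerstenhaber $\mc$-component (Property~{\bf G}); it still satisfies Property~{\bf A}, since $\hat\Phi$ acts as the identity on $\tT(\bs^{-1}A)$; and, being conjugate to $\hQ^{(0)}_H$ through the $\Ger^+_\infty$ quasi-isomorphism $\hat\Phi$, the $\Ger^+_\infty$-algebra $(V^{\bul}_A,A,\hQ_H)$ remains quasi-isomorphic to $(\Cbu(A,A),A,\hQ)$ (compose $\hat\Phi^{-1}$ with the transfer quasi-isomorphism).

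It then remains only to read off the conclusion. By Corollary~\ref{Ger-plus-gives-F}, the structure $\hQ_H$ is exactly the datum of an $L_\infty$-morphism $\hF\colon V^{\bul}_A\brarrow\Cbu(A,A)$, and this is the morphism ``obtained via transfer'' in the statement of the corollary; moreover $\hQ_{\hF}=\hQ_H$. Since $(V^{\bul}_A,A,\hQ_{\hF})$ and $(\Cbu(A,A),A,\hQ)$ are quasi-isomorphic, Theorem~\ref{Teorema} shows that $\hF$ is homotopic to the $L_\infty$ quasi-isomorphism~(\ref{L-infty}), i.e.\ to the $L_\infty$ quasi-isomorphism obtained by restricting $\hG$, which is precisely an $L_\infty$ quasi-isomorphism built from Theorem~\ref{thm:nasha}. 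The one step that really needs attention is the construction in the middle paragraph: confirming that the rigidity adjustment can be realised by a coalgebra automorphism of the \emph{pair} that disturbs neither the $A_\infty$-structure on $A$ nor the existence of a quasi-isomorphism to the Hochschild side. This is exactly where the degree-$0$ concentration of $A$, together with the homotopy-invariance statements in Remark~\ref{rem:homot-q-iso} and Theorem~\ref{thm:homotopies}, carry the argument.
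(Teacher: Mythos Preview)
Your proposal is correct and follows essentially the same route as the paper: transfer the $\Ger^+_\infty$-structure, observe Property~{\bf A} for free from degree reasons, invoke Tamarkin's rigidity to force Property~{\bf G}, and then feed the resulting quasi-isomorphism into Theorem~\ref{Teorema}. The paper leaves the ``adjustment via rigidity'' step as a one-line remark, whereas you spell it out as conjugation by an $\bS$-coalgebra automorphism $\hat\Phi$ extending the $\Ger_\infty$-equivalence $\phi$ trivially on the $A$-side; this is exactly the mechanism the paper has in mind, so your extra detail is a faithful unpacking rather than a different argument.
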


\section{A Link between $\Ger^+_{\infty}$ and Voronov's Swiss Cheese
operad $\SC$}
\label{sect:SC-Ger-plus}

In this section we describe a link between the DG operad
$\Ger^+_{\infty}$ and the Fulton-MacPherson
version $\SC$ of Voronov's Swiss Cheese operad \cite{Sasha-SC}.

Let us first describe the $2$-colored topological operad $\SC$\,.

\subsection{The Swiss Cheese operad $\SC$}
\label{subsect:SC}
To define the spaces of the $2$-colored operad $\SC$ we  introduce 
the configuration space $\Conf_k$ of $k$ labeled distinct
points on the complex plane $\bbC$. Similarly, we introduce
the configuration space $\Conf_{k,n}$  of $k+n$ labeled distinct points  on 
$\bbC$ such that the first $k$ points lie on the 
upper half plane 
\begin{equation}
\label{upper-half}
\bH = \{ z \in \bbC ~|~ {\rm Im} (z)  > 0 \} 
\end{equation}
and the last $n$ points lie 
on the real line $\bbR$.  Next, we define the space $C_k$ as the quotient 
of $\Conf_k$ by the action of the $3$-dimensional real Lie group 
$$
z \mapsto a z + b\,, \qquad \quad a\in \bbR,~ a > 0\,, \quad b \in \bbC \,.    
$$
Similarly, $C_{k,n}$ is the quotient of $\Conf_{k,n}$ by the action 
of the $2$-dimensional real Lie group: 
$$
z \mapsto a z + b\,, \qquad \quad a\in \bbR,~ a > 0\,, \quad b \in \bbR \,.    
$$

The spaces $\SC^{\mc}(k,n)$ are empty whenever $n\ge 1$\,,
\begin{equation}
\label{SC-mc}
\SC^{\mc}(k,0) : = \bC_k\,,
\end{equation}
and
\begin{equation}
\label{SC-ma}
\SC^{\ma}(k,n) : = \bC_{k,n}\,,
\end{equation}
where $\bC_k$ and $\bC_{k,n}$ are respectively compactifications  
of $C_k$ and $C_{k,n}$ constructed by M. Kontsevich in \cite[Section 5]{K}.  

The spaces $\bC_k$ and $\bC_{k,n}$  are obtained 
from $C_k$ and $C_{k,n}$, respectively,  \`a la 
Fulton-MacPherson by performing real blow-ups along all partial 
diagonals.\footnote{Building $\bC_{k,n}$ we should also perform 
real blow-ups along the pieces of the boundary which correspond 
to the situation when a cluster of points in the upper half plane
approaches the real line $\bbR$\,.}

These spaces are  manifolds with corners and
open strata of $\bC_k$ 
are in correspondence with $k$-labeled planar trees. 
More precisely, given a $k$-labeled planar tree $T$
with the set of vertices $V(T)$,
a stratum $C_T$ corresponding to $T$ is 
isomorphic to the following product
\begin{equation}
\label{C-T}
C_T =  \prod_{v \in V(T)\setminus \{{\rm leaves}\}  }  C_{k(v)}\,, 
\end{equation}
where $k(v)$ is the number of incoming\footnote{We tacitly assume that 
all trees are rooted and hence oriented: each vertex has exactly one outgoing 
edge and may have several incoming edges; a vertex with no incoming edges is called 
a leaf.}  edges of the vertex $v$\,.  
The planar tree $T$ tells us in which order we need to perform blow-ups 
to get the stratum $C_T$\,. 
 
The correspondence between planar trees and strata of $\bC_k$ is not bijective. 
For example, if two $k$-labeled planar trees $T$ and $T'$ give the 
same $k$-labeled tree then we identify the strata $C_T$ and $C_{T'}$
by relabeling points in components $C_{k(v)}$, $v \in  V(T)\setminus \{{\rm leaves}\}$
according to the difference in the planar structures of $T$ and $T'$\,.

To describe strata of $\bC_{k,n}$ we need to use 2-colored 
planar trees.  A {\it 2-colored  planar tree} $T$ is a planar 
tree $T$ together with a map 
$$
\chi : E(T) \to \{\mc, \ma\}
$$
which assigns to each edge $e\in E(T)$ a color $\mc$ or 
a color $\ma$\,. We require that the total order $\{\mc  < \ma\}$
on the set $\{\mc, \ma\}$ is compatible with the planar structure
in the following sense: {\it for every vertex $v$ of the tree $T$ the 
restriction of the map $\chi$ to the set $E_{in}(v)$ of edges terminating 
at the vertex $v$ is order preserving}. In other words, if we walk around the vertex $v$ in the
clockwise direction starting with the outgoing edge
then we first cross all the edges in $E_{in}(v)$ with the color  $\mc$  
and then all the edges in $E_{in}(v)$ with the color $\ma$\,.
For an illustration, see the left picture on figure \ref{fig:exam}.
\begin{figure}[htp] 
\centering
\hspace{0.1\linewidth}
\begin{minipage}[t]{0.4\linewidth}
\psset{unit=0.8cm}
\begin{pspicture}(2,2)
\psline[linestyle=dashed](1,0)(1,1)
\psline[linestyle=dashed](1,1)(2,2)
\psline[linestyle=dashed](1,1)(1.5,2)
\psline(1,1)(0,2)
\psline(1,1)(0.5,2)
\psline(1,1)(1,2)
\pscircle*(1,1){0.08}
\end{pspicture} 
\end{minipage} 
\begin{minipage}[t]{0.4\linewidth}
\psset{unit=0.8cm}
\begin{pspicture}(4,4)
\psline[linestyle=dashed](2,0)(2,1)
\psline[linestyle=dashed](2,1)(3,2)
\psline[linestyle=dashed](3,2)(3,3)
\psline[linestyle=dashed](3,2)(3.5,3)
\psline(2,1)(1,2)
\psline(1,2)(0.5,3)
\psline(1,2)(1.5,3)
\psline(3,2)(2.5,3)
\pscircle*(2,0){0.08}
\pscircle*(2,1){0.08}
\pscircle*(1,2){0.08}
\pscircle*(0.5,3){0.08}
\pscircle*(1.5,3){0.08}
\pscircle*(3,2){0.08}
\pscircle*(2.5,3){0.08}
\pscircle*(3,3){0.08}
\pscircle*(3.5,3){0.08}
\rput(0.5,3.4){$3$}
\rput(1.5,3.4){$1$}
\rput(2.5,3.4){$2$}
\rput(3,3.4){$5$}
\rput(3.5,3.4){$4$}
\end{pspicture} 
\end{minipage}
\caption{Dashed edges have the color $\ma$, solid edges have the color $\mc$}  
\label{fig:exam}
\end{figure}

For our purposes we also impose the following 
condition on a 2-colored  planar tree $T$: 
\begin{cond}
\label{cond:ma-ma}
If at least one incoming edge of a vertex $v$ has the color $\ma$ then 
its outgoing edge has the color $\ma$.
\end{cond}

For every 2-colored planar tree $T$ the set $V(T)$ of vertices 
splits into the disjoint union according to the 
color of the outgoing edge 
$$
V(T) = V_{\mc}(T) \sqcup V_{\ma}(T)\,.  
$$
In other words, 
a vertex $v$ belongs to $V_{\mc}(T)$ (resp. $V_{\ma}(T)$) if 
its outgoing edge has the color $\mc$ (resp. the color $\ma$).
The above condition implies that for every vertex $v\in V_{\mc}(T)$
all its incoming edges have the color $\mc$\,.

A $(k,n)$-labeled 2-colored planar tree $T$  
is a 2-colored planar tree $T$ equipped with a bijection between 
the set of leaves and the set $\{1,2, \dots, k+n\}$ such that 
the external edges corresponding to the first $k$ leaves have 
the color $\mc$ and the external edges corresponding to the 
last $n$ leaves have  the color $\ma$\,.    
For an example of a $(3,2)$-labeled 2-colored planar tree see 
the right picture on figure \ref{fig:exam}.  

We can now describe open strata of $\bC_{k,n}$\,.
A stratum $C_T$ of $\bC_{k,n}$ 
corresponding to a $(k,n)$-labeled 2-colored planar tree $T$  
is the following product
\begin{equation}
\label{C-T-ma}
C_T =  \prod_{v \in V_{\mc}(T)\setminus \{{\rm leaves}\}  }  C_{k(v)} 
~~ \times ~~   \prod_{v \in V_{\ma}(T)\setminus \{{\rm leaves}\}  }  C_{k(v), n(v)}\,, 
\end{equation}
where $k(v)$ (resp. $n(v)$) denotes the number of incoming edges with color $\mc$
(resp. with color $\ma$)\,. 

\begin{remark}
\label{rem:amendment}
Condition \ref{cond:ma-ma} implies that the root 
edge of every  $(k,n)$-labeled 2-colored planar 
tree $T$ with $n\neq 0$
has necessarily the color $\ma$\,. 
In the case $n=0$ we require, in addition, that 
the root of $T$ in (\ref{C-T-ma})
has the color $\ma$\,. For example, 
we may have a $(3,0)$-labeled 2-colored 
planar tree depicted on figure \ref{fig:possible}.
\begin{figure}[htp] 
\centering 
\psset{unit=0.8cm}
\begin{pspicture}(2,3)
\psline[linestyle=dashed](1,0)(1,1)
\psline(1,1)(2,2)
\psline(1,1)(1,2)
\psline(1,1)(0,2)
\pscircle*(1,0){0.08}
\pscircle*(1,1){0.08}
\pscircle*(2,2){0.08}
\pscircle*(1,2){0.08}
\pscircle*(0,2){0.08}
\rput(0,2.4){$1$} 
\rput(1,2.4){$3$}
\rput(2,2.4){$2$}
\end{pspicture}
\caption{The dashed edge has the color $\ma$, solid edges have the color $\mc$}  
\label{fig:possible}
\end{figure}
\end{remark}

As well as for $\bC_k$
the correspondence between planar 2-colored trees and strata 
of $\bC_{k,n}$ is not bijective. 
For example, if two $(k,n)$-labeled 2-colored planar trees $T$ and $T'$ give the 
same $(k,n)$-labeled 2-colored tree then we identify the strata $C_T$ and $C_{T'}$
by relabeling points in components $C_{k(v)}$, $v \in  V_{\mc}(T)\setminus \{{\rm leaves}\}$
and $C_{k(v), n(v)}$, $v \in  V_{\ma}(T)\setminus \{{\rm leaves}\}$
according to the difference in the planar structures of $T$ and $T'$\,.

The operadic insertions for $\SC$ are defined in the obvious way 
using concatenation of trees and identification of the corresponding strata. 

Following this description, it is not hard to see that, in the category of sets, 
$\SC$  is the free $2$-colored operad 
generated by the collection $\{C_k, C_{k,n}\}$ where 
$C_k$'s give us operations of arity
$$
(\underbrace{\mc, \dots, \mc}_{k} ~ \mapsto ~ \mc )
$$
 and $C_{k,n}$'s give us operations 
of arity
$$
(\underbrace{\mc, \dots, \mc}_{k}, 
\underbrace{\ma, \dots, \ma}_{n} ~ \mapsto ~ \ma )\,.
$$ 
 
\subsection{The operad  $H_{\bul}(\SC)$ and the cooperads $H^{\bul}(\SC)$ and $\sc$}
\label{subsect:S-versus-sc}
The homology operad $H_{\bul}(\SC)$ was described in 
\cite{Sasha-SC}: 
\begin{teo}[A. Voronov, \cite{Sasha-SC}]
Algebras over $H_{\bul}(\SC)$ are triples $(V,A, \vr)$, 
where $V$ is a Gerstenhaber algebra, $A$ is an associative 
algebra and  
$$
\vr : V \to A
$$ 
is a homomorphism of associative algebras satisfying 
the condition 
\begin{equation}
\label{vr-center}
\vr(v) a= (-1)^{|v| |a|} a \vr(v)
\end{equation}
for all $v\in V$ and $a\in A$\,.
\end{teo}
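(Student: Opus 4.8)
The plan is to identify the homology operad $H_{\bul}(\SC)$ by generators and relations and to match it with the operad $\cP$ whose algebras are the triples $(V,A,\vr)$ of the statement. The closed‑color part is classical: since $\bC_k\simeq C_k\simeq\Conf_k(\bbR^2)$ one has an isomorphism of operads $H_{\bul}(\bC_k)\cong\Ger(k)$ (Arnold, F.~Cohen, Getzler--Jones), which in the reversed homological grading of Section~\ref{sect:prelim} gives a Gerstenhaber structure (bracket of degree $-1$) on $V$. Likewise each connected component of $C_{0,n}$ is an open simplex whose Fulton--MacPherson closure is the Stasheff associahedron $K_n$; there are $n!$ of them, permuted simply transitively by $S_n$, so $H_{\bul}(\bC_{0,n})=\bbK[S_n]$ is concentrated in degree $0$ and the sub‑operad it generates on the open color is $\ass$, giving an associative product on $A$. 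Finally $\bC_{1,0}$ is a point, whose fundamental class is the degree‑$0$ unary operation $\vr\colon V\to A$.

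The crucial step is the homotopy type of the spaces $\bC_{k,n}$ for $n\ge1$. For this I would use the Fadell--Neuwirth fibration $C_{k,n}\to C_{0,n}$ obtained by forgetting the $k$ points of the open upper half plane $\bH$. Its base is the discrete set of $n!$ orderings of the boundary points, and its fiber over a chosen ordering is $\Conf_k(\bH)$ if $n\ge2$ and $\Conf_k(\bH)/\bbR_{>0}$ if $n=1$; in either case it is $S_k$‑equivariantly homotopy equivalent to $\Conf_k(\bbR^2)\simeq\bC_k$. Since a Fulton--MacPherson compactification has the homotopy type of its interior, this gives an $S_k\times S_n$‑equivariant equivalence $\bC_{k,n}\simeq\coprod_{n!}\bC_k$ and hence
\[
H_{\bul}(\SC)^{\ma}(k,n)=H_{\bul}(\bC_{k,n})\cong\Ger(k)\otimes\bbK[S_n]\qquad(k\ge1),
\]
with $H_{\bul}(\bC_{0,n})=\bbK[S_n]=\ass(n)$; the case $n=0$ is consistent since $\bC_{k,0}\simeq\bC_k$. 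On the algebraic side, using associativity of the product on $A$ and the centrality relation $\vr(v)\,a=(-1)^{|v||a|}\,a\,\vr(v)$, every operation $V^{\otimes k}\otimes A^{\otimes n}\to A$ in $\cP$ collapses to $\vr\big(\mu(v_1,\dots,v_k)\big)\cdot a_{\si(1)}\cdots a_{\si(n)}$ with $\mu\in\Ger(k)$, $\si\in S_n$, so $\cP^{\ma}(k,n)$ is spanned by these $n!\dim\Ger(k)$ monomials.

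It remains to pin down the two relations and to conclude. That $\vr$ is a morphism of associative algebras follows because $\bC_{2,0}$ is connected ($\simeq S^1$): the boundary face on which the two closed points first collide and are then absorbed carries the class $\vr(v_1v_2)$, the face on which the two closed points descend independently to $\bbR$, get absorbed by $\vr$, and are then multiplied in $A$ carries $\vr(v_1)\vr(v_2)$, and the two classes coincide in $H_0(\bC_{2,0})$. Centrality comes from $\bC_{1,1}$ being an arc: its two endpoints are the configurations with the closed point to the left, resp.\ to the right, of the open point, i.e.\ the operations $(v,a)\mapsto\vr(v)\,a$ and $(v,a)\mapsto a\,\vr(v)$; joined by the arc they are homologous, and rewriting the second through the product on $A$ and the transposition of the two inputs produces exactly the Koszul sign $(-1)^{|v||a|}$. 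Hence there is a well‑defined operad map $\cP\to H_{\bul}(\SC)$; it is surjective because the equivalence $\bC_{k,n}\simeq\coprod_{n!}\bC_k$ is realized inside $\SC$ by clustering the $k$ closed points above a point of $\bC_{0,n}$ (an operadic composition of $\vr$, the product on $A$ and a Gerstenhaber operation), and then the dimension count of the previous paragraph forces it to be an isomorphism in each arity. Therefore $\cP\cong H_{\bul}(\SC)$, which is the assertion.

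The step I expect to be the main obstacle is the middle one: making the Fadell--Neuwirth and clustering arguments precise and $S_k\times S_n$‑equivariant, controlling carefully the passage between the open configuration spaces and their Fulton--MacPherson compactifications, and above all checking that the clustering homotopy equivalences are compatible with the operadic composition maps of $\SC$, so that the final identification is one of operads rather than merely of the underlying collections of graded $S_k\times S_n$‑modules.
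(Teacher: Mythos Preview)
Your proposal is correct and follows essentially the same route as the paper's sketch, which reduces the problem to computing $H_{\bul}(C_k)$ and $H_{\bul}(C_{k,n})$ (deferred there to \cite{Cohen} and \cite{GJ}) via the collar neighborhood theorem and then ``assembling'' the operad; your Fadell--Neuwirth argument for $\bC_{k,n}\simeq\coprod_{n!}\bC_k$ and the boundary--face analysis of $\bC_{2,0}$ and $\bC_{1,1}$ to verify the homomorphism and centrality relations are exactly the kind of details that the paper leaves to those references and to the reader. The concern you flag at the end---making the clustering equivalence equivariant and compatible with the operadic compositions---is indeed the only place requiring care, but it is standard and does not affect the validity of the strategy.
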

\begin{proof}
Although a proof is given in \cite{Sasha-SC} we would like to briefly 
explain how one can prove this statement without using the original 
(non-Fulton-MacPherson) version of the Swiss Cheese operad. 

Spaces of the operad $\SC$ are the compatifications 
$\bC_{k}$ and $\bC_{k,n}$ described above.  
The top dimensional stratum of $\bC_k$ (resp. $\bC_{k,n}$)
is the configuration space $C_k$ (resp. $C_{k,n}$). Thus, by the 
collar neighborhood theorem, one can construct a homotopy 
equivalence between $\bC_k$  (resp. $\bC_{k,n}$) and 
$C_k$ (resp. $C_{k,n}$). Hence 
\begin{equation}
\label{homology-bC-C}
H_{\bul}(\bC_k) \cong H_{\bul}(C_k)\,, \qquad 
H_{\bul}(\bC_{k,n}) \cong H_{\bul}(C_{k,n})\,.
\end{equation}
The homology of the spaces $C_k$ and $C_{k,n}$ 
can be computed along the lines of \cite{Cohen} or 
\cite{GJ}. Finally, using the homotopy equivalence
between  $\bC_k$  (resp. $\bC_{k,n}$) and 
$C_k$ (resp. $C_{k,n}$) it is not hard to see how 
the desired operad assembles from the
homology spaces $H_{\bul}(C_{k})$ and $H_{\bul}(C_{k,n})$\,.  
\end{proof}
\begin{remark}
\label{rem:cohomology-bC-C}
Using the collar neighborhood theorem again 
(or applying the universal coefficient theorem) we 
get the similar description for the spaces of 
the linear dual cooperad  $H^{\bul}(\SC)$:  
\begin{equation}
\label{cohomology-bC-C}
H^{\bul}(\bC_k) \cong H^{\bul}(C_k) \qquad 
H^{\bul}(\bC_{k,n}) \cong H^{\bul}(C_{k,n})\,.
\end{equation}
\end{remark}

The cooperad $H^{\bul}(\SC)$ is closely 
related to the cooperad $\bS$ (see Definition \ref{dfn:S-coalg}). 
To describe a link between these two cooperads  
we introduce the following modification 
of $H^{\bul}(\SC)$-coalgebras:
\begin{defi}
\label{dfn:sc-coalg}
An $\sc$-coalgebra is a pair of $\Ger^{\vee}$-coalgebra
$V$ and a $\La\coass$-coalgebra $A$ together with a unary
operation
$$
\rho : A \to V
$$
of degree\footnote{Recall that by degree of $\rho$ we mean the degree 
of the corresponding vector in the cooperad governing $\sc$-coalgebras.}  
$-1$ such that the following diagrams commute:
\begin{equation}
\label{rho-map-sc-coalg}
\xymatrix@M=0.4pc{
A\ar[r]^{\rho} \ar[d]_{\D_{\ma}}  &  V \ar[d]^{\D_{\mc}}  \\
A\otimes A \ar[r]^{\rho \otimes \rho} & V \otimes V
}
\end{equation}
\begin{equation}
\label{sc-left-right-via-rho}
\xymatrix@M=0.4pc{
~ & A \otimes A \ar[r]^{\rho \otimes \id} & V\otimes A\\
A \ar[ur]^{\D_{\ma}} \ar[dr]_{\D_{\ma}} & ~ & ~ \\
~ & A \otimes A \ar[r]^{\id \otimes \rho} & A\otimes V \ar[uu]^{\si}
}
\end{equation}
where $\D_{\mc}$ (resp. $\D_{\ma}$) is the comultiplication in $V$
(resp. in $A$) and
$$
\si: A \otimes V \to V \otimes A
$$
is the mapping which switches
tensor components. 
We will denote by $\sc$ the $2$-colored cooperad which
governs $\sc$-coalgebras. 
\end{defi}
\begin{remark}
\label{rem:sc-versus-H-SC}
It is clear that the cooperad $\sc$ is obtained 
from  $H^{\bul}(\SC)$ by shifting the degrees of the 
cooperations $\D_{\mc}$, $\de_{\mc}$, 
$\rho$, $\D_{\ma}$ by 
$-2$, $-2$, $-1$, and $-1$, respectively. 
In other words, we have
\begin{equation}
\label{eq:sc-versus-H-SC}
\sc^{\mc}(k,0) = \bs^{2-2k} H^{\bul}(\SC)^{\mc}(k,0)\,, 
\qquad 
\sc^{\ma}(k,n) = \bs^{1-n-2k} \, (1\otimes \sgn_n) \otimes  H^{\bul}(\SC)^{\ma}(k,n)\,, 
 \end{equation}
where $1 \otimes \sgn_n$ is the product of the trivial representation 
of $S_k$ and the sign representation of $S_n$\,.
\end{remark}
It is not hard to see that 
an $\bS$-coalgebra $(V,A)$ is an $\sc$-coalgebra
with an additional condition:
$$
\de_{\mc} \circ \rho = 0\,,
$$
where $\de_{\mc}$ denotes the Lie cobracket on $V$\,.
Thus the cooperad $\bS$ is a sub-cooperad of $\sc$
\begin{equation}
\label{bS-in-sc}
\bS \hookrightarrow \sc\,.
\end{equation}

\subsection{The first sheet $E^1(\SC)$ of the spectral sequence for $\SC$}
Using the stratification of the spaces 
(\ref{SC-mc}), (\ref{SC-ma}) described in Subsection \ref{subsect:SC}
we equip the topological operad $\SC$ with the increasing filtration
\begin{equation}
\label{SC-filtr}
\dots \subset F^p\,\SC \subset F^{p+1}\,\SC \subset \dots
\end{equation}
where $F^p\,\SC$ is the closure of the union of all strata
of dimension $p$\,.

In \cite{Sasha-SC} A. Voronov considered the first sheet 
$E^1(\SC)$ of the spectral sequence corresponding to 
this filtration. He referred to the DG operad $E^1(\SC)$
as the operad governing ``homotopy Swiss Cheese algebras''.
To build a link between the DG operads
$\Ger^+_{\infty}$ and $E^1(\SC)$ we would like 
to show that
\begin{teo}
\label{thm:E-1-SC}
The DG operads $E^1(\SC)$ and $\Cobar(\sc)$
are isomorphic.
\end{teo}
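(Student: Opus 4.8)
The plan is to run the standard analysis of the homology spectral sequence attached to a stratified Fulton--MacPherson space, adapted to the two colors, and to check at each step that the bookkeeping matches the definition of $\sc$. The key combinatorial fact we will exploit is that the strata of $\bC_k$ and $\bC_{k,n}$ are indexed by (two-colored) planar trees, and that the codimension of a stratum $C_T$ equals the number of internal edges of $T$; this is exactly the shape of the tree-description of $\Cobar(\sc)$.

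First I would unwind the $E^1$ term of the filtration (\ref{SC-filtr}). By (\ref{E1}) it is the relative homology of two consecutive skeleta, and since the top-dimensional open part of each skeleton is the disjoint union of the strata $C_T$ of that dimension — with $T$ running over $k$-labeled planar trees in the $\mc$-case and over $(k,n)$-labeled two-colored planar trees obeying Condition \ref{cond:ma-ma} and the amendment of Remark \ref{rem:amendment} in the $\ma$-case — the relevant subquotient is the wedge, over such $T$, of the Thom-type spaces $\bC_T/\partial\bC_T$. Using the product decompositions (\ref{C-T}), (\ref{C-T-ma}), the product rule $\partial(X\times Y)=(\partial X\times Y)\cup(X\times\partial Y)$ for manifolds with corners, Lefschetz duality, and the collar-neighborhood homotopy equivalences $\bC_k\simeq C_k$, $\bC_{k,n}\simeq C_{k,n}$ of Remark \ref{rem:cohomology-bC-C}, the homology of $\bC_T/\partial\bC_T$ identifies with the tensor product, over the vertices $v$ of $T$, of suitably shifted cohomologies $H^{\bul}(C_{k(v)})$ and $H^{\bul}(C_{k(v),n(v)})$. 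Assembling these over all trees then reproduces, as a graded collection, the free operad generated by the coaugmentation coideal of $\sc$ with a desuspension attached to each internal edge — that is, the underlying collection of $\Cobar(\sc)$ — and the point of Remark \ref{rem:sc-versus-H-SC} is precisely that the shifts $\bs^{2-2k}$ and $\bs^{1-n-2k}\otimes(1\otimes\sgn_n)$ are exactly what makes this identification degree-preserving. As a sanity check, restricting to the single color $\mc$ recovers the classical identification of $E^1$ for the Fulton--MacPherson little discs operad with $\Ger_{\infty}=\Cobar(\Ger^{\vee})$.

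Next I would match the differentials. The $d^1$ differential (\ref{d1}) is the connecting homomorphism of the triple of three consecutive skeleta; geometrically it records, for each codimension-one face of $\partial\bC_T$, the tree $T'$ obtained from $T$ by blowing up one vertex into two vertices joined by a new internal edge, together with the induced partition of that vertex's inputs. Dually, this is exactly the cobar differential of $\sc$, which at each vertex applies the cocompositions of $\sc$ inherited from the cooperad $H^{\bul}(\SC)$; the admissible vertex blow-ups of a two-colored planar tree are precisely those allowed by Condition \ref{cond:ma-ma} and the order-compatibility of colors, so the cocompositions of $\sc$ produce all boundary faces and no spurious ones. The step I expect to be the main obstacle is the sign bookkeeping: one must fix once and for all coherent orientations of the configuration spaces $C_k$ and $C_{k,n}$, and then verify that the orientations induced on the product strata $C_T$ and on their codimension-one faces obey the same rule as the Koszul signs built into the cobar differential of $\sc$ (and, below, the operadic structure maps). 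This is routine but delicate and is handled exactly as in the classical $\Ger_{\infty}$ case.

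Finally I would check compatibility with the operad structure. The skeletal filtration is multiplicative for the operadic insertions of $\SC$: an insertion is realized by concatenation of trees and lands in a boundary stratum, the filtration degrees adding, so $E^1(\SC)$ is a DG operad. Under the identification of the previous paragraphs, its structure maps become grafting of decorated trees, which is precisely the operad structure on $\Cobar(\sc)$. Combining the three steps, the isomorphism of graded collections upgrades to an isomorphism of DG operads $E^1(\SC)\cong\Cobar(\sc)$, as claimed; the only genuinely technical ingredient, as noted, is the orientation and sign analysis, everything else being the standard Fulton--MacPherson spectral sequence argument carried out in the two-colored setting.
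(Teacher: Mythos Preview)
Your proposal is correct and follows essentially the same route as the paper: Poincar\'e--Lefschetz duality plus the collar neighborhood theorem to identify $E^1_{p,q}$ with a sum over trees of tensor products of $H^{\bul}(C_{k(v)})$ and $H^{\bul}(C_{k(v),n(v)})$, then the degree count via Remark~\ref{rem:sc-versus-H-SC}, and finally the identification of $d_1$ with the cobar differential through the boundary combinatorics of the strata. The paper organizes the duality step globally on $F^{-p}\bC_{k,n}$ rather than stratum-by-stratum and packages the $d_1$ argument into an explicit commutative diagram with cap products, but the content is the same; your explicit flagging of the sign and orientation bookkeeping is, if anything, more honest than the paper's treatment.
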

\begin{proof}
The spaces $\bC_k$ form a Fulton-MacPherson version $\FME_2$
of the little disc operad. So the operad  $\SC$ is a $2$-colored 
extension of the ordinary (non-colored) operad $\FME_2$\,. 

Thus if we restrict our consideration to operations 
of  $E^1(\SC)$  of the arity
$$
(\mc,\mc, \dots, \mc ~ \mapsto ~ \mc )
$$
then we get an ordinary (non-colored) DG operad 
\begin{equation}
\label{ccc-c}
E^1(\FME_2)\,.
\end{equation}

In the famous Getzler-Jones paper\footnote{See also 
Proposition 1.2 in \cite{Sasha}.}  \cite{GJ} the DG operad 
(\ref{ccc-c}) was identified with the cobar construction 
$\Cobar(\Ger^{\vee})$ of the cooperad $\Ger^{\vee}$\,.

Thus, to prove the theorem, it remains to extend this 
identification to the operations of the arity 
$$
(\underbrace{\mc, \dots, \mc}_{k}, 
\underbrace{\ma, \dots, \ma}_{n} ~ \mapsto ~ \ma )
$$

For $E^1_{\bul, \bul}$ of $\bC_{k,n}$ we have\footnote{Recall that 
we use the reversed grading on all homological complexes.}
\begin{equation}
\label{E-1-bC}
E^1_{p,q}(\bC_{k,n}) =
H_{p + q}(F^{-p}\,\bC_{k,n}, F^{-p-1}\, \bC_{k,n} )\,.
\end{equation}

$F^{-p-1}\bC_{k,n}$ is the boundary of the compact manifold 
$F^{-p}\bC_{k,n}$\,. Hence by the Poincar\'e-Lefschetz duality we have
\begin{equation}
\label{Poincare}
H_{p+q}(F^{-p}\,\bC_{k,n}, F^{-p-1}\,\bC_{k,n} )
\cong H^q (F^{-p}\, \bC_{k,n} )\,.
\end{equation}

Next, the collar neighborhood theorem implies that 
the space $F^{-p}\, \bC_{k,n} $ is homotopy equivalent
to its interior  
\begin{equation}
\label{F-p-interior}
F^{-p}\,\bC_{k,n} \, \setminus \, F^{-p-1}\, \bC_{k,n}\,.
\end{equation}
Therefore, we have the isomorphism 
\begin{equation}
\label{collar}
H^q (F^{-p}\, \bC_{k,n} ) \cong
H^q (F^{-p}\,\bC_{k,n} \, \setminus \, F^{-p-1}\, \bC_{k,n})\,.
\end{equation}

The space (\ref{F-p-interior})
is the disjoint union of strata of dimension $-p$\,.
Hence, using (\ref{C-T-ma}) and the K\"unneth formula  
we get 
$$
H_{p+q}(F^{-p}\,\bC_{k,n}, F^{-p-1}\,\bC_{k,n}  )
\cong 
$$
\begin{equation}
\label{unfolding}
\bigoplus_T
\left( 
\bigotimes_{v \in V_{\mc} (T)\setminus \{{\rm leaves}\}} H^{\bul}(C_{k(v)}) 
~~\otimes ~~
\bigotimes_{v\in V_{\ma}(T)\setminus \{{\rm leaves}\}} H^{\bul} (C_{k(v), n(v)})
\right)^q \Big/ ~\sim~
\end{equation}
where  the summation runs over all $(k,n)$-labeled $2$-colored planar trees $T$ and
$k(v)$ (resp. $n(v)$) denotes the number of incoming edges of 
the vertex $v$ with the color $\mc$
(resp. with the color $\ma$). In the right hand side we identify two vectors
if one of them is obtained from another by  
changing the total order on the set of the incoming edges of a vertex 
$v \in V_{\mc}(T)\setminus \{{\rm leaves}\}$ 
(resp. $v \in V_{\ma}(T)\setminus \{{\rm leaves}\}$)
and applying the corresponding 
element of the symmetric group to the component in $H^{\bul}(C_{k(v)})$ 
(resp.  $H^{\bul}(C_{k(v), n(v)})$).

Since the dimension of the stratum $C_T$ corresponding to the tree $T$
is 
$$
-p = dim (C_T) = \sum_{v \in V_{\mc}(T)} (2 k_v-3) + \sum_{v \in V_{\ma}(T)} (2k_v + n_v -2)  
$$
we get that the total degree of a vector in (\ref{unfolding}) reads: 
\begin{equation}
\label{total-deg}
p+q =  q  +  \sum_{v\in V_{\mc}(T)} (2 - 2 k_v) + \sum_{v \in V_{\ma}(T)} (1-2k_v - n_v)
+ |V_{\mc}(T)| +   |V_{\ma}(T)|\,.
\end{equation} 
Combining this computation with (\ref{cohomology-bC-C}) and Remark  
 \ref{rem:sc-versus-H-SC} we get the desired identification 
between $E^1(\SC)$ and $\Cobar(\sc)$ as operads of graded vector spaces. 

To show that the differential $d_1$ in $E^1(\SC)$ coincides with the 
differential $\pa^{\Cobar}$ of the cobar construction 
we need to prove commutativity of the following diagram
\begin{equation}
\label{diag-d-1-and-cobar}
\xymatrix@M=0.15pc{
H^q(F^{-p}\bC_{k,n} \setminus F^{-p-1}\bC_{k,n}) \ar[d]^{\pa^{\Cobar}} & 
H^q(F^{-p}\bC_{k,n})\ar[l] \ar[d]^{j^*}\ar[r] &  
H_{p+q}(F^{-p}\bC_{k,n}, F^{-p-1}\bC_{k,n})\ar[d]^{d_1}\\
H^q(F^{-p-1} \bC_{k,n} \setminus F^{-p-2} \bC_{k,n})  &  H^q(F^{-p-1}\bC_{k,n})\ar[l] \ar[r]  &  
H_{p+q+1}(F^{-p-1}\bC_{k,n}, F^{-p-2}\bC_{k,n}) 
}
\end{equation}   
Here $j$ is the embedding $j: F^{-p-1}\bC_{k,n} \hookrightarrow F^{-p}\bC_{k,n} $, the horizontal 
arrows in the right hand side are the cap-products with $[F^{-p}\bC_{k,n}]$ and  $[F^{-p-1}\bC_{k,n}]$, 
respectively. Finally, the horizontal arrows in the left hand side are isomorphisms 
built using the collar neighborhood theorem. 
 
Let us recall that 
the differential $d_1$ in $E^1(\SC)$ is the composition 
of the coboundary operator 
\begin{equation}
\label{coboundary}
H_{p+q} (F^{-p} \bC_{k,n}, F^{-p-1} \bC_{k,n}) \to 
H_{p+ q +1}( F^{-p-1} \bC_{k,n}) 
\end{equation}   
with the natural map 
\begin{equation}
\label{projection}
H_{p+q+1}( F^{-p-1} \bC_{k,n}) \to H_{p+q+1}( F^{-p-1} \bC_{k,n}, F^{-p-2} \bC_{k,n})  
\end{equation}
Thus the commutativity of the right half of the diagram follows 
from the naturality of the cap-product.

To show that the left half of the diagram (\ref{diag-d-1-and-cobar}) commutes
we let $C_T$ be a top dimensional stratum of $F^{-p}\bC_{k,n}$ corresponding 
to a 2-colored tree $T$\,. Then each top dimensional stratum $C_{\hT}$ of the boundary 
$\pa C_T$ corresponds to a 2-colored tree $\hT$ such that $T$ is obtained from
$\hT$ by contracting exactly one edge of $\hT$\,.  
 Using this observation it is not hard to see that the map $j^*$  is 
expressed in terms of cooperadic coinsertions for 
$H^{\bul}(\SC)$\,.   
 
Theorem \ref{thm:E-1-SC} is proved. 
 
\end{proof}

The embedding of cooperads (\ref{bS-in-sc}) gives us the 
embedding of the DG operads: 
\begin{equation}
\label{Ger-plus-in-E1}
\Ger^+_{\infty} = \Cobar(\bS) \hookrightarrow
\Cobar(\sc) \cong E^1(\SC)\,.
\end{equation}
Thus $\Ger^+_{\infty}$ is a sub DG operad of $E^1(\SC)$\,. 

Combining  (\ref{Ger-plus-in-E1}) with (\ref{OC-inf-in-Ger-plus})
we get the following pair of embeddings of DG operads: 
\begin{equation}
\label{pair-of-embeddings}
\OC_{\infty}  \hookrightarrow
\Ger^+_{\infty}  \hookrightarrow E^1(\SC)\,.
\end{equation}
The embedding $\OC_{\infty}  \hookrightarrow E^1(\SC)$ was 
described in details in paper \cite{OCHA} by E. Hoefel.

\section{The DG operads $\Ger^+_{\infty}$ and $E^1(\SC)$ are
non-formal}
\label{sect:non-formal}
Let us recall that the DG operad $\Ger^+_{\infty}$ is the cobar construction
$\Cobar(\bS)$
of the cooperad $\bS$\,.   The minimal model
condition (\ref{minimal}) implies that $\Ger^+_{\infty}$
is cofibrant. Hence, if the operad $\Ger^+_{\infty}$ is
formal then there exists a quasi-isomorphism of DG operads
\begin{equation}
\label{does-not-exist}
\vf : \Cobar(\bS) \to H
\end{equation}
where $H=H^{\bul}(\Cobar(\bS))$\,.

Let us consider the element
\begin{equation}
\label{X}
X = \bs\, (\rho \otimes \id) \D_{\ma} = \bs\, (\id \otimes \rho) \D_{\ma}  
\end{equation}
in $\Cobar(\bS)^{\ma}(1,1)$\,.

Since $\rho$ and $\D_{\ma}$ have the degree $-1$, the 
element $X$ has the degree $-1$.    
It is not hard to see that $X$ spans the subspace of
all operations having the arity $((\mc, \ma) \to \ma)$ and
the degree $-1$\,. On the other hand, $X$ 
is not a cocycle:
$$
\pa^{\Cobar} ( X ) 
=   \bs \D_{\ma}~ \circ_1~ \bs \rho  +  \bs \D_{\ma} ~\circ_2~ \bs \rho   \neq 0\,,
$$
where $\circ_1$ (resp. $\circ_2$) denotes the operadic insertion in the 
first slot (resp. the second slot). 

Therefore, in $H$ there is no non-zero vector having the
arity $((\mc, \ma) \to \ma)$ and degree $-1$\,.
Hence, we have
\begin{equation}
\label{X-go-to-zero}
\vf(X) =0\,.
\end{equation}

Let $Z$ be the following element of $\Cobar(\bS)$ 
\begin{equation}
\label{Z}
Z = \bs \, (\rho \otimes \rho) \D_{\ma}\,.
\end{equation}
Due to axioms of the $\bS$-algebra the element $Z$ can 
be also written as
$$
Z = \bs \, \D_{\mc} \rho \,.
$$

Next we compute  $\pa^{\Cobar}(Z)$ 
\begin{equation}
\label{pa-Z}
\pa^{\Cobar}(Z) = X ~\circ_2~ \bs \rho - X ~\circ_1~ \bs \rho 
- Y\,,
\end{equation}
where
\begin{equation}
\label{Y} 
 Y=  \bs \, \rho  ~\circ_1~ \bs\, \D_{\mc}\,.
\end{equation}

The element $Y$ is clearly a non-trivial cocycle in $\Cobar(\bS)$\,.
On the other hand, equations (\ref{X-go-to-zero}) and (\ref{pa-Z}) imply that
\begin{equation}
\label{Y-goes-to-zero}
\vf(Y) = 0\,.
\end{equation}
Using this contradiction we deduce that
the DG operad $\Ger^+_{\infty}$ is non-formal.
In other words
\begin{teo}
\label{thm:Ger-plus-non-formal}
It is not possible to construct a sequence of
quasi-isomorphisms of DG operads which connects
$\Ger^+_{\infty}$ to its cohomology.
\end{teo}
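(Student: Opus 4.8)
The plan is to argue by contradiction, exploiting the cofibrancy of $\Ger^+_{\infty}=\Cobar(\bS)$ provided by the minimal model condition (\ref{minimal}) (cf. Remark \ref{rem:cofibrant1}). If $\Ger^+_{\infty}$ were formal, then, since it is cofibrant and its cohomology $H=H^{\bul}(\Cobar(\bS))$ carries the zero differential, the zig-zag of quasi-isomorphisms joining $\Ger^+_{\infty}$ to $H$ can be straightened into a single quasi-isomorphism of DG operads
\[
\vf:\Cobar(\bS)\longrightarrow H\,.
\]
Then it suffices to produce a cocycle $Y\in\Cobar(\bS)$ which is \emph{not} a coboundary but which satisfies $\vf(Y)=0$; this contradicts the fact that $\vf$ induces an isomorphism on cohomology.

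First I would isolate a corner of $\Cobar(\bS)$ where the cohomology is forced to vanish. The space of operations of $\Cobar(\bS)$ of arity $((\mc,\ma)\to\ma)$ and degree $-1$ is one-dimensional, spanned by
\[
X=\bs\,(\rho\otimes\id)\,\D_{\ma}
\]
(which equals $\bs\,(\id\otimes\rho)\,\D_{\ma}$ by axiom (\ref{left-right-via-rho})), and $X$ fails to be a cocycle: a direct computation of the cobar differential yields
\[
\pa^{\Cobar}(X)=\bs\D_{\ma}\circ_{1}\bs\rho+\bs\D_{\ma}\circ_{2}\bs\rho\neq 0\,.
\]
Hence $H$ contains no nonzero vector of this arity and degree, and therefore $\vf(X)=0$.

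The key step is a transgression relating $X$ to a genuine cohomology class. Consider
\[
Z=\bs\,(\rho\otimes\rho)\,\D_{\ma}=\bs\,\D_{\mc}\,\rho
\]
in $\Cobar(\bS)$ (the two expressions agree by diagram (\ref{rho-map-coalg})). Computing its cobar differential gives
\[
\pa^{\Cobar}(Z)=X\circ_{2}\bs\rho-X\circ_{1}\bs\rho-Y\,,\qquad Y=\bs\,\rho\circ_{1}\bs\,\D_{\mc}\,,
\]
where $Y$ is visibly a cocycle --- both $\rho$ and $\D_{\mc}$ are indecomposable cooperations of $\bS$ --- and represents a nonzero class in $H$. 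Applying $\vf$, and using that $\vf$ is compatible with operadic insertions while $H$ has the zero differential, we obtain
\[
0=\vf\big(\pa^{\Cobar}(Z)\big)=\vf(X)\circ_{2}\vf(\bs\rho)-\vf(X)\circ_{1}\vf(\bs\rho)-\vf(Y)=-\,\vf(Y)\,,
\]
since $\vf(X)=0$. Thus $\vf(Y)=0$, while $[Y]\neq 0$ in $H^{\bul}(\Cobar(\bS))$; this contradiction establishes Theorem \ref{thm:Ger-plus-non-formal}.

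The main obstacle in filling in the details is the bookkeeping behind the two ``visible'' facts: that the arity $((\mc,\ma)\to\ma)$, degree $-1$ component of $\Cobar(\bS)$ is exactly $\bbK\,X$ with $X$ non-closed, and that $Y$ determines a non-trivial class in $H^{\bul}(\Cobar(\bS))$. Both are finite verifications once one lists the relevant low-weight generators of the cooperad $\bS$ (organized by the weights (\ref{weights})) and spells out the cobar differential; no new idea is needed beyond the choice of the transgressing element $Z$.
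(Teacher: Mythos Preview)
Your proof is correct and follows the paper's argument essentially verbatim: the same elements $X$, $Z$, $Y$ are introduced, the same computations $\pa^{\Cobar}(X)\neq 0$ and $\pa^{\Cobar}(Z)=X\circ_2\bs\rho-X\circ_1\bs\rho-Y$ are carried out, and the contradiction $\vf(Y)=0$ versus $[Y]\neq 0$ is derived in the same way. Your only addition is making explicit the straightening of the zig-zag via cofibrancy and spelling out the application of $\vf$ to $\pa^{\Cobar}(Z)$, which the paper leaves implicit.
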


Using the identical line of arguments it is
easy to prove that
\begin{teo}
\label{thm:E-1-SC-non-formal}
It is not possible to construct a sequence of
quasi-isomorphisms of DG operads which connects
$E^1(\SC)$ to its cohomology.
\end{teo}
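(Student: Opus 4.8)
The plan is to repeat, essentially word for word, the argument used to establish Theorem \ref{thm:Ger-plus-non-formal}, now carried out inside the DG operad $\Cobar(\sc)\cong E^1(\SC)$ (Theorem \ref{thm:E-1-SC}) rather than inside $\Cobar(\bS)$. The first point to record is that $E^1(\SC)$ is cofibrant. The weights (\ref{weights}) are assigned to the cooperations $\rho$, $\D_{\ma}$, $\D_{\mc}$, $\de_{\mc}$, all four of which are cooperations of $\sc$, and every defining relation of $\sc$ is also a defining relation of $\bS$ (in passing from $\bS$ to $\sc$ one merely drops the relation $\de_{\mc}\circ\rho=0$). Hence all relations of $\sc$ are homogeneous for the weights (\ref{weights}) and produce an exhaustive increasing filtration $F^{\bullet}\sc$ satisfying the minimal model condition (\ref{minimal}); therefore $\Cobar(\sc)\cong E^1(\SC)$ is cofibrant. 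Consequently, if $E^1(\SC)$ were formal there would exist a quasi-isomorphism of DG operads $\vf:\Cobar(\sc)\to H$, where $H=H^{\bul}(\Cobar(\sc))$.

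Next I would transport the elements $X$, $Y$, $Z$ of (\ref{X}), (\ref{Y}), (\ref{Z}) into $\Cobar(\sc)$ along the embedding (\ref{Ger-plus-in-E1}). All three are built only from the cooperations $\rho$, $\D_{\ma}$, $\D_{\mc}$, which $\bS$ and $\sc$ share, and the identities $\pa^{\Cobar}(X)=\bs\D_{\ma}\circ_1\bs\rho+\bs\D_{\ma}\circ_2\bs\rho$ and (\ref{pa-Z}) only invoke the cooperadic coinsertions of those three cooperations; since the relation $\de_{\mc}\circ\rho=0$ that distinguishes $\bS$ from $\sc$ never enters the coproduct of $\rho$, $\D_{\ma}$ or $\D_{\mc}$, these formulas hold verbatim in $\Cobar(\sc)$. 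Here one has to check slightly more than in the $\bS$-case: that $X$ still spans the \emph{entire} space of operations of $\Cobar(\sc)$ of arity $((\mc,\ma)\to\ma)$, and that $Y$ is still not a coboundary. For the former, a tree built from $\sc$-cooperations with one $\mc$-leaf, one $\ma$-leaf and $\ma$-root is forced to be $\D_{\ma}$ with $\rho$ grafted onto its $\mc$-branch, because no cooperation of $\sc$ turns a pair of colors $(\mc,\ma)$ into a single $\mc$ and the new cooperation $\de_{\mc}\circ\rho$ of $\sc$ has both of its output colors equal to $\mc$. For the latter, the only one-vertex elements of $\Cobar(\sc)$ of arity $((\mc,\mc)\to\ma)$ are $\bs\,\D_{\mc}\circ\rho$, whose $\pa^{\Cobar}$ is the right-hand side of (\ref{pa-Z}) and hence not a multiple of $Y$, and $\bs\,\de_{\mc}\circ\rho$, whose $\pa^{\Cobar}$ involves $\de_{\mc}$; so nothing maps to $Y$, and $Y$ remains a non-trivial cocycle in $\Cobar(\sc)$. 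Since $X$ is not a cocycle, $H$ has no nonzero operation of arity $((\mc,\ma)\to\ma)$, and therefore $\vf(X)=0$.

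Finally I would conclude exactly as in the proof of Theorem \ref{thm:Ger-plus-non-formal}: applying the operad morphism $\vf$ to $\pa^{\Cobar}(Z)=X\circ_2\bs\rho-X\circ_1\bs\rho-Y$ and using that $H$ carries the zero differential gives $0=\vf(X)\circ_2\vf(\bs\rho)-\vf(X)\circ_1\vf(\bs\rho)-\vf(Y)=-\vf(Y)$, so $\vf(Y)=0$; but $Y$ is a non-trivial cocycle, so $\vf$ cannot induce an isomorphism on cohomology, a contradiction. Hence $E^1(\SC)$ is non-formal. The only step I expect to need more than a one-line remark is the second paragraph — checking that enlarging $\bS$ to $\sc$ neither alters $\pa^{\Cobar}$ on $X$, $Y$, $Z$ nor creates a new operation of arity $((\mc,\ma)\to\ma)$ — and even this is immediate once one observes that the only cooperation of $\sc$ not already present in $\bS$, namely $\de_{\mc}\circ\rho$, outputs two vectors of color $\mc$ and so is invisible to the three relevant arities.
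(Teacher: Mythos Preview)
Your approach is exactly the paper's: it proves Theorem \ref{thm:E-1-SC-non-formal} with the single sentence ``Using the identical line of arguments it is easy to prove that\ldots'', so your explicit rerun of the $X,Y,Z$ argument inside $\Cobar(\sc)$ is precisely what is intended, and the extra care you take with cofibrancy and with checking that the passage from $\bS$ to $\sc$ does not disturb $\pa^{\Cobar}X$, $\pa^{\Cobar}Z$, or the nontriviality of $Y$ is appropriate.

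One point needs correcting. You assert that $X$ spans the \emph{entire} space $\Cobar(\sc)^{\ma}(1,1)$ and hence that $H$ has no nonzero operation of arity $((\mc,\ma)\to\ma)$. This is false: the two-vertex elements $\bs\D_{\ma}\circ_i\bs\rho$ lie in $\Cobar(\sc)^{\ma}(1,1)$ in degree $0$, and indeed $H^{\ma}(1,1)$ is nonzero in degree $0$. What you actually need (and what the paper states for $\bS$) is only that $X$ spans the \emph{degree $-1$} part of $\Cobar(\sc)^{\ma}(1,1)$; since $\sc^{\ma}(1,1)\cong\bs^{-2}H^{\bul}(C_{1,1})$ is one-dimensional in degree $-2$ and there are no multi-vertex trees of that arity in degree $-1$, this holds, and together with $\pa^{\Cobar}X\neq 0$ it gives $H^{\ma}(1,1)^{-1}=0$, whence $\vf(X)=0$. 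Your tree description of $X$ is also slightly garbled ($\D_{\ma}$ has two $\ma$-outputs, not a ``$\mc$-branch''): $X$ is the single-vertex element $\bs\big((\rho\otimes\id)\D_{\ma}\big)$, not a two-vertex grafting. Similarly, your verification that $Y$ is not a coboundary examines only one-vertex elements; you should note (it is an easy check) that $Z$ is the unique degree $-2$ element of $\Cobar(\sc)^{\ma}(2,0)$, so there are no multi-vertex contributions to worry about. With these adjustments the proof goes through.
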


\section{Discussion}
It is known \cite{Kevin}, \cite{Mnev} that path integrals can
be used to transfer homotopy algebraic structures.
Following this line, the computations of A.S. Cattaneo and G. Felder in 
\cite{CF} can be considered as an example of the transfer of the tautological OCHA
on $(\Cbu(A,A), A)$ from Example \ref{OCHA-Cbu-A}
to its cohomology $(HH^{\bul}(A,A), A)$ for $A= \bbR[x^1, \dots, x^d]$\,.   
  
It would be interesting to check whether the Cattaneo-Felder construction
\cite{CF} can be extended further to provide us with a transfer of the
$\Ger^+_{\infty}$-algebra structure on $(\Cbu(A,A), A)$
to the cohomology. If this is the case then Kontsevich's 
formality quasi-isomorphism \cite{K} is 
homotopic\footnote{As far as the author knows, T. Willwacher \cite{Thomas} 
proved that Kontsevich's formality quasi-isomorphism is indeed 
homotopic to the one obtained via Tamarkin's procedure. } 
to 
the one obtained via Tamarkin's procedure \cite{DTT}, \cite{Hinich-pro-Dimu}, 
\cite{Dima-Proof}.

It would be also interesting to answer these questions:

\begin{itemize}

\item What is the role of the full operad $E^1(\SC)$ in
the construction of formality quasi-isomorphisms?

\item Is there a relation between the $\Ger^{+}_{\infty}$-algebra
structure on $(\Cbu(A,A), A)$ introduced in Theorem \ref{Ger-plus-on-C-A}
and the solution of the Swiss Cheese conjecture proposed 
in \cite{SwissCheese}?

\end{itemize}

As for the first question, 
it is natural to guess that $E^1(\SC)$-algebra structure 
on a pair $(V,A)$ is the following data: 
\begin{enumerate}

\item A $\Ger_{\infty}$-structure on $V$\,,

\item an $A_{\infty}$-structure on $A$\,, and 

\item a $\Ger_{\infty}$-morphism from $V$ to $\Cbu(A,A)$\,,

\end{enumerate}
where $\Cbu(A,A)$ carries Tamarkin's $\Ger_{\infty}$-structure. 

This naive guess turns out to be wrong. However, by correcting 
the differential $d_1$ on $E^1(\SC)$ one 
can probably get algebraic structures described 
by the above data. Unfortunately, the author does not 
know any topological meaning of such corrections.

We suspect that the answer to the first question may 
shed some light on the second one.

~\\

\noindent\textsc{Department of Mathematics,
Temple University, \\
1805 N. Broad St.,\\
Philadelphia PA, 19122 USA \\
\emph{E-mail address:} {\bf vald@temple.edu}}

\end{document}